\documentclass[12pt]{amsart}
\usepackage{amsmath}
\usepackage{amssymb}
\usepackage{fancybox}
\usepackage{eucal}
\usepackage{amsthm}
\usepackage{amscd}
\usepackage{hyperref}
\usepackage{wasysym}
\usepackage{url}

\setlength{\textwidth}{6in} \setlength{\textheight}{8in}
\setlength{\oddsidemargin}{0.1in}
\setlength{\evensidemargin}{\oddsidemargin}

\usepackage{amssymb,}
\usepackage[]{amsmath, amsthm, amsfonts,graphicx, amscd,}
\usepackage[all,cmtip]{xy}
\input amssym.def \input amssym

\newtheorem {thm}{Theorem}[section]
\newtheorem{corr}[thm]{Corollary}

\newtheorem{prop}[thm]{Proposition}
\newtheorem{fact}[thm]{Fact}

\newtheorem {lem}[thm]{Lemma}

\theoremstyle{remark}
\newtheorem{rem}[thm]{Remark}
\newtheorem{np*}{Non-Proof}

\theoremstyle{definition}
\newtheorem{defn}[thm]{Definition}

\newtheorem{exam}[thm]{Example}

\def\Ind{\setbox0=\hbox{$x$}\kern\wd0\hbox to 0pt{\hss$\mid$\hss} \lower.9\ht0\hbox to 0pt{\hss$\smile$\hss}\kern\wd0}
\def\Notind{\setbox0=\hbox{$x$}\kern\wd0\hbox to 0pt{\mathchardef \nn=12854\hss$\nn$\kern1.4\wd0\hss}\hbox to 0pt{\hss$\mid$\hss}\lower.9\ht0 \hbox to 0pt{\hss$\smile$\hss}\kern\wd0}

\newcommand{\m}{\mathbb }
\newcommand{\mc}{\mathcal }
\newcommand{\mf}{\mathfrak }

\begin{document}

\title[Uniform bounding in partial differential fields]{Effective uniform bounding in partial \\ differential fields}

\author[James Freitag]{James Freitag*} \thanks{*This material is based upon work supported by the National Science Foundation Mathematical Sciences Postdoctoral Research Fellowship, award number 1204510. It is also based upon work supported by the National Science Foundation under Grant No. 0932078 000 while the authors
were in residence at the Mathematical Sciences Research Institute in Berkeley, California, during the Spring 2014 semester.} 
\author[Omar Le\'on S\'anchez]{Omar Le\'on S\'anchez}  
\address{freitag@math.berkeley.edu \\
Department of Mathematics\\
University of California, Berkeley\\
970 Evans Hall\\
Berkeley, CA 94720-3840 }
\address{oleonsan@math.mcmaster.ca \\
Department of Mathematics and Statistics \\
McMaster University \\
1280 Main St W \\
Hamilton, ON L8S 4L8}

\date{\today \\
2010 {\em Mathematics Subject Classification:} 03C60, 12H05, 12H20. \\
{\em Keywords:} differential fields; prolongation spaces; uniform bounding.}

\begin{abstract}
Motivated by the effective bounds found in \cite{HPnfcp} for ordinary differential equations, we prove an effective version of uniform bounding for fields with several commuting derivations. More precisely,  we provide an upper bound for the size of finite solution sets of partial differential polynomial equations in terms of data explicitly given in the equations and independent of parameters. Our methods also produce an upper bound for the degree of the Zariski closure of solution sets, whether they are finite or not. 
\end{abstract}

\maketitle

\section{Introduction} 

Suppose we are given a system of partial differential polynomial equations over $\m Q$, 
\begin{align*} 
p_1 (x,y) =& \; 0 \\
p_2 (x,y) =& \; 0 \\
\vdots \, & \\
p_r (x,y) =& \; 0 \\
\end{align*} 
so that for some specific values of $y=(y_1,\ldots,y_{s})$ in some differentially closed field $(K,\Delta=\{\delta_1,\ldots,\delta_m\})$ of characteristic zero with commuting derivations, the number of solutions (in the variables $x=(x_1,\ldots,x_{n})$) in $K^n$ is finite. Can one bound the number of solutions in terms of the basic invariants of the differential polynomials $p_i$ without any reference to the selected values of $y$? More generally, without assuming finiteness of the solution set, can one bound the degree of its Zariski closure? In this paper we will answer these questions affirmatively, and give bounds which depend only  on the order, degree, and number of variables in the differential polynomials (and the number of derivations). 

Besides being a problem of foundational interest, this problem is intimately connected to the effective differential Nullstellensatz, and is also at the heart of applications of differential algebra to the so-called special points conjectures in number theory. We will give more details of these connections after discussing some of the history and difficulties of this problem. 

\begin{rem}
For the model theorist, the existence of this type of effective bounds implies, amongst other things, that the theory $\operatorname{DCF}_{0,m}$ has \emph{uniform bounding} which seems to be a new result in the case of fields with several commuting derivations (the effective bounds found in \cite{HPnfcp} imply uniform bounding  for the ordinary case, for a noneffective proof see \cite{MMP}). Consequently, since differentially closed fields are stable and eliminate imaginaries, a result of \cite{Sacks} implies that $\operatorname{DCF}_{0,m}$ has NFCP (the non-finite cover property).
\end{rem}

We now remark on the difficulties that arise (in the case of several commuting derivations) while trying to find effective bounds. The case of a single derivation was considered in \cite{HPnfcp}; let us briefly describe the methods of that paper. Assume $\Delta=\{\delta\}$, and let us consider the case of first-order differential equations. In this case, the problem can be restated as follows: Are there effective upper bounds for the size of finite sets of the form $Z=\{v\in V: (v,\delta(v))\in W\}$ where $V$ and $W$ are algebraic varieties?
\begin{fact}
(In the ordinary case) Let $V$ and $W$ be closed subvarieties of $K^n$ and $K^{2n}$, respectively. If $Z=\{v\in V:(v,\delta(v))\in W\}$ is finite, then $$|Z|\leq (\deg V)^{2^{\dim V}}(\deg W)^{2^{\dim V}-1}.$$
\end{fact}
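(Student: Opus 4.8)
The plan is to induct on $\dim V$. The base case $\dim V=0$ is immediate, since then $|Z|\le\deg V=(\deg V)^{2^0}(\deg W)^{2^0-1}$ (and if $W=\emptyset$ there is nothing to prove, so assume $\deg W\ge1$). For the inductive step I would first reduce to $V$ irreducible: writing $V=\bigcup_jV^{(j)}$ for the irreducible components and $Z^{(j)}=\{v\in V^{(j)}:(v,\delta v)\in W\}$, each $Z^{(j)}$ is finite, $\dim V^{(j)}\le\dim V$, and $\deg V=\sum_j\deg V^{(j)}$; since $a^mb^{m-1}$ is nondecreasing in the integer $m\ge1$ when $a,b\ge1$, and $\sum_ja_j^M\le(\sum_ja_j)^M$, the bound for $V$ follows from the bounds for the $V^{(j)}$.

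So suppose $V$ is irreducible with $d:=\dim V\ge1$, and let $\tau V\subseteq K^{2n}$ be its prolongation — the variety cut out by $f(x)=0$ and $\sum_i(\partial f/\partial x_i)(x)\,y_i+f^{\delta}(x)=0$ for $f\in I(V)$ — so that $(v,\delta v)\in\tau V$ for every $v\in V$ and $\deg\tau V\le(\deg V)^2$. With $\pi\colon K^{2n}\to K^n$ the first projection, set $V_1:=\overline{\pi(W\cap\tau V)}$. Since $\pi(\tau V)\subseteq V$ we have $V_1\subseteq V$, and since $\{(v,\delta v):v\in Z\}\subseteq W\cap\tau V$ we get $Z=\pi(\{(v,\delta v):v\in Z\})\subseteq V_1$; thus $Z=\{v\in V_1:(v,\delta v)\in W\}$. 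The key claim, established below, is that if $Z$ is finite then $V_1\subsetneq V$, so $\dim V_1\le d-1$. Granting it, I would apply the induction hypothesis to $V_1$ (with the same $W$) and use $\deg V_1\le\deg(W\cap\tau V)\le\deg W\cdot\deg\tau V\le(\deg V)^2\deg W$ — here $\deg\overline{\pi(X)}\le\deg X$ for a linear projection and $\deg(X\cap Y)\le\deg X\cdot\deg Y$ is the Bézout inequality — to obtain
\begin{align*}
|Z|\le(\deg V_1)^{2^{d-1}}(\deg W)^{2^{d-1}-1}
&\le\big((\deg V)^2\deg W\big)^{2^{d-1}}(\deg W)^{2^{d-1}-1}\\
&=(\deg V)^{2^d}(\deg W)^{2^d-1},
\end{align*}
which is exactly the asserted bound.

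The hard part is the claim: if $V$ is irreducible with $\dim V\ge1$ and $Z$ is finite, then $\overline{\pi(W\cap\tau V)}\subsetneq V$. I would argue by contradiction. If $\overline{\pi(W\cap\tau V)}=V$ then, $V$ being irreducible and $\pi(W\cap\tau V)$ constructible and dense, some irreducible component $C$ of $W\cap\tau V$ has $\overline{\pi(C)}=V$. Fix a small differential field $k$ over which $V$, $W$, and $C$ are defined, and let $(\xi_1,\xi_2)$ be a generic point of $C$ over $k$; then $\xi_1$ is generic in $V$ over $k$, so $\operatorname{trdeg}_kk(\xi_1)=\dim V\ge1$ and the polynomials over $k$ vanishing at $\xi_1$ are precisely those of $I(V)$. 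Because $(\xi_1,\xi_2)\in\tau V$, the prolongation equations say exactly that the assignment $x_i\mapsto\xi_{2,i}$ extends $\delta|_k$ to a derivation $k[\xi_1]\to k(\xi_1,\xi_2)$; in characteristic zero this in turn extends to a genuine derivation $D$ of $L:=k(\xi_1,\xi_2)$ with $D\xi_1=\xi_2$. Then $\eta:=\xi_1$ is a solution in the differential field $(L,D)\supseteq(k,\delta)$: $\eta\in V(L)$ and $(\eta,D\eta)=(\xi_1,\xi_2)\in C\subseteq W$. Embedding $(L,D)$ into any differentially closed field $K'$ yields $\eta\in Z(K')$; were $Z(K')$ finite it would lie in $\operatorname{acl}(k)=k^{\mathrm{alg}}$, contradicting $\operatorname{trdeg}_kk(\eta)\ge1$, so $Z(K')$ is infinite, and since $\operatorname{DCF}_0$ is complete this forces $Z$ to be infinite in $K$ as well — contrary to hypothesis. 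The one delicate point is that when $C$ does not map finitely onto $V$ one has $\xi_2\notin k(\xi_1)$, so the derivation must first be constructed with values in the module $k(\xi_1,\xi_2)$ and only afterwards extended to an endomorphism of $L$; the other ingredients — the prolongation degree estimate, Bézout, and the behaviour of degree under linear projection — are standard.
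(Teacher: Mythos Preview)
Your overall architecture is exactly the one the paper describes (and attributes to Hrushovski--Pillay): induct on $\dim V$, reduce to $V$ irreducible, project $W$ intersected with the first prolongation down to $V$, show the projection is proper using a generic-point/derivation-extension argument, and iterate. The arithmetic with the exponents, the reduction to the irreducible case, and your proof of the key claim are all correct.

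There is, however, one genuine gap: the assertion $\deg \tau V \le (\deg V)^2$. You state it without proof, but it is not clear (and the paper explicitly flags that it is \emph{not} established) for general, possibly singular, $V$. The equations cutting out $\tau V$ come from generators of $I(V)$, and those need not have degree $\le \deg V$; Heintz's result only gives polynomials of degree $\le \deg V$ whose \emph{radical} is $I(V)$, and taking the radical can raise degrees. The paper's Lemma on hypersurfaces gives $\deg \tau_1 H\le(\deg H)^2$, but for arbitrary $V$ the only general bound produced is of the form $D(n,\deg V)^{2n+1}$, which would wreck your induction.

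The fix is the one the paper uses: work with $B_1(V)$, the Zariski closure of $\{(v,\delta v):v\in V\}$, rather than $\tau V$. Everything you need still holds: $(v,\delta v)\in B_1(V)$ for every $v\in V$, so $Z\subseteq \overline{\pi(W\cap B_1(V))}$; and since $B_1(V)\subseteq \tau V$, your generic-point argument for the key claim goes through verbatim. The crucial gain is the degree estimate $\deg B_1(V)\le(\deg V)^2$, which the paper proves by a nontrivial reduction to the hypersurface case via a generic linear projection (their Lemma~\ref{degreehyp}). With that substitution your proof is complete and coincides with the paper's.
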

The proof of this fact, as it appears in \cite{HPnfcp}, uses in an essential way the variety $B_1(V)$ defined as the Zariski closure of $\{(v,\delta(v))\in K^{2n}:v\in V\}$ equipped with its canonical projection $B_1(V)\to V$. The idea is that $W\cap B_1(V)$ can not project dominantly onto $V$; otherwise, $Z$ would be infinite. One then replaces $V$ with this projection, computes a bound for its degree, and repeats the process. This algorithm yields the desired bound. Note that this procedure translates the differential-algebraic problem into one purely of classical intersection theory, where Bezout's inequality can be used to compute degree bounds. (In Section \ref{prolongingthemagic} we do the degree computations of the relevant algebro-geometric objects in the case of several commuting derivations.)

It is worth mentioning that Hrushovski and Pillay were essentially only interested in the case when the Kolchin closed set $Z$ is finite; in other words, when the Zariski closure $\bar Z$ of $Z$ is zero dimensional. However, as they point out in Remark 3.2 of \cite{HPnfcp}, their algorithm also produces an upper bound for the degree of $\bar Z$, whether $Z$ is finite or not. Moreover, in the case that the dimension of the components of $\bar Z$ are bounded below by some positive integer, the algorithm yields a better bound for the degree of $\bar Z$. We point this out in Remark \ref{secondrem}~(2) (and extend it to the case of several commuting derivations in Corollary \ref{posdim}).

The key ingredient in the above algorithm is the fact that if $W\cap B_1(V)$ projects dominantly, then $Z$ is infinite (we state this formally in Fact \ref{ordi} below). This property, of ordinary differentially closed fields, \emph{does not} have a straightforward generalization to the case of fields with several commuting derivations, see Example \ref{example1} below. Essentially, the complications arise from the integrability conditions imposed by the commutativity of the derivations (we explain this in more detail in Section \ref{bur}). Moreover, a naive generalization of the above algorithm to the partial case (say $\Delta=\{\delta_1,\delta_2\}$) would say that if the set
$$Z_a= \{x\in K: \delta_1 x=x^2 \text{ and } \delta_2 x= x^3+a\}$$
is finite, then $|Z_a|\leq 3$; however, differentiating the above equations yields $x^4-2ax+\delta_1 a=0$ and so for an appropriate choice of $a$ we get exactly four solutions. Again, the issue here are the new algebraic relations that the commutativity of the derivations reveals after differentiating. Generally, there are algebraic relations that are not apparent until differentiating some additional number of times. One of the main ingredients in our bounds is to effectively determine how many times one has to differentiate to detect all such relations. We do this in Section \ref{bur} using results from \cite{Pierce2014fields}, and then we use this in Proposition \ref{goodfact} to prove our analogue of Fact \ref{ordi} in the case of several commuting derivations. We then combine the results of Sections \ref{prolongingthemagic} and \ref{bur} to prove our main theorem in Section \ref{count}. 

The rest of the introduction is devoted to explain the connections of our results to effective computational problems in differential algebra and effective results in number theory.

\subsection{The effective differential Nullstellensatz.}\label{diffynull} Given a system of (partial) differential equations $f_1 = 0 , \ldots  , f_r =0$ and a differential polynomial $f$, one can test if $f =0$ is a formal consequence of the given system. Specifically, there is an effective procedure which finds an expression for $f^k$, for some positive integer $k$, in terms of the elements $f_1, \ldots ,f_r$ and their derivatives, or shows that such an expression does not exist. The algorithm has two main steps: 

\begin{enumerate} 
\item Find an upper bound on the number of differentiations which might be required for such an expression. 
\item Find an upper bound on the degrees of the coefficients used in the expression and an upper bound for the number $k$. 
\end{enumerate}  

This procedure is called the \emph{effective differential Nullstellensatz}. The first attempt to solve this problem was considered in \cite{Seid} where it was (only) suggested how the bound for the first step of the algorithm could be obtained. The authors of \cite{golubitsky2009bound} gave a complete solution to the problem, but said of their solution: ``The differential elimination algorithms would be very useful for applications if there were faster versions of them". Special cases of the problem have been solved in the case of ordinary differential equations, where better bounds are available \cite{Argentine, QEG}. 

The bounds we establish in the course of this paper, specifically in Section \ref{bur}, can be used to improve the known bounds of the first step of the above algorithm. We will not elaborate on these ideas, but note that the reasoning along the lines of our srategy, and some of our results, are applied in \cite{GKO2014} to give new bounds for the effective differential Nullstellensatz. For instance, in the case of two derivations, the bound found in \cite{golubitsky2009bound} was $A(10,\max(n,h,d))$ where $A$ denotes the Ackermann function, $n$ is the number of variables, and $h$ and $d$ bound the order and degree of the differential polynomials, respectively. Using Lemma \ref{intwo} below, \cite{GKO2014} gives a bound which grows in $h$ as a tower of iterated exponentials of length $n$ and depends polynomially on $d$. This is an important practical development, because there are no values of $(n,h,d)$ such that $A(10,\max(n,h,d))$ can be calculated by current computers, while the iterated exponential bound may be practically calculated for many values of the inputs.

\subsection{Special points conjectures.} 
These type of effective bounds have also been applied to problems not a priori related to differential algebra. In particular, to the special points conjecture in number theory. For example, in \cite{HPnfcp}, Hrushovski and Pillay apply the effective bounds of the ordinary case to give effective bounds for the number of transcendental points contained in the intersection $X \cap \Gamma$ where $X$ is a subvariety of a semi-abelian variety $A$ which contains no translates of semi-abelian subvarieties and $\Gamma$ is a finite rational rank subgroup of $A$. 


Let us describe another recent application. We view $K^n$ as the moduli space of products of elliptic curves via their $j$-invariants. Numerical bounds on the size of finite sets given by various intersections of varieties with isogeny classes of transcendental points come via the effective bounds found in the ordinary case (together with a sharper bound for the degree of positive dimensional components as the one we point out in Remark \ref{secondrem}~(2) below). For instance, \cite{FSj} gives an effective upper bound for the degree of the Zariski closure of the intersection of an arbitrary Kolchin closed set in $K^n$ with the isogeny class of a tuple of transcendentals (various developments around the Andr\'e-Oort conjecture \cite{PilaAO} can be used to prove special cases of the finiteness result implicit in the next theorem; however, these methods are noneffective as they use the Pila-Wilkie counting theorem \cite{pila2006rational}): 


\begin{thm}\cite[Theorem 6.7]{FSj} \label{FSj}
Let $V \subseteq K^n$ be a Kolchin closed subset whose Zariski closure has dimension $d$ and $a$ be an $n$-tuple of transcendental points. Let 
$$\operatorname{Iso}(a) := \{ (b_1,\ldots,b_n) \in {K}^n ~\vert~ E_{a_i} \text{ and } E_{b_i} \text{are isogenous for } i \leq n\}$$
be the isogeny class of $a$.  If $W$ is the Zariski closure of $V \cap \operatorname{Iso}(a)$, then 
$$\deg W\leq (6\cdot \deg V)^{2^{3d}-1}.$$
In particular, when $V$ does not contain a weakly special subvariety (see \cite{PilaAO} for the definition) $W$ is zero dimensional, and one obtains a bound on the number of points in the intersection.
\end{thm}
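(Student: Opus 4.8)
The plan is to reduce this to the effective bounding results of \cite{HPnfcp} in the ordinary case, applied now to a differential equation of order three, using the differential algebra of the $j$-function together with the structural results on the $j$-set established earlier in \cite{FSj}. Recall first the classical third-order algebraic differential equation satisfied by $j$, the vanishing of its automorphic Schwarzian; after clearing denominators this becomes a single polynomial relation $P(y,\delta y,\delta^2 y,\delta^3 y)=0$ with $P\in\m Q[y_0,y_1,y_2,y_3]$ of total degree $6$, which is the origin of the constant $6$ in the bound. Let $X\subseteq K$ be its Kolchin closed solution set. From \cite{FSj} one has that $X$ is strongly minimal with $\operatorname{acl}(\emptyset)\cap X$ equal to the set of modular points (so every transcendental element of $X$ is generic), and that $X$ is geometrically trivial, the only nontrivial binary relation on its generic realisations being isogeny of the associated elliptic curves. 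Consequently, for a transcendental tuple $a=(a_1,\dots,a_n)$, each $a_i$ is a generic point of $X$ and its entire isogeny class lies in $X$; hence $\operatorname{Iso}(a)=\operatorname{Iso}(a_1)\times\cdots\times\operatorname{Iso}(a_n)\subseteq X^{n}$, so $W\subseteq\overline{V\cap X^{n}}$, and it suffices to bound $\deg\overline{V\cap X^{n}}$ and, in the case that $V$ contains no weakly special subvariety, to show $V\cap X^{n}$ is finite.

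For the degree bound one runs the Hrushovski--Pillay algorithm exactly as in \cite{HPnfcp}, but for the order-three equation $P=0$ in place of a first-order one. Passing to prolongations makes membership in $X^{n}$ classical: writing $B_3(V)$ for the Zariski closure of $\{(v,\delta v,\delta^2 v,\delta^3 v):v\in V\}$ and $\widehat W\subseteq K^{4n}$ for the variety cut out, in each block of four coordinates, by $P=0$ (a hypersurface of degree $6$ per block), the set $V\cap X^{n}$ is in definable bijection with a subset of $B_3(V)\cap\widehat W$; the prolongation degree estimates of Section~\ref{prolongingthemagic} (in the one-derivation case) bound $\deg B_3(V)$ in terms of $\deg V$, while the only constant entering $\widehat W$ is $6$. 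As in the ordinary case, the analogue of Fact~\ref{ordi} says that $\widehat W\cap B_3(V)$ cannot project dominantly onto the prolongation of $V$ --- otherwise the generic point of a component of $V$ would satisfy $P=0$ and $V\cap X^{n}$ would be infinite --- so one replaces $V$ by that strictly lower-dimensional projection and iterates, applying Bezout at each step. A careful accounting --- tracking the degree growth through three successive prolongations and using the sharper bound for positive-dimensional components of Remark~\ref{secondrem}~(2) to trim the final exponent --- yields $\deg\overline{V\cap X^{n}}\le(6\cdot\deg V)^{2^{3d}-1}$, the exponent $2^{3d}$ reflecting the order $3$ of the equation and the $d$ steps of the recursion; since $W\subseteq\overline{V\cap X^{n}}$, this gives the stated inequality.

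For the last assertion, assume $V$ contains no weakly special subvariety. Since $X$ is strongly minimal and geometrically trivial, the induced structure on $X^{n}$ is controlled by the isogeny relations among the coordinates: every positive-dimensional irreducible Kolchin closed subset of $X^{n}$ contains --- after permuting coordinates, freezing some of them to points of $X$ and equating others to isogenous images --- a coset of a product of isogeny classes, that is, a weakly special subvariety. Were $V\cap X^{n}$ infinite it would have such a positive-dimensional component, forcing a weakly special subvariety inside $V$, contrary to hypothesis; hence $V\cap X^{n}$ is finite, the algorithm above produces a zero-dimensional intersection, and $|W|\le|V\cap X^{n}|\le(6\cdot\deg V)^{2^{3d}-1}$.

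The step I expect to be the main obstacle is the differential-algebraic input quoted from \cite{FSj}: that the isogeny class of a transcendental $j$-invariant --- a priori only the countable increasing union $\bigcup_{N}\{b:\Phi_N(a_i,b)=0\}$ of modular correspondences of unbounded degree --- lies inside a single fixed strongly minimal set $X$, and that $X$ is geometrically trivial. This is the heart of \cite{FSj}; once it is granted, the present theorem is a careful but essentially mechanical application of the effective intersection-theoretic machinery of \cite{HPnfcp} and of Section~\ref{prolongingthemagic}.
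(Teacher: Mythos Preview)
This theorem is not proved in the present paper at all: it is quoted in the introduction as \cite[Theorem~6.7]{FSj}, purely to illustrate an application of the ordinary-case effective bounds, and no argument for it is given here. So there is no ``paper's own proof'' to compare your proposal against.

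That said, your sketch is broadly in the spirit of how the result is obtained in \cite{FSj}: one uses that the isogeny class of a transcendental $j$-invariant sits inside the solution set $X$ of the order-$3$ Schwarzian equation for $j$ (which after clearing denominators has degree~$6$), invokes the strong minimality and geometric triviality of $X$ established in \cite{FSj}, and then feeds the resulting order-$3$ system into the Hrushovski--Pillay algorithm. The constant $6$ and the exponent $2^{3d}$ arise exactly as you say, from the degree and order of the $j$-equation. One caution: your degree accounting is a bit loose. The general order-$\ell$ bound in the ordinary case (Remark~\ref{secondrem}) gives $(\deg V)^{\ell\,2^{\ell d}}(\deg W)^{2^{\ell d}-1}$, which with $\ell=3$ and $\deg W=6$ does not literally collapse to $(6\cdot\deg V)^{2^{3d}-1}$; getting the precise exponent in the statement requires running the iteration in the specific setup of \cite{FSj} (where $V$ is itself Kolchin closed and one works directly with its Zariski closure rather than a separate ambient variety), not just quoting the generic formula. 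Your identification of the hard input --- the strong minimality and triviality of $X$ from \cite{FSj} --- is exactly right; that is the substantive content, and the rest is bookkeeping.
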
 

While the above number theoretic results only use the effective bounds obtained in the ordinary case, we expect that the partial differential bounds obtained in this paper will be used in future applications of a similar nature.

\

\noindent {\bf Acknowledgements.} The first author would like to thank Tom Scanlon for numerous useful conversations. Both authors would like to thank Rahim Moosa and Alexey Ovchinnikov for their suggestions on a previous draft of this paper. Both authors would like to thank Gal Binyamini for pointing out a gap in the preliminary version of the degree bounds. Finally, both authors would like to thank the anonymous referee whose comments led to significant improvements in the presentation of the paper.

\section{On the dimension and degree of prolongation spaces} \label{prolongingthemagic}

In our algorithm of Section \ref{count} that yields the desired uniform bounds, one needs to keep track of dimensions and degrees of certain algebraic varieties. Most notably, we will use the algebraic variety $B_\ell(V)$ whose definition we recall after fixing some notation. We should mention that our exposition here has been influenced by \cite{MSJETS}, where very general notions of \emph{prolongations} are developed (for instance with very general operators, including derivations as a special case). In the special case of differential operators, the prolongations of \cite{MSJETS} are related to those of \cite{Johnson}. In the appendix of \cite{Buium} the relationship between the prolongations in the sense of \cite{Johnson} and arc spaces from algebraic geometry \cite{mustata2002singularities} is explained. We will elaborate on this relationship later in this section. For the type of results we pursue in this paper, we find the notation and development of \cite{MSJETS} most convenient. 

Let $m$ be a positive integer and fix a differentially closed field $(K,\Delta=\{\delta_1,\ldots,\delta_m\})$ of characteristic zero with commuting derivations. Throughout the paper, $\m N = \{ 0, 1, 2, \ldots \} $ denotes the natural numbers. For each $\xi=(\xi_1,\dots,\xi_m)\in \m N^m$, we let $|\xi|=\xi_1+\cdots+\xi_m$ and $\delta^\xi=\delta_m^{\xi_m}\cdots\delta_1^{\xi_1}$. For $\ell\in \m N$, we let $\alpha_\ell=\binom{\ell+m}{m}$ and $\Gamma(\ell)=\{\xi\in \m N^m: |\xi|\leq \ell\}$. It is easy to check, by induction on $\ell$, that $|\Gamma(\ell)|=\alpha_\ell$. The $\ell$-th nabla map $\nabla_\ell:K^n \to K^{n\cdot\alpha_\ell}$ is defined by 
$$x\mapsto (\delta^{\xi}x)_{\xi\in \Gamma(\ell)},$$
where $x=(x_1,\dots,x_{n})$ are coordinates for $K^n$. For $\ell=1$, we use $\nabla$ instead of $\nabla_1$.

We are interested in counting the number of points of finite subsets of $K^n$ of the form $Z=\{v\in V: \nabla_\ell (v)\in W\}$ where $V$ and $W$ are closed subvarieties of $K^n$ and $K^{n\cdot\alpha_\ell}$, respectively. For our methods of proof, which follow those in \cite{HPnfcp}, it will be useful to consider the following algebraic variety:

\begin{defn} 
Let $V\subseteq K^n$ be a closed subvariety. For each $\ell\in \mathbb N$, we let $B_\ell (V)\subseteq K^{n\cdot \alpha_{\ell}}$ be the Zariski closure of $\{\nabla_\ell(v)\in K^{n \cdot \alpha_\ell}: v\in V\}$. We let $\pi_\ell:B_\ell(V)\to V$ be projection onto the first $n$ coordinates.
\end{defn}

\begin{rem}
One can also define $B_\ell(V)$ as follows. Let $\mathcal I_\ell(V/K):=\{f\in K\{x\}: f(V)=0 \text{ and } \operatorname{ord}(f)\leq \ell\}$ where $K\{x\}$ denotes the ring of differential polynomials over $K$ in the variables $x=(x_1,\ldots,x_{n})$. Given $f\in K\{x\}$ with $\operatorname{ord}(f)\leq \ell$, we let $f^\xi$ be the polynomial over $K$ obtained from $f$ by replacing $\delta^\xi x$ by $x^{\xi}$, where $(x^\xi)_{\xi\in \Gamma(\ell)}$ are coordinates of $K^{n\cdot\alpha_\ell}$. Then the defining ideal of $B_\ell(V)$ is given by 
$$\{f^\xi \in K[x^\xi:\xi\in \Gamma(\ell)]: f\in \mathcal I_\ell(V/K)\}.$$
\end{rem}

We now show that the dimension of $B_\ell(V)$ can be expressed in terms of $
V$ and $\alpha_\ell$.

\begin{lem} \label{goodcomp} 
If $V\subseteq K^n$ is an irreducible subvariety, then $B_\ell(V)$ is irreducible and $\dim (B_\ell (V))= \alpha_\ell\cdot \dim (V)$. 
\end{lem}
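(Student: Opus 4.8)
The plan is to reduce the statement to a transcendence-degree computation over $K$. Since $V$ is irreducible, pick a generic point $v=(v_1,\dots,v_n)$ of $V$ over $K$, so $\operatorname{trdeg}(K\langle v\rangle_{\text{alg}}/K)=\dim V$ where $K\langle v\rangle_{\text{alg}}=K(v)$ is the (ordinary) field generated by $v$. By the definition of $B_\ell(V)$ as the Zariski closure of the image of $V$ under $\nabla_\ell$, the point $\nabla_\ell(v)=(\delta^\xi v)_{\xi\in\Gamma(\ell)}$ is a generic point of $B_\ell(V)$ over $K$; this simultaneously gives irreducibility of $B_\ell(V)$ (it has a generic point, being the closure of an irreducible image) and reduces the dimension claim to showing
$$\operatorname{trdeg}\big(K(\delta^\xi v:\xi\in\Gamma(\ell))/K\big)=\alpha_\ell\cdot\dim V.$$

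First I would establish the upper bound $\operatorname{trdeg}(K(\delta^\xi v:\xi\in\Gamma(\ell))/K)\le\alpha_\ell\cdot\dim V$, which is the easy direction: the field $K(\delta^\xi v:\xi\in\Gamma(\ell))$ is generated by $n\cdot\alpha_\ell$ elements, but more to the point it is generated over $K$ by the $\alpha_\ell$ many $n$-tuples $\delta^\xi v$, each of which lies in the differential field $K\langle v\rangle$; and any single $n$-tuple of elements of $K\langle v\rangle$ contributes at most $\dim V$ to the transcendence degree over $K$ — actually one needs the slightly stronger statement that the whole family contributes at most $\alpha_\ell\dim V$, which follows by inducting on $\ell$: the new generators $\delta^\xi v$ with $|\xi|=\ell$ number $n\binom{\ell+m-1}{m-1}=n(\alpha_\ell-\alpha_{\ell-1})$, but they satisfy the differentiated forms of the algebraic relations among $v,\dots$, so each block $(\delta^\xi v)_{|\xi|=\ell}$ adds at most $(\alpha_\ell-\alpha_{\ell-1})\dim V$. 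The lower bound is the main obstacle and requires genericity of $v$: I would choose $v$ to be a \emph{differentially generic} point of $V$, i.e. a generic point of the Kolchin generic type, equivalently realize it in a differential field extension so that $\delta^\xi v$ for $|\xi|$ just large enough are "as independent as possible". Concretely, after a coordinate permutation assume $v_1,\dots,v_d$ ($d=\dim V$) are an algebraically independent transcendence basis of $K(v)/K$ with $v_{d+1},\dots,v_n$ algebraic over them; then I claim the $\alpha_\ell\cdot d$ elements $\{\delta^\xi v_i:\xi\in\Gamma(\ell),\,1\le i\le d\}$ are algebraically independent over $K$. This is where I expect to spend real effort.

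To prove that independence claim, the clean route is to use that $V$ is defined over $K$ and to exhibit \emph{some} point of $V$ in a differential field where the coordinates $v_1,\dots,v_d$ together with all their derivatives up to level $\ell$ are algebraically independent — e.g. take $v_1,\dots,v_d$ to be differential indeterminates (elements of a differential field whose $\{\delta^\xi v_i\}$ are algebraically independent, which exists since $\operatorname{DCF}_{0,m}$ has such generic points / one can work in a differential polynomial ring), and then solve for $v_{d+1},\dots,v_n$ over the field these generate using that the projection $V\to\mathbb A^d$ is dominant, passing to a differential closure to land back inside $V(K')$. For such a $v$, the $\alpha_\ell d$ derivatives $\delta^\xi v_i$ ($i\le d$) are algebraically independent over $K$ by construction, giving $\operatorname{trdeg}\ge\alpha_\ell d$, and combined with the upper bound we get equality; since $\nabla_\ell(v)$ is then a generic point of $B_\ell(V)$ (it specializes to every point of the image, and the image is dense), $\dim B_\ell(V)=\alpha_\ell\dim V$. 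The two things to be careful about are: (i) that a "differentially generic over $K$" point of $V$ genuinely exists and projects onto a differentially generic point of $\mathbb A^d$ — this uses dominance of $\pi:V\to\mathbb A^d$ and saturation of the ambient differentially closed field; and (ii) that the differential relations forced on $\delta^\xi v_i$ for $|\xi|=\ell$ by the algebraic relations among the $v_i$ really do cut the dimension down to exactly $\alpha_\ell\dim V$ and not less — i.e. the upper-bound induction is tight — which is automatic once the lower bound is in hand, so no independent argument is needed there.
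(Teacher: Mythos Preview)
Your dimension argument is essentially correct and amounts to a hands-on version of what the paper compresses into one line (``the differential transcendence degree of the differential function field of $V$ is equal to $\dim V$''). Constructing $v\in V$ with $v_1,\dots,v_d$ differentially independent over $K$ and the remaining $v_{d+1},\dots,v_n$ algebraic over $K(v_1,\dots,v_d)$ does give $\operatorname{trdeg}\big(K(\nabla_\ell(v))/K\big)=\alpha_\ell\cdot d$: the lower bound is immediate, and for the upper bound the cleanest phrasing is that iterated differentiation of the minimal polynomials shows every $\delta^\xi v_j$ (for $j>d$) lies in $K(v)\big(\delta^\eta v_i:i\le d,\ |\eta|\le|\xi|\big)$, so $K(\nabla_\ell(v))$ has transcendence degree at most $\alpha_\ell d$ over $K$. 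Your inductive formulation of the upper bound is a bit roundabout but recoverable.

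The genuine gap is irreducibility. You assert that $\nabla_\ell(V)$ is an ``irreducible image'' and later that your differentially generic $\nabla_\ell(v)$ ``specializes to every point of the image'', but neither is justified: $\nabla_\ell$ is not a morphism of algebraic varieties, so Zariski-irreducibility of $V$ does not pass to $\nabla_\ell(V)$ for free. What your argument actually yields is an irreducible subvariety $Y=\overline{\{\nabla_\ell(v)\}}\subseteq B_\ell(V)$ of dimension $\alpha_\ell d$, together with the bound $\dim B_\ell(V)\le\alpha_\ell d$; nothing yet excludes other top-dimensional components. To get $Y=B_\ell(V)$ you must show that every differential polynomial of order $\le\ell$ vanishing at your $v$ vanishes on all of $V$, i.e.\ that the differential ideal $\{f\in K\{x\}:f(V)=0\}$ is prime. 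This is exactly Kolchin's irreducibility theorem, which the paper invokes directly: $V$ Zariski-irreducible implies $V$ Kolchin-irreducible, whence $\nabla_\ell(V)$ is the image of a Kolchin-irreducible set under a Kolchin-continuous map and its Zariski closure is irreducible. Your construction of $v$ is in effect a construction of a Kolchin-generic point of $V$, but verifying that it \emph{is} Kolchin-generic---that no ``extra'' differential relations hold at $v$ beyond those forced by $I(V)$---is the nontrivial content you have not supplied, and there is no shortcut around it.
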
 
\begin{proof} 
This follows from Kolchin's irreducibility theorem \cite[Chap. IV, \S17]{KolchinDAAG}. Since $V$ is irreducible in the Zariski topology, $V$ is also irreducible in the Kolchin topology, and so $B_\ell(V)$ is irreducible being the Zariski closure of a Kolchin-irreducible variety (the graph of a differential-algebraic function with Kolchin-irreducible domain). The computation of $\dim (B_\ell(V))$ follows from the fact that the differential transcendence degree of the differential function field of $V$ is equal to $\dim V$. 
\end{proof} 

Let us observe the following fact which allows us to restrict certain arguments about $B_\ell(V)$ to the case when $V$ is irreducible.

\begin{lem}\label{irrb}
Let $V$ be a closed subvariety of $K^n$. If $(V_i)_{1\leq i\leq p}$ are the irreducible components of $V$, then the $B_\ell(V_i)$'s are the irreducible components of $B_\ell(V)$.
\end{lem}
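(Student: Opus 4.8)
The plan is to prove the two inclusions of sets of irreducible components by exploiting the behavior of the nabla map under the decomposition $V = \bigcup_{i=1}^p V_i$. First I would observe that the nabla map is well-defined pointwise: for $v \in V$ we have $v \in V_i$ for some $i$, and $\nabla_\ell(v) \in \{\nabla_\ell(w) : w \in V_i\} \subseteq B_\ell(V_i)$. Hence $\{\nabla_\ell(v) : v \in V\} = \bigcup_{i=1}^p \{\nabla_\ell(w) : w \in V_i\}$, and taking Zariski closures (which commutes with finite unions) gives $B_\ell(V) = \bigcup_{i=1}^p B_\ell(V_i)$. By Lemma \ref{goodcomp}, each $B_\ell(V_i)$ is irreducible, so this exhibits $B_\ell(V)$ as a finite union of irreducible closed sets.

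The remaining point is that this union is irredundant, i.e. no $B_\ell(V_i)$ is contained in the union of the others; once that is established, the $B_\ell(V_i)$ are by definition precisely the irreducible components of $B_\ell(V)$. Suppose toward a contradiction that $B_\ell(V_i) \subseteq \bigcup_{j \neq i} B_\ell(V_j)$. Since $B_\ell(V_i)$ is irreducible and the right-hand side is a finite union of closed sets, we would get $B_\ell(V_i) \subseteq B_\ell(V_j)$ for some $j \neq i$. Now I would apply the projection $\pi_\ell$ onto the first $n$ coordinates: since $\pi_\ell(B_\ell(V_k))$ is Zariski-dense in $V_k$ (indeed it contains $V_k$, as $\nabla_\ell$ has the identity in its first $n$ coordinates, so $\pi_\ell \circ \nabla_\ell = \mathrm{id}$), applying $\pi_\ell$ and taking closures yields $V_i \subseteq V_j$, contradicting that the $V_i$ are the distinct irreducible components of $V$.

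The main (and only mild) obstacle is making precise the claim that $\pi_\ell(B_\ell(V_k))$ has $V_k$ as its Zariski closure, and more generally that taking Zariski closures interacts well with images under $\pi_\ell$; but this is immediate since $V_k \subseteq \pi_\ell(B_\ell(V_k)) \subseteq V_k$, the first inclusion because $v = \pi_\ell(\nabla_\ell(v))$ for every $v \in V_k$ and the second because $\pi_\ell$ maps $B_\ell(V_k)$ into $V_k$ by the very definition of $B_\ell$. Everything else is the standard formalism of irreducible components in a Noetherian topological space together with Lemma \ref{goodcomp}, so no serious difficulty is expected.
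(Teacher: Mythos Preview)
Your proposal is correct and follows essentially the same approach as the paper: establish $B_\ell(V)=\bigcup_i B_\ell(V_i)$, invoke Lemma \ref{goodcomp} for irreducibility, and show irredundancy by projecting via $\pi_\ell$ back to $V$. The only cosmetic difference is that you first use irreducibility of $B_\ell(V_i)$ to reduce to a single inclusion $B_\ell(V_i)\subseteq B_\ell(V_j)$ before projecting, whereas the paper projects the whole union at once to get $V_i\subseteq\bigcup_{j\neq i}V_j$; both are equally valid.
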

\begin{proof}
We have 
$$B_\ell(V)=B_\ell(V_1)\cup\cdots\cup B_\ell(V_p).$$
By Lemma \ref{goodcomp}, the $B_\ell(V_i)$'s are irreducible. It suffices to show that the decomposition of $B_\ell(V)$ is irredundant. If it were not irredundant, for some $i$ we would have $B_i(V)\subseteq \cup_{j\neq i}B_\ell(V_j)$ and, taking projections, this would imply that $V_i\subseteq \cup_{j\neq i}V_j$ which is impossible.
\end{proof}

We now aim to show that 
\begin{equation}\label{degbound}
\deg B_\ell(V)\leq (\deg V)^{\alpha_\ell}.
\end{equation}
But before doing so, let us remind the reader of the notion of degree (and its basic properties) for affine algebraic varieties. If $V\subseteq K^n$ is an irreducible subvariety of dimension $d$, then the \emph{degree} of $V$ is defined as
\begin{align*}
\deg V =&  \max \{|V\cap H_1\cdots\cap H_d|: \text{ each } H_i \text{ is a hyperplane of } K^n \\
 & \qquad \text { and the intersection is finite}\}
\end{align*}
When $V$ is not irreducible, $\deg V$ is defined as the sum of the degrees of its irreducible components. 

It follows immediately from the definition of degree that $\deg (K^n)=1$, for any $n$, and that the degree of a finite set equals its cardinality.  
In the special case of an irreducible hypersurface $H \subseteq K^n$, one can show that the degree of $H$ is equal to the degree of a generator of the ideal of polynomials over $K$ vanishing on $H$. 

We will make use of the following results of Heintz \cite{Heintz}:

\begin{fact}\label{Bezout} 
\begin{enumerate}
\item If $\phi:K^n\to K^{n'}$ is an affine linear map (e.g., a projection) and $V\subseteq K^n$ is a subvariety, then $\deg \,\phi(V)\leq \deg V$. 
\item (Bezout's Inequality) If $V$ and $V'$ are closed subvarieties of $K^n$, then $$\deg (V\cap V')\leq \deg V \cdot \deg V'$$
\end{enumerate}
\end{fact}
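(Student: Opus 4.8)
The plan is to prove the two parts separately, each time reducing to the case of irreducible varieties; this reduction is routine since degree is subadditive over unions of closed sets (every irreducible component of $\bigcup_i Y_i$ is an irreducible component of some $Y_i$), while $\phi(V)=\bigcup_i\phi(V_i)$ and $V\cap V'=\bigcup_{i,j}(V_i\cap V_j')$ for $(V_i),(V_j')$ the components of $V,V'$. For part (1), let $V\subseteq K^n$ be irreducible of dimension $d$, so $\phi(V)$ is irreducible (image of an irreducible set); put $e=\dim\overline{\phi(V)}\le d$. The case of constant $\phi$ is trivial, so I may assume that the preimage under $\phi$ of a generic hyperplane of $K^{n'}$ is an affine hyperplane of $K^n$. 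First I pick generic hyperplanes $H_1,\dots,H_e$ of $K^{n'}$ so that $S:=\phi(V)\cap H_1\cap\cdots\cap H_e$ is finite, is contained in $\phi(V)$, and has exactly $\deg\phi(V)$ points (genericity lets one avoid the boundary $\overline{\phi(V)}\setminus\phi(V)$, of dimension $<e$). Setting $V'=V\cap\phi^{-1}(H_1)\cap\cdots\cap\phi^{-1}(H_e)$, for generic choices no $\phi^{-1}(H_i)$ contains $V$, so $V'$ has pure dimension $d-e$; moreover $\phi(V')=S$ (each $s\in S$ is $\phi(v)$ for some $v\in V$, and then $v\in V'$), each irreducible component of $V'$ maps onto a single point of $S$, and all of $S$ is attained. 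Cutting by $d-e$ further generic hyperplanes $L_1,\dots,L_{d-e}$ of $K^n$, the set $V''=V'\cap L_1\cap\cdots\cap L_{d-e}$ is finite and still meets every component of $V'$, so $\phi(V'')=S$. As $V''$ is a finite intersection of $V$ with $d$ hyperplanes of $K^n$, the definition of degree gives $\deg V\ge|V''|\ge|S|=\deg\phi(V)$.

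For part (2), after reducing to $V,V'$ irreducible I use the diagonal (join) trick: $V\cap V'\cong\{(v,v')\in V\times V':v=v'\}=(V\times V')\cap\Delta$, where $\Delta=\bigcap_{i=1}^{n}\{x_i=y_i\}\subseteq K^n\times K^n$ is an intersection of $n$ affine hyperplanes. Two facts then finish the proof. First, $\deg(V\times V')\le\deg V\cdot\deg V'$: the reverse inequality is immediate by taking products of generic hyperplane sections of the two factors, while for $\le$ one notes that a finite linear section of $V\times V'$ of complementary dimension fibers over its finite image in the first factor, each fibre being a finite linear section of $V'$, together with the general fact that a finite linear section of any variety $X$ has at most $\deg X$ points. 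Second, cutting a variety $X$ by a single affine hyperplane $H$ cannot raise its degree, $\deg(X\cap H)\le\deg X$: for each irreducible component of $X$, either $H$ contains it, with no change, or it drops dimension by one, and then a generic-hyperplane-section count bounds the degree of the slice by that of the component. Applying the second fact $n$ times to the hyperplanes defining $\Delta$, and then the first, gives $\deg(V\cap V')=\deg\big((V\times V')\cap\Delta\big)\le\deg(V\times V')\le\deg V\cdot\deg V'$.

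I expect the principal obstacle, in both parts, to be the genericity bookkeeping: verifying that the chosen hyperplanes really do drop dimension by exactly one, that no lower-dimensional (in particular isolated) component is lost when passing to a section, and that a finite linear section of a variety never exceeds its degree. None of this is deep, but it must be handled carefully; these are exactly the points carried out in \cite{Heintz}.
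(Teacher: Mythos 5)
The paper gives no proof of this Fact: it is quoted verbatim from Heintz \cite{Heintz}, so your argument has to be measured against his. Your part (1) is correct and is essentially his argument: cut $\overline{\phi(V)}$ by $e=\dim\overline{\phi(V)}$ generic hyperplanes, pull them back to hyperplanes of $K^n$, and complete to a finite section of $V$ by $d=\dim V$ hyperplanes that still meets every fibre component. The genericity points you flag at the end (avoiding the constructible boundary of $\phi(V)$, keeping each successive cut proper, and the fact that a finite hyperplane section of complementary codimension never exceeds the degree) are exactly the standard facts established in \cite{Heintz}.

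Part (2) has a genuine gap, and it sits at the only hard step. The diagonal reduction and the claim that cutting by a single hyperplane cannot raise the degree are fine, but your argument for $\deg(V\times V')\le\deg V\cdot\deg V'$ does not work. You bound a finite linear section $C$ of $V\times V'$ by $|\pi_1(C)|\cdot\deg V'$, where $\pi_1$ is the projection to the first factor, tacitly using $|\pi_1(C)|\le\deg V$. But $\pi_1(C)$ is not a linear section of $V$: it is the set of $v\in V$ for which $V'$ meets the (affine-linear) fibre $E_v$ of the cutting subspace, which is an intersection of $V$ with a generally nonlinear incidence variety, and it can have far more than $\deg V$ points. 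Concretely, take $V=V'=\{x_2=x_1^2\}\subseteq K^2$ (degree $2$) and cut $V\times V'\subseteq K^4$ by the hyperplanes $y_1=x_2$ and $y_2=\lambda-x_1$: here $E_v=\{F(v)\}$ for the affine map $F(x_1,x_2)=(x_2,\lambda-x_1)$, the section consists of the four points $(t,t^2,t^2,t^4)$ with $t^4+t-\lambda=0$, all with distinct first coordinates, and $|\pi_1(C)|=4>2=\deg V$. This is not a repairable edge case: for a section in general position the fibres are singletons, so $|\pi_1(C)|$ is the full $\deg V\cdot\deg V'$ and the count is circular. The product inequality is the real content of the affine B\'ezout inequality; Heintz proves it (his Lemma 2) only after showing that the degree of an irreducible variety is attained at a \emph{generic} complementary linear section and bounds every finite affine-linear section of every codimension, and even then the product formula needs its own induction. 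You should either supply that argument or cite it, as the paper does.
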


The proof of inequality (\ref{degbound}) requires more work than the computation of $\dim (B_\ell (V))$, mainly because determining its defining equations is a nontrivial problem. The proof we present here follows the strategy of Fact 3.6 of \cite{HPnfcp}; that is, we go via the theory of prolongations spaces.

Let us recall the notion of prolongation for algebraic varieties (our presentation is informed by Sections 3 and 4 of \cite{MSJETS}). For $\ell\in \mathbb N$, we let 
$$K_\ell=K[\epsilon]/(\epsilon)^{\ell+1}$$
where $\epsilon=(\epsilon_1,\dots,\epsilon_m)$ is a tuple of variables. Note that $K_\ell$ has the \emph{standard} $K$-algebra structure $s:K\to K_\ell$, but also the \emph{exponential} $K$-algebra structure $e:K\to K_\ell$ given by
$$a\mapsto \sum_{\xi\in \Gamma(\ell)}\frac{1}{\xi_1!\cdots \xi_m!}\delta^{\xi}(a)\epsilon^\xi,$$
where $\epsilon^\xi=\epsilon_1^{\xi_1}\cdots\epsilon_m^{\xi_m}$. To distinguish between these two structures we write $K_\ell^e$ to denote the exponential structure.

\begin{defn}\label{defpro}
Given an algebraic variety $V$ over $K$, the \emph{$\ell$-th prolongation} $\tau_\ell V$ of $V$ is the algebraic variety given by the Weil restriction of $V \times_K K_\ell^e$ from $K_\ell$ to $K$. Note that the base change $V\times_K K_\ell^e$ of $V$ is with respect to the exponential $K$-algebra structure, while the Weil restriction is with respect to the standard $K$-algebra structure. 
\end{defn}

The prolongation $\tau_\ell V$ has the characteristic property that for any $K$-algebra $R$, if we set $R_\ell:=K_\ell\otimes R$, then the $R$-points of $\tau_\ell V$ can be identified with the $R_\ell$-points of $V\times_K K_\ell^e$. Via this identification, for any morphism $f: V\to W$  we have a natural induced morphism $\tau_\ell f:\tau_\ell V\to \tau_\ell W$, and the exponential structure $e:K\to K_\ell$ induces a map $\nabla_\ell:V(K)\to \tau_\ell(V)(K)$. Also, using the residue map $K_\ell\to K$ and identifying $\tau_0V$ with $V$, we obtain the projection map $\pi_\ell:\tau_\ell V\to V$ of which $\nabla_\ell$ is a section (on $K$-points).

Prolongations of affine algebraic varieties are again affine. In terms of equations, if $V\subseteq K^n$ is a closed subvariety with defining ideal $\mathcal I\subseteq K[x]$ and $x=(x_1,\ldots,x_{n})$, then $\tau_\ell V$ is the subvariety of $K^{n\cdot\alpha_\ell}$ defined as follows: Let $\bar x=(x^\xi)_{\xi\in \Gamma(\ell)}$ be coordinates for $K^{n\cdot \alpha_\ell}$, where we identify $x=x^{\bf 0}$. For each $f\in \mathcal I$, let $f^e\in K_\ell[x]$ be the polynomial obtained by applying $e$ to the coefficients of $f$, and compute 
\begin{equation}\label{eq11}
f^e(\sum_{\xi\in\Gamma(\ell)}x^\xi \epsilon^\xi)=\sum_{\xi\in\Gamma(\ell)}f^{\xi}(\bar x)\epsilon^\xi
\end{equation}
in the polynomial ring $K_\ell[\bar x]=\bigoplus_{\xi\in\Gamma(\ell)} K[\bar x]\epsilon^\xi$. Then $\tau_\ell V$ is the zero set of $f^\xi=0$ as $\xi$ ranges in $\Gamma(\ell)$ and $f$ ranges in $\mathcal I$ (in fact it suffices to range $f$ in a set of generators of the ideal $\mathcal I$). The computation in (\ref{eq11}) yields that every $f^\xi$ is obtained by replacing $\delta^\xi x$ by $x^\xi$ in the differential polynomial $\delta^\xi f$. Note that with respect to these coordinates, the nabla map $\nabla_\ell: V\to \tau_\ell V$ is given by $\nabla_\ell(x)=(\delta^\xi x)_{\xi\in\Gamma(\ell)}$, and the projection map $\pi_\ell:\tau_\ell V\to V$ is given by $\bar x\mapsto x$.

\begin{rem}
The above construction of the prolongation is a particular case of the general theory of prolongation spaces developed by Moosa and Scanlon in \cite{MSJETS}. In their general setting they fix a ring $R$ and an arbitrary finite free $R$-algebra with basis (see Remark 3.2 of \cite{MSJETS}). In our case, $R=\mathbb Q$ and the finite $\mathbb Q$-algebra is $\mathbb Q[\epsilon_1, \ldots , \epsilon _m ]/(\epsilon_1, \ldots , \epsilon_m)^{n+1}$. Thus, we can (and will) freely apply the results of \cite{MSJETS}.
\end{rem}

Let $V\subseteq K^n$ be an irreducible subvariety and $\ell \in \m N$. Since $\nabla_\ell(v)\in\tau_\ell V$ for all $v\in V$, we have that $B_\ell(V)$ is an irreducible subvariety of $\tau_\ell V$. In general, $\tau_\ell V$ might not be irreducible and its dimension might be larger than $\dim (B_\ell (V))$ as the following example shows:

\begin{exam}
Consider the ordinary case $\Delta=\{\delta\}$. Let $V \subseteq K^3$ be given by the equation $x ^4 +y^4+z^4 =0$. By Proposition \ref{goodcomp}, $\dim (B_3 (V)) = 8$. In this case, since the variety is defined over the constants, $\tau_3V$ is the same as what in \cite{mustata2002singularities, mustatanotes} Musta{\c{t}}{\u{a}} calls the $3^{\text{rd}}$-jet space of $V$. It follows from the generalities in Examples 1.9 through 1.11 of \cite{mustatanotes} that in this case $\dim( \tau _3 V )=9$. Here we simply point out why the dimension is at least 9. Since $\tau_3V$ can be identified with the $K[\epsilon]/(\epsilon)^4$-points of $V$, any point of the form $(a_0+a_1 \epsilon + a_2 \epsilon^2 +a_3 \epsilon^3 , b_0+b_1 \epsilon + b_2 \epsilon^2 +b_3 \epsilon^3 , c_0+c_1 \epsilon + c_2 \epsilon^2 +c_3 \epsilon^3 )$ is in $\tau_3V$. Correspondingly, the fiber of $\pi_3: \tau_3V \rightarrow V$ over $(0,0,0)$ has dimension $9$. 
\end{exam}

\begin{rem}
Various examples of the above kind appear in \cite{mustata2002singularities}. Furthermore, in the ordinary case $\Delta=\{\delta\}$, the dimension of $\tau_\ell(V)$ grows in a way which is controlled by an invariant called the \emph{log canonical threshold} $lct(V,K^n)$ (see \cite{mustata2002singularities} for the definition). When $V$ is defined over the constants, and so $\tau_\ell(V)$ coincides the $\ell^{\text{th}}$-jet space of $V$, we have the following formula (which is a special case of \cite[Corollary 0.2]{mustata2002singularities}):
$$lct (V, K^n ) = n - \max _{\ell \in \m N} \frac{\dim \tau_\ell (V)}{\ell+1}. $$
In \cite{rosen2008prolongations}, Rosen proved that, for arbitrary $V$, $\tau_\ell(V)$ and the $\ell^{\text{th}}$-jet space of $V$ are isomorphic as $\delta$-varieties. Hence, the above formula holds for general $V$ (i.e., not necessarily defined over the constants).

\end{rem}
Conveniently, when $V$ is smooth we do have that $\tau_\ell V=B_\ell(V)$, see \cite[\S4.3]{MSJETS}. This behavior of $\tau_\ell V$ on the non-singular locus of $V$ will allow us to show that in general $B_\ell (V)$ is an irreducible component of $\tau_\ell V$, and so we will obtain some information about the degree of $B_\ell(V)$ from that of $\tau_\ell(V)$.

Let us give an upper bound for the degree of $\tau_\ell(V)$ in the case when $V$ is a hypersurface.

\begin{lem}\label{prope}
If $H\subseteq K^n$ is an affine hypersurface, then $\deg \tau_\ell(H) \leq (\deg H)^{\alpha_\ell}$.
\end{lem}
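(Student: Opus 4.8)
The plan is to reduce the statement to Bezout's inequality (Fact~\ref{Bezout}(2)) after bounding the degrees of the defining equations of $\tau_\ell(H)$. Since $H\subseteq K^n$ is a hypersurface, its vanishing ideal $\mathcal I(H)\subseteq K[x_1,\dots,x_n]$ is principal; fix the reduced generator $f$, so that $\deg f=\deg H=:d$. By the explicit description of prolongations of affine varieties recalled above, applied to the single generator $f$, the variety $\tau_\ell(H)\subseteq K^{n\cdot\alpha_\ell}$ is the common zero set
\[
\tau_\ell(H)=\bigcap_{\xi\in\Gamma(\ell)}H^\xi,\qquad H^\xi:=\{\,f^\xi=0\,\}\subseteq K^{n\cdot\alpha_\ell},
\]
of the $\alpha_\ell=|\Gamma(\ell)|$ polynomials $f^\xi$, where $f^\xi$ is obtained from the differential polynomial $\delta^\xi f$ by replacing each $\delta^\eta x$ occurring in it by the coordinate $x^\eta$.

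The key step is to check that $\deg f^\xi\le d$ for every $\xi\in\Gamma(\ell)$. This rests on the elementary fact that a single application of any $\delta_j$ does not increase the total degree of a differential polynomial: by the Leibniz rule, $\delta_j$ applied to a differential monomial $c\prod_{i,\eta}(\delta^\eta x_i)^{a_{i,\eta}}$ of degree $\sum a_{i,\eta}$ produces $(\delta_j c)\prod_{i,\eta}(\delta^\eta x_i)^{a_{i,\eta}}$ together with finitely many monomials in each of which exactly one factor $\delta^\eta x_i$ has been replaced by $\delta^{\eta+e_j}x_i$ --- all of them still of total degree $\sum a_{i,\eta}$. Iterating $|\xi|\le\ell$ times shows that $\delta^\xi f$, viewed as a polynomial in the differential indeterminates $\delta^\eta x$ (all of which have $|\eta|\le\ell$ and hence correspond to coordinates of $K^{n\cdot\alpha_\ell}$), has degree at most $\deg f=d$. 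Consequently $\deg f^\xi\le d$, so each $H^\xi$ is either a hypersurface of degree at most $d$, or (when $f^\xi$ vanishes identically) all of $K^{n\cdot\alpha_\ell}$, of degree $1$; in either case $\deg H^\xi\le d$.

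It then remains to iterate Bezout's inequality over the $\alpha_\ell$ equations:
\[
\deg\tau_\ell(H)=\deg\Bigl(\bigcap_{\xi\in\Gamma(\ell)}H^\xi\Bigr)\;\le\;\prod_{\xi\in\Gamma(\ell)}\deg H^\xi\;\le\;d^{\,\alpha_\ell}=(\deg H)^{\alpha_\ell},
\]
using that a finite intersection of closed subvarieties is again a closed subvariety, so Fact~\ref{Bezout}(2) applies inductively.

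I expect the only real obstacle to be the degree estimate $\deg f^\xi\le\deg H$ of the middle paragraph, i.e.\ the observation that differentiation does not raise the degree of a differential polynomial, together with the bookkeeping that $\delta^\xi f$ involves only the indeterminates $\delta^\eta x$ with $|\eta|\le|\xi|\le\ell$. Everything else is a formal application of the prolongation description and of Bezout's inequality. One should also be slightly careful to use the generator of the radical ideal $\mathcal I(H)$, so that $\deg f$ equals $\deg H$ rather than merely bounding it from above, since the degree of a hypersurface is only $\le$ the degree of an arbitrary defining polynomial.
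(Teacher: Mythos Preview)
Your proof is correct and follows essentially the same route as the paper: take a generator $f$ of the principal ideal of $H$, observe that $\tau_\ell(H)$ is cut out by the $\alpha_\ell$ polynomials $f^\xi$ each of degree at most $\deg f$, and apply Bezout's inequality. Your argument is in fact slightly more detailed than the paper's, which simply asserts that the $f^\xi$ have degree at most $d$ without spelling out the Leibniz-rule justification you give.
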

\begin{proof} 
Let $f$ be a polynomial of degree $d=\deg H$ that generates the ideal of $H$ over $K$. By (\ref{eq11}) and comments after, the prolongation $\tau_\ell(H)$ is defined by $f^\xi(\bar x)=0$ as $\xi$ ranges in $\Gamma(\ell)$, where $f^\xi(\bar x)$ is obtained by replacing $\delta^\xi x$ by $x^\xi$ in the differential polynomial $\delta^\xi f$. Thus, as polynomials in the variables $\bar x$, each of the $f^\xi$'s has degree at most $d$. So, by Bezout's inequality (see Fact \ref{Bezout}), the degree of their intersection, which equals $\tau_\ell (H)$, has degree at most $(\deg H) ^ {\alpha_\ell}$.
\end{proof}

We now show that $B_\ell(V)$ is an irreducible component of $\tau_\ell(V)$ \footnote{At this stage of the argument in Fact 3.6 of \cite{HPnfcp}, the authors argue that, when $V$ is a hypersurface, $B_\ell(V)$ is a component of $\tau_\ell(V)$ because they have the same dimension. However, they did not take into account the possibility that $V$ may be singular and so that $B_\ell(V)$ and $\tau_\ell(V)$ may have different dimensions. We have corrected the argument here using a reduction to the non-singular locus.}. 

\begin{prop}
Let $\ell\in \m N$. If $V\subseteq K^n$ is an irreducible subvariety, then $\tau_\ell (V)$ has only one irreducible component projecting dominantly onto $V$ and the dimension of this component is $\alpha_\ell \cdot \dim V$. 
\end{prop}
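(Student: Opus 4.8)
The plan is to show first that $\tau_\ell(V)$ has at least one component mapping dominantly onto $V$, namely $B_\ell(V)$, and then that it has at most one such component, again $B_\ell(V)$; the dimension statement then follows from Lemma \ref{goodcomp}. For the first part, since $\nabla_\ell: V(K)\to \tau_\ell(V)(K)$ is a section of $\pi_\ell$, the image $\nabla_\ell(V)$ is Zariski dense in $B_\ell(V)$ by definition, and $\pi_\ell\circ\nabla_\ell = \mathrm{id}_V$, so $\pi_\ell$ restricted to $B_\ell(V)$ is dominant onto $V$. Hence $B_\ell(V)$ is contained in some component of $\tau_\ell(V)$ that dominates $V$, and I claim $B_\ell(V)$ is itself that component; this is the content of the step labelled ``We now show that $B_\ell(V)$ is an irreducible component of $\tau_\ell(V)$'' in the excerpt, whose proof I am allowed to use.

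The key reduction for that step is to pass to the non-singular locus $V_{\mathrm{reg}}$ of $V$, which is a dense open subvariety. By the cited fact (\cite[\S4.3]{MSJETS}), over a smooth variety the prolongation coincides with $B_\ell$; concretely, $\tau_\ell(V_{\mathrm{reg}}) = B_\ell(V_{\mathrm{reg}})$, which is irreducible of dimension $\alpha_\ell\cdot\dim V$ by Lemma \ref{goodcomp} (applied to $V_{\mathrm{reg}}$, whose closure is $V$). Now $\pi_\ell^{-1}(V_{\mathrm{reg}})$ is a dense open subset of $\tau_\ell(V)$ over the dense open $V_{\mathrm{reg}}\subseteq V$, and the prolongation construction is compatible with restriction to opens, so $\pi_\ell^{-1}(V_{\mathrm{reg}}) \cap \tau_\ell(V) = \tau_\ell(V_{\mathrm{reg}})$ up to the natural identification; in particular this open piece is \emph{irreducible}. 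Its closure in $\tau_\ell(V)$ is therefore an irreducible component, and it contains $\nabla_\ell(V_{\mathrm{reg}})$, hence equals $B_\ell(V)$. Any other component of $\tau_\ell(V)$ must be disjoint from the open set $\pi_\ell^{-1}(V_{\mathrm{reg}})$, hence maps into the proper closed subset $V\setminus V_{\mathrm{reg}}$ under $\pi_\ell$, so it cannot dominate $V$. This gives uniqueness, and the dimension of the distinguished component $B_\ell(V)$ is $\alpha_\ell\cdot\dim V$ by Lemma \ref{goodcomp}.

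The main obstacle I expect is making the phrase ``the prolongation construction is compatible with restriction to Zariski-open subsets'' fully precise — i.e., that $\tau_\ell$ of an open subvariety $U\subseteq V$ is canonically identified with the preimage $\pi_\ell^{-1}(U)$ inside $\tau_\ell(V)$. For affine $V$ this is essentially visible from the equational description after \eqref{eq11} (localizing the coordinate ring commutes with applying $e$ to coefficients and extracting the $\epsilon^\xi$-components), and in general it is one of the functoriality properties recorded in \cite{MSJETS}; I would cite that rather than reprove it. A secondary subtlety is that $V_{\mathrm{reg}}$ is not closed in $K^n$, so ``$\deg$'' and the hypersurface computation of Lemma \ref{prope} are not directly available on it — but for the present proposition only dimension and irreducibility of components are at stake, so this does not intervene here (it will matter in the subsequent degree estimate \eqref{degbound}).
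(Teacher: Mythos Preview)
Your argument is correct and follows the same overall strategy as the paper: restrict to the smooth locus $X=V_{\mathrm{reg}}$, use that $\tau_\ell(X)$ is irreducible there, and conclude that any component of $\tau_\ell(V)$ not equal to the closure of $\tau_\ell(X)$ lives over the singular locus and hence cannot dominate $V$. The ``compatibility with opens'' you flag is exactly what the paper justifies by invoking \cite[Proposition~4.6]{MSJETS} (prolongation of an \'etale morphism is \'etale) applied to the open immersion $X\hookrightarrow V$; so your proposed citation is on target.

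Where you diverge from the paper is in the dimension statement. The paper computes $\dim\tau_\ell(X)$ for smooth $X$ independently, via the tower of surjections $\tau(X,\mathcal E_{i+1},e_{i+1})\to\tau(X,\mathcal E_i,e_i)$ of \cite[Proposition~4.17]{MSJETS}, each of which is a torsor under a power of the tangent bundle, and then sums the fibre dimensions. You instead identify the dominant component directly with $B_\ell(V)$ and read off the dimension from Lemma~\ref{goodcomp}. Your route is shorter and entirely legitimate given the statements already available; it effectively merges this Proposition with Corollary~\ref{patchy} into a single argument. The paper's longer computation has the minor advantage of being self-contained within the prolongation formalism (it does not rely on Kolchin's irreducibility theorem, which underlies Lemma~\ref{goodcomp}), but for the purposes of this paper either is fine. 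One small framing issue: you refer to the sentence ``We now show that $B_\ell(V)$ is an irreducible component of $\tau_\ell(V)$'' as something whose proof you may use, but that sentence is precisely announcing the Proposition you are proving, so there is nothing external to cite there---fortunately your argument does not actually rely on it.
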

\begin{proof}
By \cite[Corollary 4.18]{MSJETS}, if $X$ is a smooth irreducible variety over $K$, then $\tau_\ell(X)$ is smooth and irreducible. By \cite[Proposition 4.6]{MSJETS}, if $f: X \rightarrow V$ is an \'etale morphism, then $\tau_\ell f : \tau_\ell X \rightarrow \tau_\ell V$ is \'etale. Take $X$ to be the smooth locus of $V$. Since open immersions are \'etale, it follows that $\tau_\ell(X)\to X$ is the restriction of $\tau_\ell(V)\to V$ to $X$, and thus there is exactly one component of $\tau_\ell V$ which projects dominantly onto $V$. 

For the dimension, we need only calculate the dimension of the fiber of $\pi_\ell: \tau_\ell V \rightarrow V$ over a generic point (or any smooth point) of $V$, because then, by the fiber-dimension theorem, the dimension of the component in question must be the dimension of the fiber plus the dimension of $V$. As above, take $X$ to be the smooth locus of $V$. By \cite[Proposition 4.6]{MSJETS}, it suffices to do the calculation of the dimension of the fiber in $\tau_\ell X$. This reduces the computation to the case when the variety is smooth. Now the dimension calculation essentially follows from \cite[Proposition 4.17 (b)]{MSJETS} which we now explain with some additional detail. For the remainder of the proof we follow the notation and conventions of \cite{MSJETS}.

Let $\mc E$ be the ring scheme which associates to any ring $R$ the ring $$\mc E (R) = R [\epsilon] / (\epsilon)^{\ell+1},$$ 
where $\epsilon=(\epsilon_1,\dots,\epsilon_m)$. Let $e: K \rightarrow \mc E (K) $ be the homomorphism 
$$a\mapsto \sum_{\xi\in \Gamma(\ell)}\frac{1}{\xi_1!\cdots \xi_m!}\delta^{\xi}(a)\epsilon^\xi,$$
where $\epsilon^{\xi}=\epsilon_1^{\xi_1}\cdots\epsilon_m^{\xi_m}$. The maximal ideal $\mf m$ of $\mc E (K)$ is $(\epsilon_1, \ldots , \epsilon_m)$ and the highest power of $\mf m$ which is nonzero is $\mf m ^ {\ell }$. Let $\mc E_ i : = \mc E / \mf m ^{i+1}$. Now, we have the sequence 
$$\xymatrix{ \mc E = \mc E  _ {\ell } \ar[r] ^{\rho_{\ell-1}} & \mc E  _{\ell -2} \ar[r] ^{\rho_{\ell-2}}  & \cdots \ar[r] ^{\rho_{1}}  & \mc E  _1 \ar[r]^{\rho_{0}}  & \mc E _0}$$
where $\rho_i$ denotes the quotient map. Let $e_i : = \rho_i \circ \ldots \circ \rho_{\ell -1} \circ e$. Now, assuming $X$ is a smooth irreducible variety over $K$, we have an induced sequence of surjective morphisms: 
$$\xymatrix{ \tau (X, \mc E , e )  \ar[r]  & \tau (X, \mc E_{\ell -1}  , e_{\ell -2} )  \ar[r]  & \cdots \ar[r]   & \tau (X, \mc E_{1 }  , e_{1} )  \ar[r] & X }.$$ Note that in this particular setting, the map $e_0$ is the identity, so we have replaced $X^{e_0}$ appearing in the diagram of \cite[Proposition 4.17]{MSJETS} with $X$. 

For each $i =0 , \ldots , \ell-1$, $\tau ( X, \mc E _{i+1} , e_{i+1})  \rightarrow  \tau ( X, \mc E _{i} , e_{i})$ is a torsor of the tangent bundle of $X$ to the $m_i^{\text{th}}$ power, where $m_i =\dim_K ( \mf m^{i+1} / \mf m^{i+2})$ \footnote{There is a typo in the definition of $m_i$ in \cite{MSJETS}.} and the dimension is in the sense of $K$-algebras. By the fiber-dimension theorem, $$\dim (\tau ( X, \mc E _{i+1} , e_{i+1})) = \dim (\tau ( X, \mc E _{i} , e_{i})) + m_i \cdot \dim (X).$$ 
Thus, 
$$\dim ( \tau (X, \mc E , e )) =\left(1+ \sum _ {i =0}^{\ell-1} m_i \right)\cdot \dim(X).$$ 
Since dimension of $K$-algebras is additive in sequences of surjective homomorphisms,  we have that $1+\sum _ {i =0}^{\ell-1} m_i = \dim _K \mc  E(K) = \alpha _ \ell$, completing the proof. 
\end{proof}

\begin{corr}\label{patchy} 
If $V\subseteq K^n$ is irreducible, then $B_\ell(V)$ is the unique component of $\tau_\ell(V)$ that projects dominantly onto $V$.
\end{corr}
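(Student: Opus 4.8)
The plan is to combine the previous proposition with the two structural lemmas about $B_\ell$. From the proposition just proved, $\tau_\ell(V)$ has exactly one irreducible component projecting dominantly onto $V$, and that component has dimension $\alpha_\ell\cdot\dim V$. On the other hand, by Lemma \ref{goodcomp}, $B_\ell(V)$ is irreducible of dimension $\alpha_\ell\cdot\dim V$, and since $\nabla_\ell(v)\in\tau_\ell V$ for every $v\in V$ (with $\pi_\ell\circ\nabla_\ell=\mathrm{id}_V$), the subvariety $B_\ell(V)\subseteq\tau_\ell(V)$ projects \emph{dominantly} — indeed surjectively — onto $V$. So it suffices to show $B_\ell(V)$ is actually a component of $\tau_\ell(V)$, since then it must be \emph{the} unique dominant one.

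First I would record that $B_\ell(V)$ is contained in some irreducible component $C$ of $\tau_\ell(V)$, and that $C$ projects dominantly onto $V$ (because $B_\ell(V)$ already does, and $\pi_\ell(C)\supseteq\pi_\ell(B_\ell(V))=V$). By the proposition, $C$ is the unique such component and $\dim C=\alpha_\ell\cdot\dim V$. But $\dim B_\ell(V)=\alpha_\ell\cdot\dim V=\dim C$ by Lemma \ref{goodcomp}, and $B_\ell(V)\subseteq C$ with both irreducible of the same dimension forces $B_\ell(V)=C$. Hence $B_\ell(V)$ is the unique component of $\tau_\ell(V)$ projecting dominantly onto $V$.

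There is essentially no obstacle here; the real content has already been extracted in the preceding proposition (the smooth-locus/\'etale reduction to the jet-space dimension count) and in Lemma \ref{goodcomp} (Kolchin irreducibility and the differential transcendence degree computation). The only point requiring a word of care is the claim that $B_\ell(V)$ is contained in a \emph{single} component of $\tau_\ell(V)$: this is automatic since $B_\ell(V)$ is irreducible, so it cannot be spread across two components without lying in their intersection, which is lower-dimensional. If one wishes to avoid even invoking the full proposition, one could instead argue directly via the smooth locus $X$ of $V$: over $X$ one has $\tau_\ell(X)=B_\ell(X)$ by \cite[\S4.3]{MSJETS}, and $B_\ell(X)$ is Zariski-dense in $B_\ell(V)$ while $\tau_\ell(X)$ is open in the dominant component of $\tau_\ell(V)$; taking closures gives the identification. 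But the cleanest route is simply the dimension-comparison argument above, so that is what I would write.
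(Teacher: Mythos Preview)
Your argument is correct and is precisely the paper's approach: use Lemma~\ref{goodcomp} to see that $B_\ell(V)$ is an irreducible subvariety of $\tau_\ell(V)$ projecting dominantly onto $V$ with dimension $\alpha_\ell\cdot\dim V$, then invoke the preceding proposition to conclude it equals the unique dominant component. The paper compresses this into a single sentence, but the content is identical.
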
 
\begin{proof} 
The irreducible subvariety $B_\ell(V)$ of $\tau_\ell(V)$ projects dominantly onto $V$ and, by Lemma \ref{goodcomp}, has the same dimension as the unique component of $\tau_\ell V$ with this property. 
\end{proof} 

Proposition \ref{prope}, together with Lemma \ref{irrb} and Corollary \ref{patchy}, show that if $H$ is a hypersurface  of $K^n$, then $\deg B_\ell(H)\leq (\deg H)^{\alpha_\ell}$. The next lemma shows that this degree bound holds for arbitrary affine varieties \footnote{The first author thanks Tom Scanlon for discussions which led to some of the ideas here. Both authors thank Gal Binyamini and the anonymous referee for pointing out a gap in the original argument of a preliminary version of this paper.}. 

For convenience of notation, we assume that our varieties are defined over a differentially closed subfield $F$ of $K$ such that $(K,\Delta)$ is universal over $(F,\Delta)$; that is, $K$ contains a differential generic point for each Kolchin closed set defined over $F$. Alternatively, one could also work in a differentially closed field universal over $K$. 

Let $(a_{i,j}) _{1\leq i\leq d+1,\, 1\leq j\leq n}$ be a tuple of independent transcendentals over $F$. We will write $\bar a$ to refer to the entire tuple of elements. Consider the map $\phi_{\bar a }: K^n \rightarrow K^{d+1}$ given by $$y = (y_1, \ldots , y_n ) \mapsto \left(\sum _{j=1}^n a_{1,j} y_j , \ldots , \sum _{j=1}^n  a_{d+1,j}  y_j \right).$$ We will call such a map an \emph{$F$-generic linear map}.

\begin{lem} \label{degreehyp}  Let $V \subseteq K^n$ be an irreducible variety of dimension $d<n-1$. Let $\bar a$ be a tuple of independent $\Delta$-transcendentals over $F$, and $\phi_{\bar a}$ be as above. Then, for all $\ell \in \m N$, $\tau _\ell \phi _{\bar a}$ is a degree preserving birational map between $B_\ell (V) $ and $ B_\ell(H)$ where $H $ is the Zariski closure of $ \phi_{\bar a} (V) \subseteq K^{d+1}$. 
\end{lem}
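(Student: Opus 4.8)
The plan is to show that for a generic linear projection $\phi_{\bar a}: K^n \to K^{d+1}$, the restriction to $V$ is birational onto its image $H$, that $H$ is a hypersurface in $K^{d+1}$, and that this birationality is inherited by the prolongations in a degree-preserving way. First I would recall the classical fact that a sufficiently generic linear projection from $K^n$ to $K^{d+1}$ (with $d = \dim V$) restricts to a birational morphism $V \to H$ onto a hypersurface $H \subseteq K^{d+1}$, and moreover preserves degree, i.e. $\deg H = \deg V$; this is standard (it is essentially the classical construction of a primitive element / Noether normalization into codimension one, and the degree statement follows because a generic line in $K^{d+1}$ meets $H$ in $\deg V$ points, each with a unique preimage in $V$). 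Working over the differentially closed field $F$ with $(K,\Delta)$ universal over $F$, genericity of the coefficients $\bar a$ (even just algebraic genericity) suffices for this.

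Next I would pass to the prolongations. By functoriality of $\tau_\ell$ (discussed after Definition \ref{defpro}), the morphism $\phi_{\bar a}: V \to H$ induces $\tau_\ell \phi_{\bar a}: \tau_\ell V \to \tau_\ell H$, compatible with the nabla maps: $\tau_\ell\phi_{\bar a}\circ \nabla_\ell = \nabla_\ell \circ \phi_{\bar a}$ on $K$-points. Since $B_\ell(V)$ is by definition the Zariski closure of $\nabla_\ell(V(K))$ and $B_\ell(H)$ the Zariski closure of $\nabla_\ell(H(K))$, the map $\tau_\ell\phi_{\bar a}$ carries $B_\ell(V)$ into $B_\ell(H)$; call this restriction $\psi$. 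To see $\psi$ is birational, I would use that a birational map $\phi_{\bar a}: V \to H$ is an isomorphism over a dense open $U \subseteq V$ with dense open image $U' \subseteq H$; by \cite[Proposition 4.6]{MSJETS} (compatibility of $\tau_\ell$ with étale — in particular open — morphisms), $\tau_\ell U \to \tau_\ell U'$ is an isomorphism, and $\nabla_\ell(U(K))$ is dense in $B_\ell(V)$ while $\nabla_\ell(U'(K))$ is dense in $B_\ell(H)$ (densifying $U$ in $V$ for the Kolchin topology, using Kolchin irreducibility as in Lemma \ref{goodcomp}). Hence $\psi$ restricts to an isomorphism between dense open subsets, so it is birational between the irreducible varieties $B_\ell(V)$ and $B_\ell(H)$.

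Finally, for the degree statement I would argue that $\psi: B_\ell(V) \to B_\ell(H)$ is degree-preserving. One direction is immediate from Fact \ref{Bezout}(1): since $\psi$ is (a restriction of) a linear map, $\deg B_\ell(H) = \deg \psi(B_\ell(V)) \leq \deg B_\ell(V)$. For the reverse inequality I would observe that, because $\tau_\ell\phi_{\bar a}$ is itself a linear map $K^{n\cdot\alpha_\ell} \to K^{(d+1)\cdot\alpha_\ell}$ whose coefficients are (images under $e$ of) the generic $a_{i,j}$, the restriction $\psi$ is a generic linear projection on $B_\ell(V)$ in the appropriate sense; since $\psi$ is generically injective, a generic linear section of $B_\ell(H)$ of complementary dimension pulls back to a linear section of $B_\ell(V)$ of the same cardinality, giving $\deg B_\ell(V) \leq \deg B_\ell(H)$. (Here one uses that $\dim B_\ell(V) = \dim B_\ell(H) = \alpha_\ell d$ by Lemma \ref{goodcomp}, and that the defining linear forms of $\tau_\ell\phi_{\bar a}$ are generic enough to compute degree.) Combining the two inequalities yields $\deg B_\ell(V) = \deg B_\ell(H)$, so $\tau_\ell\phi_{\bar a}$ is degree-preserving and birational between $B_\ell(V)$ and $B_\ell(H)$.

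The main obstacle I anticipate is the degree-preservation in the prolongation, specifically verifying that $\tau_\ell\phi_{\bar a}$ is "generic enough" as a linear map on $B_\ell(V) \subseteq \tau_\ell V$ to be used for a degree computation — the coefficients of $\tau_\ell\phi_{\bar a}$ are not independent transcendentals but involve the $\delta^\xi(a_{i,j})$, so one must check that this constrained family of linear maps still includes maps that are in "general position" relative to $B_\ell(V)$. This is exactly the kind of point where the hypothesis that the $a_{i,j}$ are independent $\Delta$-transcendentals (rather than merely algebraically independent) is needed, and where care is required; I would handle it by a dimension/genericity count showing that the locus of "bad" linear maps in the relevant parameter space is proper, so that the $\Delta$-generic choice avoids it.
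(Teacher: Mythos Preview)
Your birationality argument is sound and takes a cleaner, more functorial route than the paper: you deduce birationality of $B_\ell(V)\to B_\ell(H)$ from birationality of $V\to H$ via functoriality of $\tau_\ell$ on open immersions (and smoothness), whereas the paper works directly at the level of function fields of $B_\ell(V)$ and $B_\ell(H)$, using an explicit primitive element argument to show that the images $\tau_\ell\phi_{\bar a}^*(x_{d+1,\theta})$ generate $K(B_\ell(V))$ over a transcendence basis. Both work; yours is shorter and more conceptual.

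The degree argument, however, has a genuine gap, and you have correctly located it but not closed it. Fact~\ref{Bezout}(1) gives $\deg B_\ell(H)\le \deg B_\ell(V)$. For the reverse you pull back a generic linear section of $B_\ell(H)$, but the resulting section of $B_\ell(V)$ is \emph{not} generic: every preimage $\psi^{-1}(L)$ contains a translate of $\ker(\tau_\ell\phi_{\bar a})$, and that kernel is highly constrained---the matrix of $\tau_\ell\phi_{\bar a}$ is block lower-triangular (zero in the $(i,\xi),(j,\zeta)$ entry unless $\zeta\le\xi$) with diagonal blocks all equal to $(a_{i,j})$, so even with $\bar a$ a tuple of $\Delta$-transcendentals the map is far from a generic linear projection of $K^{n\alpha_\ell}$. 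A dimension count on the space of \emph{all} linear maps therefore says nothing about whether this particular constrained family avoids the bad locus relative to $B_\ell(V)$; a specific linear section with $\deg B_\ell(H)$ points only yields $\deg B_\ell(V)\ge \deg B_\ell(H)$, which is the direction you already have.

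The paper sidesteps this entirely by a different device: compose with a \emph{further} $F\langle\bar a\rangle$-generic linear map $\phi_{\bar b}:K^{(d+1)\alpha_\ell}\to K^{d\alpha_\ell}$ so that both $\phi_{\bar b}:B_\ell(H)\to K^{d\alpha_\ell}$ and $\phi_{\bar b}\circ\tau_\ell\phi_{\bar a}:B_\ell(V)\to K^{d\alpha_\ell}$ are finite surjective (Noether normalizations). Then one invokes the identity $\deg X=[K(X):K(\mathbb A^{d\alpha_\ell})]$ for each, and since $\tau_\ell\phi_{\bar a}^*$ is an isomorphism of function fields commuting with these extensions, the two field degrees---hence the two varietal degrees---agree. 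This avoids any need to argue that $\tau_\ell\phi_{\bar a}$ itself is in general position.
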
 

\begin{proof} 
Let $\Theta(\ell)=\{\delta_1^{k_1}\cdots\delta_m^{k_m}:\, k_1+\cdot+k_m\leq \ell\}$. We enumerate the coordinate functions of $B_ \ell (H)$ by $\{x_ {i, \theta } : \, 1\leq i \leq d+1, \, \theta \in \Theta (\ell ) \}$, and the coordinate functions of $B_ \ell (V)$ by $\{y_ {i, \theta } : \, 1\leq i\leq n, \, \theta \in \Theta (\ell ) \}$.  When $\theta$ is the trivial differential operator we simply write $x_i$ for the coordinate on $H$ or $B_ \ell (H)$.

Among the coordinate functions of $H$, $x_1, \ldots , x_d$ are algebraically independent and $x_{d+1}$ is algebraically dependent over $x_1, \ldots , x_d$. Thus, $\{x_ {i, \theta } : \, 1\leq i\leq d, \, \theta \in \Theta (\ell ) \}$ is a transcendence basis for the function field $K (B_ \ell (H) )$. Let $\tau _ \ell \phi_{\bar a} ^* : K (B_ \ell (H) ) \rightarrow K (B_ \ell (V) )$ be the embedding of fields induced by $\tau _ \ell \phi _ { \bar a }$. Then $\{ \tau _ \ell \phi_{\bar a}^* (x_ {i, \theta } ) : \, 1\leq i\leq d, \, \theta \in \Theta (\ell ) \}$ is a transcendence basis for $K (B_  \ell (V))$ (this follows from the genericity of $\bar a$ and Lemma \ref{goodcomp}). 

Let $\preceq$ be the partial order on $\Theta (\ell)$ given by $\delta_1 ^{n_1} \cdots \delta_m ^{n_m} \preceq \delta_1 ^{k_1} \cdots \delta_m ^{k_m}$ if and only if $n_i \leq k_i$ for each $i=1, \ldots , m$. Now fix $\theta _1 \in \Theta ( \ell ) \setminus \Theta ( \ell -1) $, and define $K_ {\theta _1}$ to be the field generated by $\{y_ {i, \theta }  : \; 1\leq i\leq n, \, \theta \in \Theta (\ell ) \text{ and } \theta \preceq \theta _1\}$ over $K (\tau _ \ell \phi_{\bar a}^* (x_ {i, \theta } ) : \, 1\leq i\leq d, \, \theta \in \Theta (\ell ) ) . $ By construction, the field generated by the union of the fields $K_{\theta _1}$ as $\theta _1 $ ranges over $\Theta ( \ell ) \setminus \Theta (\ell -1)$ is all of $K ( B_ \ell (V))$. Now, for each $\theta_1 \in  \Theta ( \ell ) \setminus \Theta ( \ell -1)$, because $a_{d+1, 1}, \ldots , a_{d+1, n}$ are independent $\Delta$-transcendentals over $F \langle a_ {i,j} :\, 1\leq i\leq d, \, 1\leq j\leq n\rangle$, the function $\tau _\ell \phi _{ \bar a } ^* (x_{d+1, \theta_1 })$ is a generator of $K_{\theta _1 }$ over $K (\tau _ \ell \phi_{\bar a}^* (x_ {i, \theta } ): \, 1\leq i\leq d, \, \theta \in \Theta (\ell ))$. This follows by the primitive element theorem \cite[Chapter II, Theorem 19]{ZariskiSamuel} because $x_{d+1, \theta_1 }$ is a linear combination of the generators of $K_{\theta_1}$ over $K (\tau _ \ell \phi_{\bar a}^* (x_ {i, \theta } ) : \, 1\leq i\leq d, \, \theta \in \Theta (\ell ))$ which is generic over $F \langle a_ {i,j}:\, 1\leq i\leq d, \, 1\leq j\leq n\rangle $.

As the union of the subfields $K_ {\theta _1}$ generates $K ( B_ \ell (V))$ and each of these is contained in the image of $K ( B_ \ell ( H))$ under $\tau _ \ell \phi _{\bar a } ^* $, we must have $K ( B_ \ell (V)) \cong K ( B_ \ell ( H))$, establishing the birationality of the varieties. 

We now show that $\deg (B_{\ell} (V)) = \deg (B_ \ell (H))$. Let $\phi _ {\bar b}: B_ \ell (H) \rightarrow K^{d \cdot \alpha _ \ell }$ be the $F \langle \bar a \rangle $-generic linear map induced by a tuple $\bar b$ of $F \langle \bar a \rangle $-transcendentals. Then we claim that $\phi_{\bar b} : B_ \ell (H)  \rightarrow K^{d \cdot \alpha _ \ell }$ and $\phi_{\bar b } \circ \tau_\ell \phi _{\bar a}: B_\ell(V) \rightarrow K^{d \cdot \alpha _ \ell }$ are finite surjective morphisms. To see this, note that any sufficiently general linear projection to $ K^{d \cdot \alpha _ \ell }$ from a variety of dimension $d \cdot \alpha _ \ell $ satisfies the conditions for Noether normalization; indeed, as noted in the proof of Theorem 13.3 of \cite{eisenbud1995commutative} and the ensuing discussion on page 284, the generators in the proof of Noether normalization can be chosen to be linear combinations of the generators of the ring and in fact any suitably general linear combination will work. The linear combinations in our maps are generic by the independence of $\bar b$ over $K \langle \bar a \rangle$. 

Now, by \cite[Proposition 8.3~(1)]{Harm} \footnote{In the notation of \cite[Proposition 8.3]{Harm}, the degree of the morphism $\phi |_X $ refers to the number of points in a generic fiber of the map; of course, this number is equal to the degree of the induced extension of function fields.}, we have $$\deg (B_{\ell} (V)) = [K(B_ \ell (V)) : K (\m A ^{d \cdot \alpha _ \ell })] $$ and $$\deg (B_ \ell (H)) = [K(B_ \ell (H)) : K (\m A ^{d \cdot \alpha _ \ell} )]. $$ But $K(B_ \ell (V)) \cong K(B_ \ell (H))$, and the degree of a field extension is an isomorphism invariant, so $\deg (B_{\ell} (V)) = \deg (B_ \ell (H))$. 
\end{proof}

The above lemma yields the desired bound:
$$\deg B_\ell (V) \leq  (\deg V) ^ {\alpha_\ell}.$$

Let us remark that we have \emph{not} yet established any degree bound for $\tau _ \ell (V)$ for a general affine variety $V$ (we have only covered the hypersurface case in Lemma \ref{prope}). The argument in Proposition \ref{degreehyp} for the degree bound of $B_\ell(V)$ does not adapt to $\tau _ \ell(V)$ in general. The problem with the argument is that for a hypersurface $H$ of $K^{\dim V +1}$, the dimension of $\tau_ \ell(H)$ can be at most $\alpha_\ell(\dim V+1)$. Such restriction on the growth of the dimension precludes the existence of a map $\phi _ {\bar a}$ as in Proposition \ref{degreehyp} for which the induced map on prolongations is birational on each component, since it would imply that $\dim \tau_\ell(V)$ is bounded by $\alpha_\ell(\dim V+1)$ which is not true for general varieties. For example, in the ordinary case $(K,\delta)$, consider the curve $C \subseteq K^3$ whose ideal has generators $2x^7 + y^7+z^7$ and $x^4 y^4 + y^4 z^4 + x^4 z^4 $. In this case, $\tau _ 3 (C)$ has dimension at least $9$, because over the point $(0,0,0) \in C$ the fiber of $\tau _3 (C)$ is a copy of $K^9$ (in fact, $\tau _3 (C)$ has one component of dimension $8$ and one of dimension $9$); but $\alpha_3(\dim C+1)=8$. So, the line of reasoning from Lemma \ref{degreehyp} can not be seamlessly extended to yield that $(\deg V)^{\alpha_\ell}$ is an upper bound for the degree of $\tau_\ell (V)$. There is, however, an upper bound for the degree of $\tau_\ell(V)$ that depends only on $n$, $\alpha_\ell$ and $\deg V$, it is just not as practically useful as the one for the degree of $B_\ell(V)$:

\begin{prop}
Let $V\subseteq K^n$ be a closed subvariety and $\ell\in\m N$. There is a positive integer $D=D(n,\deg V)$ such that $\deg \tau_\ell(V)\leq D^{n\cdot \alpha_\ell+1}$.
\end{prop}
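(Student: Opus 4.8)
The plan is to bound $\deg\tau_\ell(V)$ by reducing to a situation where the defining equations are explicitly controlled, namely by embedding $V$ as (an irreducible component of) a complete intersection and applying Lemma \ref{prope} together with Bezout's inequality (Fact \ref{Bezout}). First I would reduce to the case when $V$ is irreducible: by the analogue of Lemma \ref{irrb} for prolongations (or directly, since $\tau_\ell$ commutes with finite unions and $\deg$ is additive over components) it suffices to prove the bound for an irreducible $V$, with the loss absorbed into the dependence on $\deg V$. So assume $V\subseteq K^n$ is irreducible of dimension $d$ and degree $e=\deg V$. The key classical input is that $V$ can be cut out, set-theoretically, by $n$ hypersurfaces $H_1,\dots,H_n$ each of degree at most $e$ (for instance, take generic $K$-linear combinations of a generating set of the ideal $\mathcal I(V)$, all of whose elements may be taken of degree $\le e$; more carefully one uses that the ideal of $V$ is generated in degree $\le e$, or invokes the effective Nullstellensatz / the fact that $V$ is a component of an intersection of $n$ hypersurfaces of degree $\le \deg V$). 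Then $V$ is a union of irreducible components of $H_1\cap\cdots\cap H_n$.

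Next I would apply the prolongation functor. Since $V$ is a component of $H_1\cap\cdots\cap H_n$, I claim $\tau_\ell(V)$ is contained in $\tau_\ell(H_1)\cap\cdots\cap\tau_\ell(H_n)$: indeed each $\tau_\ell$ is defined by the equations $f^\xi=0$ coming from the defining equations $f$, so if $g$ vanishes on $H_i$ for all $i$ then... more precisely, $V\subseteq H_i$ gives $\tau_\ell(V)\subseteq\tau_\ell(H_i)$ because the $\ell$-th prolongation is monotone under inclusion of varieties (this follows from the equational description in (\ref{eq11}): the defining ideal of $\tau_\ell V$ contains that of $\tau_\ell H_i$). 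Hence $\tau_\ell(V)\subseteq \tau_\ell(H_1)\cap\cdots\cap\tau_\ell(H_n)$. By Lemma \ref{prope}, $\deg\tau_\ell(H_i)\le e^{\alpha_\ell}$ for each $i$, and by Bezout's inequality applied $n-1$ times inside the ambient space $K^{n\cdot\alpha_\ell}$,
$$\deg\big(\tau_\ell(H_1)\cap\cdots\cap\tau_\ell(H_n)\big)\le \prod_{i=1}^n\deg\tau_\ell(H_i)\le e^{\,n\cdot\alpha_\ell}.$$
Since $\tau_\ell(V)$ is a union of components of this intersection, $\deg\tau_\ell(V)\le e^{\,n\cdot\alpha_\ell}$, which is of the desired shape $D^{n\cdot\alpha_\ell+1}$ with $D=D(n,\deg V)$ (one can absorb the reduction-to-irreducible step and the $+1$ in the exponent into the choice of $D$, e.g. $D=(\deg V)^{2}$ or a similarly crude expression depending only on $n$ and $\deg V$).

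The main obstacle I anticipate is the very first reduction: controlling the number and degrees of hypersurfaces needed to carve out $V$ purely in terms of $n$ and $\deg V$. The cleanest route is the classical fact that any variety of degree $e$ in $K^n$ is an irreducible component of an intersection of at most $n$ hypersurfaces of degree at most $e$ — this is exactly the set-up that makes Bezout-type degree bounds work and is available in Heintz \cite{Heintz} (and underlies the very Bezout inequality we cite). If one wants to be maximally careful, one notes that generic linear combinations $g_1,\dots,g_n$ of generators of $\mathcal I(V)$ define a variety whose components all have dimension $\ge d$ and which contains $V$ as a component of the right dimension; a dimension count then shows $V$ is a component of $V(g_1,\dots,g_n)$, and the $g_i$ have degree bounded by the maximal degree of a generator, which for a radical ideal of a degree-$e$ variety is at most $e$. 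Everything after this reduction is formal: monotonicity of $\tau_\ell$ under inclusions (immediate from (\ref{eq11})), Lemma \ref{prope} for the hypersurface degree bound, and iterated Bezout. Because the statement only asks for \emph{some} $D=D(n,\deg V)$ rather than a sharp constant, I would not optimize and would simply record $\deg\tau_\ell(V)\le(\deg V)^{n\cdot\alpha_\ell}\le D^{n\cdot\alpha_\ell+1}$ for a crude $D$ depending only on $n$ and $\deg V$.
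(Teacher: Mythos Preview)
Your argument has a genuine gap at the step ``since $\tau_\ell(V)$ is a union of components of this intersection.'' The description in (\ref{eq11}) builds $\tau_\ell(V)$ from \emph{generators of the ideal} $\mathcal I(V)$, not from an arbitrary family of polynomials whose zero locus is $V$. If $H_1,\dots,H_n$ only cut out $V$ set-theoretically, then $\bigcap_i\tau_\ell(H_i)$ is the prolongation of the possibly non-reduced scheme defined by $(f_1,\dots,f_n)$, and $\tau_\ell(V)$ can sit properly inside one of its components. For a concrete illustration (with $m=1$), take $V=\{x=y=0\}\subset K^3$, $H_1:x=0$, $H_2:y^2-x=0$; then $H_1\cap H_2=V$ as sets, but $\tau_1(H_1)\cap\tau_1(H_2)=\{x=y=x'=0\}$ is an irreducible $3$-plane in $K^6$ while $\tau_1(V)=\{x=y=x'=y'=0\}$ is a $2$-plane strictly inside it. The bare inclusion $\tau_\ell(V)\subseteq\bigcap_i\tau_\ell(H_i)$ thus carries no degree information. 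Your parenthetical fix, ``the ideal of $V$ is generated in degree $\le e$,'' would close the gap if true, but it is false in general; bounding the degrees of generators of $\mathcal I(V)=\sqrt{(f_1,\dots,f_s)}$ in terms of $n$ and $\deg V$ is precisely the effective-radical input that is needed, and this is the actual source of the constant $D$.

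This is exactly how the paper proceeds: it uses Heintz to write $\mathcal I(V)$ as the radical of an ideal generated in degree $\le\deg V$, then invokes an effective radical bound to obtain honest generators $g_1,\dots,g_r$ of $\mathcal I(V)$ of degree $\le D=D(n,\deg V)$. Now $\tau_\ell(V)$ is \emph{exactly} the common zero set of the $g_i^\xi$, each of degree $\le D$; Kronecker's theorem replaces these by $n\alpha_\ell+1$ polynomials of degree $\le D$ with the same zero set, and Bezout's inequality finishes. (A smaller issue: your reduction to irreducible $V$ via ``$\tau_\ell$ commutes with finite unions'' also fails---for $V=\{x=0\}\cup\{y=0\}\subset K^2$ one checks that $\tau_1(V)$ has a third component $\{x=y=0\}$ beyond $\tau_1(\{x=0\})\cup\tau_1(\{y=0\})$; the paper sidesteps this by not reducing to the irreducible case.)
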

\begin{proof}
Let $D$ be a positive integer such that if $f_1,\dots,f_s\in K[x_1,\dots,x_{n}]$ are of degree at most $\deg V$ then $\sqrt{f_1,\dots,f_s}$ is generated by polynomials of degree at most $D$ (it is well known that such a $D$ exists and that it only depends on $n$ and $\deg V$, see for instance \cite{vandenDriesbounds} or \cite{Laplagne}). By Proposition 3 of \cite{Heintz}, there are polynomials $f_1,\dots,f_s$ of degree at most $\deg V$ such that the ideal of $V$ over $K$ is given by $\mathcal I(V/K)=\sqrt{f_1,\dots,f_s}$. By the choice of $D$, there are polynomials $g_1,\dots,g_r$ of degree at most $D$ such that $\mathcal I(V/K)=(g_1,\dots,g_r)$. The prolongation $\tau_\ell(V)$ is then given by the zero set  of $g_i^\xi=0$ as $\xi$ ranges in $\Gamma(\ell)$ and $i=1,\dots,r$ (see (\ref{eq11}) and the discussion after). Note that each $g_i^\xi$ has degree at most $D$. Now, by Kronecker's theorem (see \cite[Chap. VII, \S17]{Ritt}), there are polynomials $(h_i)_{i=1}^{n\cdot\alpha_\ell+1}$ of degree at most $D$ such that $\sqrt{(g_i^\xi)_{1\leq i\leq r, \xi\in \Gamma(\ell)}}=\sqrt{(h_i)_{i=1}^{n\cdot\alpha_\ell +1}}$. Finally, by Bezout's inequality, we have that
$$\deg \tau_\ell(V)\leq \prod_{i=1}^{n\cdot\alpha_\ell +1} \deg h_i\leq D^{n\cdot\alpha_\ell +1}.$$
\end{proof}

\section{The burden of commutativity}\label{bur}

In this section we discuss the proper setup to prove (in Proposition \ref{goodfact}) the analogue of the following property of ordinary differentially closed fields in our setting with finitely many commuting derivations. As we mentioned in the introduction, this property is at the heart of the proof of effective uniform bounding for ordinary differential fields (cf. Fact 3.7 of \cite{HPnfcp}).

\begin{fact}\label{ordi}
Suppose $(K,\delta)$ is differentially closed. If $V\subseteq K^n$ is an irreducible subvariety and $W$ is a subvariety of $K^{2n}$ such that $W\cap B_1(V)$ projects dominantly onto $V$, then for any nonempty Zariski open subset $U$ of $V$ there is $v\in U$ such that $(v,\delta v)\in W$.
\end{fact}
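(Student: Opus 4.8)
The plan is to deduce the statement from the model-completeness of $\operatorname{DCF}_0$. A nonempty Zariski open $U\subseteq V$ is the complement in $V$ of a proper closed subvariety $V\setminus U$, so it suffices to produce, in \emph{some} differential field $(L,\delta)$ extending $(K,\delta)$, a point $v\in L^n$ that lies on $V$ but on no proper closed subvariety of $V$ defined over $K$ and satisfies $(v,\delta v)\in W$. Granting this, the sentence $\exists x\,(x\in V\wedge x\notin V\setminus U\wedge \nabla(x)\in W)$ --- which has parameters in $K$ --- holds in a differentially closed field extending $(K,\delta)$, hence by model-completeness in $(K,\delta)$ itself, and this is exactly the existence of a $v\in U$ with $(v,\delta v)\in W$.

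To build such an $(L,\delta)$, first choose an irreducible component $Z$ of $W\cap B_1(V)$ that projects dominantly onto $V$; one exists since $W\cap B_1(V)$ does, and $Z$ is defined over $K$ because $K$ is algebraically closed. Let $(c_0,c_1)$ be a Zariski generic point of $Z$ over $K$, chosen in a large algebraically closed field $\Omega\supseteq K$. Dominance of $Z\to V$ forces $c_0$ to be a generic point of $V$ over $K$, so $c_0$ lies on no proper $K$-closed subvariety of $V$; and $(c_0,c_1)\in Z\subseteq B_1(V)$.

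The crux is to extend the derivation from $K$ to $L:=K(c_0,c_1)^{\mathrm{alg}}$ so that $\delta c_0=c_1$. By a standard criterion for extending derivations along field extensions (valid in characteristic zero), this is possible precisely when, for every polynomial $p$ over $K$ vanishing on $V$, the first-order total derivative $\delta p:=p^{\delta}+\sum_i \frac{\partial p}{\partial x_i}\,\delta x_i$ satisfies $(\delta p)(c_0,c_1)=0$, i.e.\ vanishes when $x$ is set to $c_0$ and $\delta x$ to $c_1$. This is where $B_1(V)$ enters: for such a $p$, the polynomial obtained from $\delta p$ by renaming each $\delta x_i$ as a new coordinate vanishes on $B_1(V)$, since it vanishes at $(w,\delta w)$ for $w$ a differential generic point of $V$ over $K$ (apply $\delta$ to $p(w)=0$) and $(w,\delta w)$ is, by Lemma \ref{goodcomp} and its proof, a Zariski generic point of the irreducible variety $B_1(V)$. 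As $(c_0,c_1)\in B_1(V)$, all the required identities hold; the derivation extends, and $v:=c_0$ satisfies $(v,\delta v)=(c_0,c_1)\in Z\subseteq W$ together with the genericity in $V$, completing the construction.

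The delicate point is precisely the passage from $(c_0,c_1)\in B_1(V)$ to the derivation-extension criterion --- that the renamed first-order total derivatives of the polynomials vanishing on $V$ belong to the ideal of $B_1(V)$ --- which rests on the irreducibility and dimension count of Lemma \ref{goodcomp}, ultimately Kolchin's irreducibility theorem. The remaining steps, the model-completeness reduction and the classical extension-of-derivations criterion, are routine. It is exactly this crux that breaks down for $m\geq 2$: renaming the first-order total derivatives no longer captures enough of the ideal of $B_1(V)$, because the integrability conditions imposed by commutativity yield further relations visible only after differentiating additional times --- the obstruction that Section \ref{bur} is designed to overcome on the way to Proposition \ref{goodfact}.
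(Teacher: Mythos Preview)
Your argument is correct and matches the paper's approach. The paper does not give a self-contained proof of Fact~\ref{ordi} (it is cited from \cite{HPnfcp}), but its informal discussion after Example~\ref{example1} and the proof of the generalization Proposition~\ref{goodfact} specialized to $m=1$ (so $T=1$) proceed exactly as you do: take a generic point of an irreducible component of $W\cap B_1(V)$ dominating $V$, verify the derivation-extension criterion via membership in $B_1(V)\subseteq\tau_1 V$, build the compatible differential extension, and invoke differential closedness (your model-completeness step) to descend to $K$.
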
 

\begin{rem}
This fact seems to have been the original motivation for the development of the so-called geometric axioms for ordinary differentially closed fields \cite{PiercePillay}.
\end{rem}

The situation is very different in the setting of partial differential equations. For instance, differential-algebraic varieties of the form $\{v\in V:\nabla(v)\in W\}$, where $V$ and $W$ are algebraic varieties, might be finite (even empty) even when $W\cap B_1(V)$ projects dominantly onto $V$. These situations can be witnessed in basic examples like the following:

\begin{exam}\label{example1}
Suppose $(K,\delta_1,\delta_2)$ is differentially closed. Let $V=K^1$, then $B_1(V)=K^3$. Let $b_1,b_2\in K$ be such that $\delta_2(b_1)=0$ and $\delta_1(b_2)\neq 0$. If we let 
$$W:=K\times\{b_1\}\times\{b_2\},$$
then clearly $W\subset B_1(V)$ projects dominantly onto $V$. In this case we have that $\{v\in V:\nabla(v)\in W\}=\emptyset$. Indeed, if $v\in V$ is such that $\nabla(v)\in W$, then $\delta_1(v)=b_1$ and $\delta_2(v)=b_2$ and so (by the assumptions on $b_1$ and $b_2$)
$$0=\delta_2(\delta_1(v))=\delta_1(\delta_2(v))\neq 0.$$
\end{exam} 

Let us explain the root of the problem. Consider a differential field $(F,\delta_1,\ldots,\delta_m)$ and a tuple $(a,b_1,\ldots,b_m)$ of some field extension of $F$. We would like to know if there exists a differential field extension $(M,D_1,\ldots,D_m)$ of $F$ such that $D_ia=b_i$ for $i=1,\ldots,m$. A necessary condition for such an extension to exist is that
\begin{equation}\label{niceq}
\frac{df}{dx}(a)\, b_i+f^{\delta_i}(a)=0, \quad i=1,\ldots,m
\end{equation}
for all $f \in F[x]$ vanishing at $a$. In fact, by Theorem 5.1 in Chap. 7 of \cite{LangBook}, if the above equations are satisfied we obtain derivations $D_i:F(a)\to F(a,b_i)$ extending $\delta_i$ such that $D_i(a)=b_i$. So, let us assume that the tuple $(a,\bar b)=(a,b_1,\ldots,b_m)$ satisfies (\ref{niceq}). 

Now, in order to build the desired differential field extension one needs to find a tuple $(c_{i,j})_{1\leq i,j\leq m}$ satisfying
\begin{equation}\label{niceq2}
\frac{\partial f}{\partial x}(a,\bar b)b_i + \sum_{j=1}^m\frac{\partial f}{\partial y_j}(a,\bar b)c_{i,j}+ f^{\delta_i}(a,\bar b)=0, \quad i=1,\ldots,m
\end{equation}
for all $f\in F[x,y_1,\ldots,y_m]$ vanishing at $(a,\bar b)$, and, since we want the derivations to commute, we also require the following \emph{integrability equations}
$$c_{i,j} =c_{j,i} \quad 1\leq i,j\leq m.$$
Indeed, if there were such a tuple of $c_{i,j}$'s, by the same token as above, we would obtain derivations $D_i:F(a,\bar b)\to F(a,\bar b, (c_{i,j})_{1\leq i,j\leq m})$ extending $\delta_i$ such that $D_i(a)=b_i$ and $D_i(b_j)=c_{i,j}=c_{j,i}=D_j(b_i)$.

In the ordinary case, there are no integrability equations and one can show, rather easily, that there exists such a $c_{1,1}$ satisfying (\ref{niceq2}). Hence, in this case, by only assuming (\ref{niceq}), this process yields the desired extension. However, in the case of finitely many commuting derivations, the complete system (i.e., (\ref{niceq2}) together with the integrability equations) might be inconsistent and so no such differential field extension would exist. The issue is that the integrability equations are not in general implied by (\ref{niceq}). Therefore, the additional relations imposed by the commutativity of the derivations must be taken into account in order to prove a proper analogue of Fact \ref{ordi}. To do this we make use of results from \cite{Pierce2014fields} on the axioms of differentially closed fields with several commuting derivations. We first recall some of the terminology of that paper.

Let $(F,\delta_1,\ldots,\delta_m)$ be a differential field and fix $n\in\mathbb N$. We will consider the partial order $\leq$ defined on $\mathbb N^m\times n$ by $(\xi, i) \leq (\zeta, j)$ if and only if $i=j$ and $\xi$ is less than or equal to $\zeta$ in the product order of $\mathbb N^{m}$. Note that if $x=(x_1,\ldots,x_{n})$ are differential indeterminates and if we identify $(\xi,i)$ with $\delta^\xi x_{i+1}:=\delta_m^{\xi_m}\cdots\delta_1^{\xi_1}x_{i+1}$, then $\leq$ induces an order on the algebraic indeterminates given by $\delta^\xi x_i\leq \delta^\zeta x_j$ iff $\delta^\zeta x_j$ is a derivative of $\delta^\xi x_i$ (in particular this implies that $i=j$).

Recall that for $\xi\in\mathbb N^m$ we let $|\xi|:=\xi_1+\cdots+\xi_m$. We will also consider the total order $\unlhd$ on $\mathbb N^m\times n$ defined by $(\xi,i)\unlhd (\zeta,j)$ if
$$(|\xi|, i,\xi_m,\ldots,\xi_1) \text{ is less than or equal } (|\zeta|,j,\zeta_m,\ldots,\zeta_1)$$
in the lexicographic order. Then $(\mathbb N^m\times n,\unlhd)$ has order type $(\omega,\in)$, and it induces the canonical orderly ranking on the algebraic indeterminates.

If $1\leq k\leq m$, we will let $\bf k$ denote $(0,\dots,1,\dots, 0)\in \mathbb N^m$ where the $1$ is in the $k^{\operatorname{th}}$-coordinate. Recall that for $\ell\in\mathbb N$ we let $\Gamma(\ell):=\{\xi\in\mathbb N^m: |\xi|\leq \ell\}$. Let $L$ be a finitely generated field extension of $F$ of the form
\begin{equation}\label{exten}
L=F(a_i^\xi: i<n, \xi\in\Gamma(s)),
\end{equation}
for some positive integer $s$. Note that $(a_i^{\xi}:i<n, \xi\in \Gamma(s))$ is simply a way to enumerate (label) the generators of the extension. Let $L':=F(a_i^\xi: i<n, \xi\in\Gamma(s-1))$. It is said that $L$ satisfies the \emph{differential condition} if for each $k=1,\dots,m$ there is a derivation $D_k: L'\to L$ extending $\delta_k$ such that $D_k a_i^\xi=a_i^{\xi+\bf k}$ for all generators $a_i^\xi$ of $L'$. 

A differential field extension $(M,D_1,\dots,D_m)$ of $(F,\delta_1,\ldots,\delta_m)$ is said to be \emph{compatible} with $L$ (as given in (\ref{exten})) if $L\leq M$ and $D_ka_i^\xi=a_i^{\xi +\bf k}$ for all $k=1,\dots,m$ and $a_i^{\xi}$ with $|\xi|\leq s-1$. A generator $a_i^\xi\in L$ is said to be a \emph{leader} if $a_i^\xi$ is algebraic over $F(a_j^\zeta:(\zeta,j)\triangleleft (\xi,i))$. A leader $a_i^\xi$ is said to be \emph{minimal} if it is a minimal element in the set of leaders with respect to the order $\leq$, in other words, if there is no leader $a_j^\zeta$ such that $(\zeta,j)<(\xi,i)$. 

We now recall some of the results from \cite{Pierce2014fields}. Note that the following gives sufficient conditions for the existence of the differential field extension discussed after Example \ref{example1}.

\begin{fact}\cite[Theorem 4.3]{Pierce2014fields}\label{Pierce1}
Assume that $F(a_i^\xi: i<n, \xi\in\Gamma(2r))$ satisfies the differential condition for some positive integer $r$. If for all minimal leaders $a_i^\xi$ we have that $|\xi|\leq r$, then $(F,\delta_1,\ldots,\delta_m)$ has a differential field extension compatible with $F(a_i^\xi: i<n, \xi\in\Gamma(2r-1))$.
\end{fact}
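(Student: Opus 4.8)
The plan is to produce the extension by \emph{prolonging the labelled family} $(a_i^\xi)$ indefinitely. I would try to extend $(a_i^\xi:i<n,\ \xi\in\Gamma(2r))$ to a family $(a_i^\xi:i<n,\ \xi\in\mathbb N^m)$ generating a field $M=F(a_i^\xi:i<n,\ \xi\in\mathbb N^m)$, so that for each $k$ the assignment $a_i^\xi\mapsto a_i^{\xi+\mathbf k}$ extends to a derivation $D_k$ of $M$ restricting to $\delta_k$ on $F$, and so that the $D_k$ commute. Since the generators of order $\le 2r-1$, and the action of the $D_k$ on them, are left untouched in this process, the resulting $(M,D_1,\dots,D_m)$ is automatically compatible with $L'=F(a_i^\xi:i<n,\ \xi\in\Gamma(2r-1))$. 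Everything therefore reduces to carrying out the prolongation one order at a time past order $2r$; this is a formal-integrability statement (in the spirit of Rosenfeld's lemma, or the Riquier--Janet theory), and the hypothesis encodes the fact that $2r$ is already large enough for all integrability obstructions to be visible.

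The combinatorial input is the structure of the leaders. Let $\Lambda\subseteq\mathbb N^m\times n$ be the set of $(\xi,i)$ such that $a_i^\xi$ is a leader. One first checks that $\Lambda$ is closed under $(\xi,i)\mapsto(\xi+\mathbf k,i)$: differentiating the algebraic dependence of a leader $a_i^\xi$ on the $\unlhd$-earlier generators and rewriting each $D_k a_j^\zeta$ as $a_j^{\zeta+\mathbf k}$ exhibits $a_i^{\xi+\mathbf k}$ as algebraic over generators that are again $\unlhd$-earlier, since the total order $\unlhd$ is preserved by $+\mathbf k$. Consequently, for each fixed $i$, the slice $\Lambda_i=\{\xi:(\xi,i)\in\Lambda\}$ is an up-set for the product order on $\mathbb N^m$; by Dickson's lemma it has finitely many minimal elements, the minimal leaders of index $i$, and these all have order $\le r$ by hypothesis. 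It follows that every leader relation is a differential consequence of the finitely many relations carried by minimal leaders, and that the only integrability conditions one has to verify are: for each pair of minimal leaders $a_i^\xi,a_i^\zeta$ of the \emph{same} variable, the consistency of the two ways of reaching $a_i^{\sup(\xi,\zeta)}$, where $\sup$ is the coordinatewise supremum. Such a condition lives at order $|\sup(\xi,\zeta)|\le|\xi|+|\zeta|\le 2r$, and the assumption that $F(a_i^\xi:i<n,\ \xi\in\Gamma(2r))$ satisfies the differential condition is precisely the assertion that all of these conditions hold.

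With this in place I would run an induction on $\ell\ge 2r$: assuming the family has been extended through order $\ell$, together with pairwise commuting derivations $D_k$ defined on the generators of order $<\ell$ and taking values among the generators of order $\le\ell$, I extend it to order $\ell+1$ by extending each $D_k$ one further step. For a non-leader $(\xi,i)$ of order $\ell+1$ I am free to choose $a_i^\xi$ to be transcendental over the field built so far and then simply \emph{define} $D_k a_i^{\xi-\mathbf k}:=a_i^\xi$ for every admissible $k$ (note every predecessor of a non-leader is again a non-leader, since $\Lambda_i$ is an up-set, so there is no clash). For a leader $(\xi,i)$ of order $\ell+1$, by contrast, the value $a_i^\xi$ is \emph{forced}: differentiating the minimal polynomial of a lower leader $a_i^{\xi-\mathbf k}$ and using that $D_k$ of the earlier generators is already defined expresses $a_i^\xi$ algebraically over the $\unlhd$-earlier generators, and one invokes the derivation-extension theorem \cite[Chapter~7, Theorem~5.1]{LangBook} --- the tool already used after Example~\ref{example1} --- to conclude that $D_k$ does extend with this value. \textbf{The main obstacle is to show that the forced value for a leader is well defined}, i.e.\ that when $\xi$ is a derivative of two different lower leaders the two prescriptions agree, and, jointly, that the extended $D_k$ still commute. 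This is where the order bound does its work: the verification reduces, by the up-set structure of the $\Lambda_i$ and by descent along differentiations, to the minimal-leader coherence conditions of the previous paragraph together with their higher derivatives; the former hold at order $\le 2r$ by hypothesis, and the latter are their formal derivatives and hence also hold. Iterating over all $\ell$ and putting $M=\bigcup_\ell F(a_i^\xi:|\xi|\le\ell)$ with the resulting derivations yields a differential field extension of $(F,\delta_1,\dots,\delta_m)$ compatible with $L'$, which is the claim. Equivalently, one may phrase the whole argument as: the perfect differential ideal generated over $F$ by the order-$\le 2r$ relations is proper because that system is coherent, and any differential-generic zero of one of its components provides $M$.
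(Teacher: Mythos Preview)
The paper does not prove this statement: Fact~\ref{Pierce1} is quoted verbatim from \cite[Theorem~4.3]{Pierce2014fields} and used as a black box (it feeds into Fact~\ref{Pierce2}, which is in turn the engine behind Proposition~\ref{goodfact}). So there is no ``paper's own proof'' to compare against; you have written a sketch of a proof for a result the authors deliberately imported.

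As for the sketch itself, the strategy is the right one and is in the spirit of Pierce's argument: organise the leaders by the up-set $\Lambda_i$ they generate, observe that the minimal leaders have order $\le r$, and note that the only nontrivial compatibility checks are the ``S-pair'' conditions at $\sup(\xi,\zeta)$, which land in order $\le 2r$ and are therefore covered by the differential condition on $\Gamma(2r)$. Two places deserve more care than you give them. First, the closure of $\Lambda$ under $+\mathbf k$ is only immediately available where the differential condition is already in force (orders $\le 2r$); beyond that, it is part of what you are constructing, so you should phrase the induction so that closure at level $\ell+1$ is a \emph{consequence} of the step, not an input to it. Second, and more seriously, the reduction ``well-definedness of the forced value for a leader reduces to the minimal-leader coherence conditions and their derivatives'' is the heart of the matter (this is the Rosenfeld-type lemma), and you treat it as a one-liner. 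A genuine proof has to show that any two reductions of a common derivative of two minimal leaders agree \emph{after further reduction}, and that this propagates through the inductive extension; your parenthetical ``by descent along differentiations'' hides a real argument. None of this is wrong, but as written it is a plan rather than a proof.
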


The following will be the main ingredient of our analogue of Fact \ref{ordi} in the case of fields with several commuting derivations (see Proposition \ref{goodfact}).

\begin{fact}\cite[Theorem 4.10]{Pierce2014fields}\label{Pierce2}
Let $r$ be a positive integer. There is an integer $s\geq r$, that depends only on $m$, $n$ and $r$, such that if $F(a_i^\xi: i<n, \xi\in\Gamma(s))$ satisfies the differential condition, then $(F,\delta_1,\ldots,\delta_m)$ has a differential field extension that is compatible with $F(a_i^\xi: i<n, \xi\in\Gamma(r))$.
\end{fact}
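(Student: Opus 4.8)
This is Theorem 4.10 of \cite{Pierce2014fields}. The plan is to produce the required extension as the limit of a \emph{principal prolongation} of the given kernel, using Fact \ref{Pierce1} as the engine once the minimal leaders have been brought under control. Concretely, from the data $(a_i^\xi)_{|\xi|\le s}$ one passes to successively longer kernels: at each new level, the fresh generator $a_i^\xi$ is defined to be the value forced by a predecessor $a_i^{\xi-\mathbf k}$ (an $a_i^{\xi-\mathbf k}$ with $\xi_k\ge 1$) whenever that predecessor is already a leader, and is chosen generic over all previous choices otherwise. If this process continues indefinitely, its union is a field on which $D_k a_i^\xi := a_i^{\xi+\mathbf k}$ defines derivations extending $\delta_k$; these commute, because $D_kD_l a_i^\xi = a_i^{\xi+\mathbf k+\mathbf l}=D_lD_k a_i^\xi$ on the generators and hence on the whole field. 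So the union is a differential field extension of $(F,\delta_1,\ldots,\delta_m)$ which, retaining the prescribed values $a_i^\xi$, is in particular compatible with $F(a_i^\xi:i<n,\ \xi\in\Gamma(r))$ provided $s\ge r$.

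Two points must be established, and they are the technical heart of \cite{Pierce2014fields}. (i) Consistency of the construction: the values forced by two distinct predecessors $a_i^{\xi-\mathbf k}$ and $a_i^{\xi-\mathbf k'}$ of the same generator must coincide, so that the principal prolongation never runs into a contradiction; this is an integrability/coherence statement of the same flavour as Rosenfeld's lemma, and it is exactly the failure of the naive analogue of Fact \ref{ordi} illustrated by the $Z_a$ example of the introduction. (ii) Effectivity of the threshold: coherence is \emph{guaranteed} as soon as the differential condition is assumed up to level $s=s(m,n,r)$ depending only on $m,n,r$. Here Dickson's lemma is used in effective form: for each $i<n$ the minimal leaders form an antichain in $(\mathbb N^m,\le)$, so are finite in number, and no new minimal leader is ever produced during a principal prolongation (a fresh leader is forced by a leader strictly below it in $\le$, hence is non-minimal), so the leader set stabilizes once; the remaining work is to bound, uniformly in $F$ and the labelling, how much of the differential condition must be present for the coherence conditions at and beyond that stabilization to hold automatically. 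This bound is obtained by analysing the orderly ranking $\unlhd$ together with a length-function estimate for antichains in $\mathbb N^m$, and it is what makes $s$ depend only on $m,n,r$. When in addition all minimal leaders already occur at bounded order $\le r'$ (with $r'\ge r$), one can short-circuit the construction and simply apply Fact \ref{Pierce1} with parameter $r'$, obtaining directly an extension compatible with $\Gamma(2r'-1)\supseteq\Gamma(r)$.

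The subtle step I expect to cost the most effort is (ii): pinning down the function $s(m,n,r)$ and proving that the coherence conditions really are forced by that much of the differential condition. For the bare existence of \emph{some} $s$, without an explicit value, one may argue instead by compactness: were the statement false for every $s$, an ultraproduct of the resulting counterexamples would satisfy the differential condition at every level --- i.e.\ be an infinite kernel --- and by the computation above an infinite kernel is itself a differential field extension compatible with $\Gamma(r)$, contradicting the choice of the counterexamples.
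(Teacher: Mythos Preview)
Your sketch has the right ingredients---antichains of minimal leaders, Dickson-type finiteness, and Fact~\ref{Pierce1} as the endgame---but the paper's argument (following Pierce) is more direct than the principal-prolongation construction you outline, and in particular never needs to confront your point~(i). Rather than building an infinite tower and verifying coherence stage by stage, the paper uses a doubling trick together with the quantitative antichain bound of Fact~\ref{anti1}: for the sequence $(2^i r)_{i\ge 0}$ there is $t=t(m,n,r)$ such that any chain $S_0\subseteq\cdots\subseteq S_t$ of antichains of $(\mathbb N^m\times n,\le)$ with $S_k\subseteq\{(\xi,i):|\xi|\le 2^k r\}$ has a repetition. Taking $s=2^t r$ and letting $S_u$ be the set of minimal leaders of $F_u=F(a_i^\xi:|\xi|\le 2^u r)$, one gets $S_u=S_{u+1}$ for some $u<t$; this says precisely that every minimal leader of $F_{u+1}$ lies in $\Gamma(2^u r)$, so Fact~\ref{Pierce1} applies to $F_{u+1}$ with parameter $2^u r$ and produces an extension compatible with $\Gamma(2^{u+1}r-1)\supseteq\Gamma(r)$. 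Thus the ``costly'' step you anticipate---a direct coherence analysis to pin down $s(m,n,r)$---is bypassed entirely: once minimal leaders stabilize across one doubling, Fact~\ref{Pierce1} handles all integrability issues in a single stroke, and the only remaining work is the effective computation of $t$ in Fact~\ref{anti1}. Your compactness remark for bare existence is valid, and your limit-of-prolongations picture would, if made rigorous, give a more hands-on description of the extension; but the paper's route is both shorter and what actually yields the explicit bounds used later.
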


We aim to give an algorithm to express $s$ in terms of $m,n$ and $r$. To do this, let us recall the proof of Fact~\ref{Pierce2} where one uses the existence of certain bounds such as the following:

\begin{fact}\label{anti1}
Given an increasing sequence $(a_i:i\in \mathbb N)$ of positive integers, there is $t\in \mathbb N$ (depending only on $m$, $n$ and the $a_i$'s) such that  any chain $S_0\subseteq S_1\subseteq \cdots \subseteq S_t$ of antichains of $(\mathbb N^m\times n,\leq)$ with $S_k\subseteq\{(\xi, i):|\xi|\leq a_k\}$ is not strictly increasing.
\end{fact}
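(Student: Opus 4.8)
The statement is a finiteness/termination result about chains of antichains in the partially ordered set $(\mathbb N^m \times n, \leq)$, where the antichains are constrained to lie in the ``boxes'' $\{(\xi,i): |\xi|\leq a_k\}$ that grow as $k$ grows. The natural framework here is the theory of well-quasi-orders combined with a König's-lemma-style compactness argument. Recall that $(\mathbb N^m,\leq)$ with the product order is a well-quasi-order by Dickson's lemma, hence so is the disjoint union $(\mathbb N^m\times n,\leq)$ of $n$ copies of it; in particular, it has no infinite antichain and no infinite strictly descending chain. However, the boxes $\{(\xi,i):|\xi|\leq a_k\}$ are finite for each fixed $k$, and this finiteness is what I would exploit directly, together with a bound on how long an antichain in $\mathbb N^m$ can be when its entries are bounded by a given integer.

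The plan is to argue by contradiction, assuming that for every $t$ there is a strictly increasing chain $S_0 \subsetneq S_1 \subsetneq \cdots \subsetneq S_t$ of antichains with $S_k \subseteq \{(\xi,i):|\xi|\leq a_k\}$. First I would note that because $(\xi,i)$ ranges over a well-quasi-order, there is a uniform upper bound $N = N(m,n,(a_i))$ on the \emph{size} of any antichain $S$ contained in $\{(\xi,i):|\xi|\leq a_k\}$ for \emph{any} $k$ — in fact an antichain in $\mathbb N^m\times n$ that consists entirely of elements of $\ell$-norm at most $a$ has cardinality bounded in terms of $m$, $n$, and $a$ alone (an antichain of coordinate-vectors in a bounded box is finite, and one may also invoke Dilworth-type estimates). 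The subtlety is that the bound must be uniform over all $k$, since the boxes grow; this is where I would have to be slightly careful. One clean route: since each $S_k \subseteq S_t$ and $S_t$ is a single antichain, all the $S_k$ are subsets of the one antichain $S_t$, which is finite — call its size $N_t = |S_t|$. Then the chain $S_0 \subsetneq \cdots \subsetneq S_t$ of subsets of an $N_t$-element set has length at most $N_t$, so $t \leq N_t - 1$, i.e. $t < |S_t|$.

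So everything reduces to bounding $|S_t|$ independently of $t$. Here the hypothesis $S_t \subseteq \{(\xi,i):|\xi|\leq a_t\}$ is not by itself enough, because $a_t \to \infty$. The key extra fact is that $S_t$ is an \emph{antichain}, and the crucial structural observation is: if $A$ is an antichain in $(\mathbb N^m,\leq)$, then $A$ is finite (Dickson), and moreover $A$ is contained in a box $[0,c]^m$ determined by $A$ itself — but we need uniformity. I would instead set $t$ to be chosen and derive a contradiction purely from well-quasi-ordering: the union $\bigcup_k S_k \subseteq S_t$ would, if the chain could be continued indefinitely, produce via a diagonal/compactness argument (König's lemma on the finitely-branching tree of ``possible next elements'') an infinite strictly increasing chain of finite antichains, whose union is then an infinite set all of whose elements are pairwise incomparable — i.e. an infinite antichain in $(\mathbb N^m\times n,\leq)$, contradicting Dickson's lemma. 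Making this precise: suppose no such $t$ works; build, for each $t$, a witnessing chain; by König's lemma (the set of witnessing chains of each length forms a finitely-branching tree once one notes only finitely many antichains live in each fixed box, after passing to a subsequence stabilizing the growth) extract an infinite ascending chain $T_0 \subsetneq T_1 \subsetneq \cdots$; each new element added in passing from $T_{k}$ to $T_{k+1}$ is incomparable to all earlier ones, so $\bigcup_k T_k$ is an infinite antichain — contradiction.

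The main obstacle, and the point that deserves the most care, is the uniformity/compactness step: the boxes $\{(\xi,i):|\xi|\leq a_k\}$ are not nested in a way that immediately gives a finitely-branching tree of ``partial solutions,'' because at stage $k$ the new element of $S_k$ may have norm as large as $a_k$, which is unbounded. The fix is to observe that for the König's-lemma argument one only needs, at each finite stage, \emph{finitely many} options, which is true since $\{(\xi,i):|\xi|\leq a_k\}$ is finite for each fixed $k$; one then applies König's lemma to the tree whose depth-$t$ nodes are the valid chains $(S_0,\dots,S_t)$. The branching at depth $t$ is finite because $S_{t+1}$ is an antichain inside the finite set $\{(\xi,i):|\xi|\leq a_{t+1}\}$. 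An infinite branch gives an infinite strictly increasing chain of antichains, whose union is an infinite antichain, contradicting the well-quasi-ordering of $(\mathbb N^m\times n,\leq)$. This contradiction yields the desired $t$, and tracing through König's lemma shows $t$ depends only on $m$, $n$, and the sequence $(a_i)$, as claimed. (For an \emph{effective} bound on $t$ — which the surrounding paper will want — one would replace the compactness argument with an explicit Dickson's-lemma bound, e.g. via the Ackermann-type estimates of the combinatorics literature on bad sequences, but for the mere existence statement asserted in Fact~\ref{anti1} the compactness argument suffices.)
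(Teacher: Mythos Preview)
Your final argument via K\"onig's lemma is correct: the tree of valid strictly increasing chains $(S_0,\dots,S_t)$ is finitely branching since each box $\{(\xi,i):|\xi|\leq a_k\}$ is finite, so if no $t$ worked the tree would be infinite and an infinite branch would give a nested sequence $T_0\subsetneq T_1\subsetneq\cdots$ of antichains whose union is an infinite antichain in $(\mathbb N^m\times n,\leq)$, contradicting Dickson's lemma. (Your earlier attempt to bound $|S_t|$ uniformly in $t$ does not work, as you yourself note; it is the compactness argument that carries the proof.)

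This route, however, is genuinely different from the paper's. The paper does not invoke compactness or well-quasi-ordering at all; instead it builds $t=t(m,n,(a_i))$ by an explicit recursion on $m$. The base cases $m=1,2$ are handled directly (for $m=2$ via the elementary bound $|S|\leq |\sigma|+1$ for any antichain $S\subseteq\mathbb N^2$ containing $\sigma$), and for $m>2$ each antichain is split along a bounded coordinate into pieces living in copies of $\mathbb N^{m-1}\times n$, to which the inductive hypothesis applies. Your argument is shorter and conceptually cleaner for the bare existence claim, and you rightly flag that it is non-effective. The paper's constructive recursion is longer and more technical, but it is exactly what the surrounding section requires: the entire purpose there is to extract a \emph{computable} value of $t$, feeding into $s=2^t r$ in Fact~\ref{Pierce2} and ultimately into the effective bounds of Section~\ref{count}. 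So your proof suffices for the Fact as stated, but would not serve the paper's downstream goals without the Ackermann-type explicit estimates you allude to at the end.
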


The proof of Fact~\ref{Pierce2} (as it appears in \cite{Pierce2014fields}) goes as follows. Applying Fact~\ref{anti1} with the sequence $(2^i r:i\in \mathbb N)$, we know that there is some value $t$ (depending only on $m$, $n$ and $r$) such that any chain $S_0\subseteq S_1\subseteq\cdots\subseteq S_t$ of antichains of $(\mathbb N^m\times n, \leq)$ such that $S_k\subseteq\{(\xi, i):|\xi|\leq 2^k r\}$ is not strictly increasing. 

We claim that taking $s=2^t r$ in the proof of \ref{Pierce2} does the job. Suppose $F(a_i^\xi:i<n, \xi\in\Gamma(2^t r))$ satisfies the differential condition. For each $u\leq t$, let $F_u=F(a_i^\xi: i<n, \xi\in\Gamma(2^u r))$ and $S_u$ be the set of minimal leaders of $F_u$. Note that $S_u\subseteq \{(\xi,i):|\xi|\leq 2^u r\}$. Then, by the choice of $t$, $S_u=S_{u+1}$ for some $u<t$, and so $F_{u+1}$ satisfies the hypothesis of Fact~\ref{Pierce1}. Hence, $(F,\delta_1,\ldots,\delta_m)$ has a differential field extension compatible with $F(a_i^\xi: i<n, \xi\in\Gamma(2^{u+1} r-1))$, and therefore also compatible with $F(a_i^\xi: i<n,\xi\in\Gamma(r))$, as desired. 

We note that in order to give an effective method to find $s$ it suffices to find such a method for $t$. In what follows we provide an algorithm to compute $t$ (finding such an effective algorithm is a problem that was originally studied by Seidenberg \cite{Seid} in the 1950's). It is worth mentioning that parts of our algorithm are more or less implicit in the proof of Fact \ref{anti1} given by Pierce. Moreover, the existence and recursive algorithms to compute similar bounds have been established (since the 1980's) using general versions of Dickson's lemma, see for example \cite{figueira2011}, \cite{mcaloon1984petri} or \cite{Socias}. The reason we present here an explicit algorithm is to keep the paper as self-contained as possible and to justify the effectiveness of our bounds.

\subsection{Algorithm to compute $t$ of Fact \ref{anti1}}

The construction of $t=t(m,n,(a_i:i\geq 0))$ is recursive. We will do the construction while proving that such construction works. The base cases are $m=1,2$. In the case $m=1$, it is clear that for all $n$ and $(a_i:i\geq 0)$, $t(1,n,(a_i:i\geq 0))=n+1$. When $m=2$, we first prove

\begin{lem}\label{case1}
If $S$ is an antichain of $\mathbb N^2$ with respect to the product ordering and $\sigma\in S$, then $|S|\leq |\sigma|+1$.
\end{lem}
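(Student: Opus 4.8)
The plan is to exploit the very rigid structure of incomparable pairs in $\mathbb{N}^2$: two elements of $\mathbb{N}^2$ are incomparable in the product order precisely when one has a strictly smaller first coordinate and a strictly larger second coordinate than the other. Writing $\sigma=(\sigma_1,\sigma_2)$, every $\tau=(\tau_1,\tau_2)\in S$ with $\tau\neq\sigma$ is incomparable with $\sigma$, so either $\tau_1<\sigma_1$ and $\tau_2>\sigma_2$, or $\tau_1>\sigma_1$ and $\tau_2<\sigma_2$. (The remaining cases, where some coordinate of $\tau$ equals the corresponding coordinate of $\sigma$, or where both coordinates lie strictly on the same side, all force $\tau$ and $\sigma$ to be comparable.) This partitions $S\setminus\{\sigma\}$ into
$$A=\{\tau\in S:\tau_1<\sigma_1,\ \tau_2>\sigma_2\}\quad\text{and}\quad B=\{\tau\in S:\tau_1>\sigma_1,\ \tau_2<\sigma_2\}.$$

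Next I would bound $|A|$ and $|B|$ separately. For $A$: the first coordinate of any element of $A$ lies in $\{0,1,\dots,\sigma_1-1\}$, so it suffices to show the map $\tau\mapsto\tau_1$ is injective on $A$. If $\tau,\tau'\in A$ had $\tau_1=\tau'_1$, then comparing second coordinates would make $\tau$ and $\tau'$ comparable (one of $\tau_2\le\tau'_2$ or $\tau'_2\le\tau_2$ holds), contradicting that $S$ is an antichain. Hence $|A|\le\sigma_1$. Symmetrically, the second coordinate of any element of $B$ lies in $\{0,1,\dots,\sigma_2-1\}$ and $\tau\mapsto\tau_2$ is injective on $B$, so $|B|\le\sigma_2$.

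Combining, $|S|=|A|+|B|+1\le\sigma_1+\sigma_2+1=|\sigma|+1$, which is the claim. There is no serious obstacle here; the only point requiring a moment of care is the trichotomy describing incomparability in $\mathbb{N}^2$ and the resulting injectivity of each coordinate projection on $A$ and $B$, which is exactly where the antichain hypothesis is used. This lemma will presumably serve as the base step for the $m=2$ case of the recursive construction of $t(m,n,(a_i))$, controlling how an antichain confined to $\{(\xi,i):|\xi|\le a_k\}$ can grow as $k$ increases.
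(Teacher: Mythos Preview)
Your proof is correct and follows essentially the same idea as the paper's. The paper packages the same dichotomy (every $\tau\in S\setminus\{\sigma\}$ has either $\tau_1<\sigma_1$ or $\tau_2<\sigma_2$, exclusively) into a single injection $f:S\to S_\sigma=\{(s_1,s_2):s_1+s_2=|\sigma|\}$, sending $\tau$ to $(\tau_1,|\sigma|-\tau_1)$ in your case $A$ and to $(|\sigma|-\tau_2,\tau_2)$ in your case $B$; your argument simply unpacks this injection into two coordinate projections and counts $|A|\le\sigma_1$ and $|B|\le\sigma_2$ directly.
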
 

\begin{proof} 
To prove the lemma it suffices to show that we can embed $S$ in $S_\sigma=\{(s_1,s_2)\in \mathbb N^2: s_1+s_2=|\sigma|\}$, since the latter has cardinality $|\sigma|+1$. Let $\sigma=(\sigma_1,\sigma_2)$ and consider the map $f:S\to S_\sigma$ given by
\begin{equation}
\bar s=(s_1,s_2)\mapsto 
\left\{
\begin{array}{cc}
\sigma & \text{ if } \bar s=\sigma\\
(s_1,|\sigma|-s_1) & \;\;\text{ if } s_1<\sigma_1\\
(|\sigma|-s_2,s_2) &  \;\;\text{ if } s_2< \sigma_2\\
\end{array}
\right.
\end{equation}
It is clear that $f(\bar s)\in S_\sigma$ and that $f$ is injective. 
\end{proof} 

\begin{lem}\label{intwo}
The following recursive definition gives the value of $t$ for $m=2$: 
$$t(2,n,(a_i:i\geq 0))=b_{n}+1,$$
where $b_0=0$ and $b_{i+1}= a_{b_{i}+1}+b_i+1$ for $i=0,\dots,n-1$. 
\end{lem}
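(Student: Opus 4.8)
The goal is to show that for $m=2$ the bound $t$ of Fact~\ref{anti1} can be taken to be $b_n+1$, where $b_0=0$ and $b_{i+1}=a_{b_i+1}+b_i+1$. The plan is to analyze a strictly increasing chain $S_0\subsetneq S_1\subsetneq\cdots\subsetneq S_t$ of antichains of $(\mathbb N^2\times n,\leq)$ with $S_k\subseteq\{(\xi,i):|\xi|\leq a_k\}$ and to bound its length using Lemma~\ref{case1}. First I would reduce to a single ``fiber'': since $(\mathbb N^2\times n,\leq)$ is the disjoint union of $n$ copies of $(\mathbb N^2,\leq)$ (elements in different fibers are incomparable, so an antichain is just an $n$-tuple of antichains of $\mathbb N^2$), it suffices to handle $\mathbb N^2$ and then, roughly speaking, multiply the relevant quantity by $n$ in the recursion — this is why $n$ enters only through the number of steps $b_n$.

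The core of the argument is the observation, from Lemma~\ref{case1}, that an antichain $S$ of $\mathbb N^2$ containing an element $\sigma$ has size at most $|\sigma|+1$; in particular an antichain contained in $\{\xi:|\xi|\leq a\}$ that is \emph{nonempty} has size at most $a+1$. The key step is to track how the chain can grow. When passing from $S_k$ to $S_{k+1}$ with $S_k\subsetneq S_{k+1}$, either some genuinely new minimal element of small norm is introduced, or the ``size budget'' increases. The plan is to set up a potential/counting argument: after the chain has been strictly increasing for $b_i$ steps (in a single fiber), the antichain either has become ``saturated'' relative to the current norm bound $a_{b_i}$ — forcing its size to be at least some controlled quantity — and any further strict increase requires an element of norm at least $a_{b_i+1}$, which by Lemma~\ref{case1} caps the total size, hence caps how many more strict steps are possible. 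Iterating this, $b_{i+1}=a_{b_i+1}+b_i+1$ records exactly: after $b_i$ steps the norm bound in force is $a_{b_i}$, one more step pushes us to norm $\leq a_{b_i+1}$, and then at most $a_{b_i+1}+1$ further strict steps are possible before the antichain is full in $\mathbb N^2$; adding $b_i$ and absorbing the $+1$ gives the recursion. Closing over all $n$ fibers is what turns $b_i$ into $b_n$ and produces the final $+1$.

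The main obstacle I expect is making the ``saturation'' step precise: I need to argue that once a strictly increasing chain of antichains in $\mathbb N^2$ has run for a while, the current antichain must contain an element of small norm, so that Lemma~\ref{case1} actually bites. Concretely, if $S_k\subsetneq S_{k+1}$ then $S_{k+1}$ must contain an element $\sigma\notin\downarrow S_k$ (not below any element of $S_k$ and not above — by antichain, not comparable), and one must check that such an element cannot have been "available" earlier, i.e.\ that each strict step either permanently removes a ``coordinate slot'' or forces higher norm. I would handle this by a careful bookkeeping of the two coordinate projections of the antichain (using that in $\mathbb N^2$ an antichain is the graph of an antitone partial step function), showing that the number of distinct first-coordinates appearing can only change in a controlled way across a strict step. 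Once this monotone invariant is identified, the recursion $b_{i+1}=a_{b_i+1}+b_i+1$ drops out by induction on $i$, and the outer loop over the $n$ fibers together with one final step (to detect that no fiber can change) yields $t(2,n,(a_i))=b_n+1$.
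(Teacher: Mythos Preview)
Your fiber decomposition and appeal to Lemma~\ref{case1} are the right ingredients, but you are missing the organizing idea that makes the argument short. The paper does not track the chain step by step or build a potential function; it bounds $|S_r|$ rather than $r$, using the trivial fact that a strictly increasing nested chain $S_0\subsetneq\cdots\subsetneq S_r$ has $r\leq|S_r|$. So it suffices to prove $|S_r|\leq b_n$, and this is done by induction on $n$. Write $S_k=\bigcup_{i<n}S_k^i$ in fibers and reorder the fibers so that fiber $0$ is the first to change, fiber $1$ the next new one to change, and so on. The inductive hypothesis (applied to the first $i$ fibers) gives that fiber $i$ is nonempty by step $b_i+1$, so it contains some $\sigma_i$ with $|\sigma_i|\leq a_{b_i+1}$; since the chain is nested, $\sigma_i$ remains in $S_r^i$, and Lemma~\ref{case1} gives $|S_r^i|\leq a_{b_i+1}+1$. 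Summing over $i<n$ yields $|S_r|\leq\sum_{i<n}(a_{b_i+1}+1)=b_n$, which is exactly the recursion.

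Two places where your plan goes astray. First, your ``main obstacle'' dissolves once you use that the chain is nested: the first element to enter a fiber stays forever, so there is nothing to prove about ``the current antichain containing an element of small norm'' --- it automatically does, and no bookkeeping of coordinate slots or antitone step functions is needed. Second, your reading of the recursion is off: $a_{b_i+1}+1$ is not a count of additional strict steps available in a single fiber, it is the \emph{size} bound on fiber $i$ in the final antichain $S_r$. The role of $n$ is not to ``multiply the relevant quantity'' but to index the iterates of the recursion, one per fiber, and the telescoping sum $\sum_{i<n}(a_{b_i+1}+1)$ is what produces $b_n$.
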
 
\begin{proof} 
Suppose there is a strictly increasing chain $S_0\subset\cdots\subset S_{r}$ of antichains of $(\mathbb N^2\times n,\leq)$ with $S_k\subseteq\{(\xi, i):|\xi|\leq a_k\}$. To prove the lemma it suffices to show that $|S_r|\leq b_n$. We proceed by induction on $n$. The base case $n=1$ follows from Lemma \ref{case1}. Let $n>1$. Let $S_k^{i}=\{\sigma\in \mathbb N^2: (\sigma,i)\in S_k\}$ for $i< n$. We now claim that, after reordering the elements of $n$ if necessary, if $S_k^i\neq S_{k+1}^{i}$ for some $0<i<n$, then for some $p\leq k$ we have $S_p^{i-1}\neq S_{p+1}^{i-1}$. Indeed this can be accomplished as follows. Let $k_0=0$. We have that $S_{k_0}^{i_0}\neq S_{k_0+1}^{i_0}$ for some $i_0<n$. Reorder the elements of $n=\{0,1,\dots,n-1\}$ in such a way that $i_0=0$. Now let $k_1$ be the smallest such that $S_{k_1}^{i_1}\neq S_{k_1+1}^{i_1}$ for some $i_1> 0$. Reorder $\{1,2,\dots,n-1\}$ in such a way that $i_1=1$. Now let $k_2$ be the smallest such that $S_{k_2}^{i_2}\neq S_{k_2+1}^{i_2}$ for some $i_2>1$. Reorder $\{2,3,\dots,n-1\}$ in such a way that $i_2=2$. We continue this procedure for $j=3,\dots,n-1$ by letting $k_j$ be the smallest such that $S_{k_j}^{i_j}\neq S_{k_j+1}^{i_j}$ for some  $i_j>j-1$, and reordering $\{j,j+1,\dots,n-1\}$ in such a way that $i_j=j$. Note that $k_0\leq k_1\leq\cdots\leq k_{n-1}$. After this reordering of $n$ we obtain that if $S_k^i\neq S_{k+1}^i$ for some $0<i<n$, then $S_{k_i}^i\neq S_{k_i+1}^i$, and consequently $S_{k_{i-1}}^{i-1}\neq S_{k_{i-1}+1}^{i-1}$. Since $k_{i-1}\leq k_i$, we have proven our claim.

Now, by induction, we have that for each $i<n$ there is $k_i\leq b_{i}+1$ such that $S_{k_i}^i\neq \emptyset$, say $\sigma_i\in S_{k_i}^i$. Since the sequence of $a_i$'s is increasing, we have that $|\sigma_i|\leq a_{b_{i}+1}$. By Lemma \ref{case1}, $|S_r^{i}|\leq a_{b_{i}+1}+1$ for all $i<n$. Thus, since $S_r=\cup_{i<n}S_r^i$, we have
$$|S_r|\leq\sum_{i<n}(a_{b_{i}+1}+1)=a_{b_{n-1}+1}+b_{n-1}+1=b_n,$$
as desired. 
\end{proof} 

\noindent {\bf Recursive construction of $t=t(m,n,(a_i:i\geq 0))$ for $m>2$:} 

First we consider the case $t(m,1,(a_i:i\geq 0))$. We assume that we have recursively constructed $t(m-1,n,(d_i:i\geq 0))$ for arbitrary $n$ and sequence $(d_i)$. Thus, for each sequence $(d_i:i\geq 0)$, we have (recursively) defined $g:\m N\to \m N$ such that $g(k)=t(m-1,n,(d_i:i\geq k))$. We call such a $g$ the \emph{bound function} associated to $(d_i:i\geq 0)$. Let $f_1$ be the bound function associated to the given sequence $(a_i:i\geq 0)$; that is, $f_1(k) = t( m-1, n , (a_i:i\geq k))$.

Now suppose that we have an increasing chain $S_0\subset S_1\cdots\subset S_r$. We need to find an upper bound for $r$. Let $\xi\in S_1$ and $a:=a_1$. For each $1\leq i\leq m$ and $j\in\m N$, let $S_k^{i,j}=\{(\zeta,l)\in S_k:\zeta_i=j\}$. By the antichain assumption, if $\zeta\in S_k$ then there exists $1\leq i\leq m$ such that $\zeta_i\leq \xi_i$, and consequently $\zeta\in \bigcup_{j\leq \xi_i}S_k^{i,j}$. Thus,
$$S_k=\bigcup_{1\leq i\leq m,j\leq \xi_i}S_k^{i,j}.$$
Moreover, since $|\xi|\leq a$, we have $\xi_i\leq a$ and $\bigcup_{j\leq \xi_i}S_k^{i,j}\subseteq \bigcup_{j\leq a}S^{i,j}$ for all $1\leq i\leq m$. It then follows that
\begin{equation}\label{eq23}
S_k=\bigcup_{1\leq i\leq m,j\leq \xi_i}S_k^{i,j}=\bigcup_{1\leq i\leq m,j\leq a}S_k^{i,j},
\end{equation}
which is a union of at most $m(a+1)$-many sets. Thus we may write 
$$S_k=\bigcup_{i=0}^{p:=m(a+1)-1}S_k^i,$$
where each $S_k^i$ is one of the $S_k^{i,j}$ appearing in the right hand side of (\ref{eq23}).

Since $S_0\subset S_1$, we have that $S_0^{\ell_0}\subset S_1^{\ell_0}$ for some $1\leq \ell_0\leq p$. Now, the sequence $S_0^{\ell_0}\subseteq\cdots \subseteq S_{f_1(0)}^{\ell_0}$ can not be strictly increasing and so we must have that $S_0^{\ell_1}\subset S_{f_1(0)}^{\ell_1}$ for some $1\leq \ell_1\leq p$ with $\ell_1\neq \ell_0$. Letting $b^1_0=0$, $b^1_{l+1}=f_1(b^1_l)+b^1_l$ and $f_{2}$ be the bound function associated to the sequence $(a_{b^1_l}:l\geq 0)$, we see that there must be some $1\leq \ell_2\leq p$ with $\ell_2\notin \{\ell_0,\ell_1\}$ such that $S_0^{\ell_2}\subset S_{b^1_{f_2(0)}}^{\ell_2}$. Continuing in this fashion for $i=2,\dots,p-1$, if we let $b^i_0=0$, $b^i_{l+1}=b^{i-1}_{f_{i}(b^i_l)}+b^i_l$ and $f_{i+1}$ be the bound function associated to the sequence $(a_{b^i_l}:l\geq 0)$, we see that there must be $1\leq \ell_{i+1}\leq p$ with $\ell_{i+1}\notin\{\ell_0,\dots,\ell_i\}$ such that $S_0^{\ell_{i+1}}\subset S_{b^i_{f_{i+1}(0)}}^{\ell_{i+1}}$. Once we get to $i=p-1$, we obtain $f_{p}$ and $1\leq \ell_p\leq p$ with $\ell_{p}\notin\{\ell_0,\dots,\ell_{p-1}\}$ such that $S_0^{\ell_p}\subset S_{b^{p-1}_{f_{p}(0)}}^{\ell_p}$. Finally, letting $b_0=0$, $b_{l+1}=b^{p-1}_{f_{p}(b_l)}+b_l$ and $f$ be the bound function associated to the sequence $(a_{b_l}:l\geq 0)$, we see that 
$r<b_{f(0)}$. Thus, we let $t(m,1,(a_i:i\geq 0)):=b_{f(0)}$.

Finally, we consider the case $t(m,n,(a_i:i\geq 0))$ for $m>2$ and $n>1$. By induction, we assume that we have recursively defined bound functions for $m$ and $n'<n$. From now on, when we use the term bound function we mean we respect to the fixed $m$. Let $f$ be the bound function associated to $n'=1$ and the given sequence $(a_i:i\geq 0)$; that is, $f(k)=t(m,1,(a_i:i\geq k))$. Suppose we have an increasing chain $S_0\subset S_1\subset \cdots\subset S_r$. Let 
$$S_k'=\{(\xi,i)\in S_k: \,i=n-1\} \quad \text{ and }\quad S_k''=\{(\xi,i)\in S_k: \, i<n-1\}.$$
The sequence $S'_0\subseteq\cdots\subseteq S'_{f(0)}$ can not be strictly increasing and so $S''_0\subset S''_{f(0)}$. Leting $b_0=0$, $b_{l+1}=f(b_l)+b_l$, and $g$ be the bound function associated to $n'=n-1$ and the sequence $(a_{b_l}: l\geq 0)$, we see that $r< b_{g(0)}=: t(m,n,(a_i:i\geq 0))$. This completes our algorithm.

\

To conclude this section let us consider again the situation of Fact \ref{Pierce2}. Let $r$ be a positive integer and consider the sequence $(2^i r:i\in \mathbb N)$. Let $t=t(m,n,r)$ be the bound associated to this sequence given by the above algorithm. By the argument following Fact \ref{anti1} (and noting that in the ordinary case one can simply take $s=r$), the value of $s$ in Fact \ref{Pierce2} can be taken to be $T_r^{m,n}$ where
\begin{equation}\label{defT}
\left\{
\begin{array}{cc}
T_r^{1,n}= r \qquad\quad&  \\
T_r^{m,n}=2^{t(m,n,r)} r, &\; \text{ when }m>1.
\end{array}
\right.
\end{equation}

Here are some calculations of $T_1^{2,n}$ for $n=1,2,3$:
\begin{enumerate}
\item If $n=1$, then $t=b_1+1=4$ and so $T_1^{2,1}=2^4=16$.
\item If $n=2$, then $t=b_2+1=2^4+5=21$ and so $T_1^{2,2}=2^{2^4+5}=2097152$.
\item If $n=3$, then $t=b_3+1=2^{2^4+5}+2^4+6=2097174$, and so $T_1^{2,3}=2^{2^{2^4+5}+2^4+6}$.
\end{enumerate}

\section{Uniform Bounding} \label{count}

In this final section we prove our main result: an effective version of uniform bounding for partial differential fields.  We first prove an analogue of  Fact \ref{ordi} for partial differential fields. To do this we will use the results of the previous section and so, for the rest of this section, we fix $T=T_1^{m,n}$ (where the latter was defined in (\ref{defT})). Note that $T$ only depends on $m$ and $n$. As in Section \ref{prolongingthemagic}, $(K,\delta_1,\ldots,\delta_m)$ denotes a differentially closed field of characteristic with commuting derivations. 

\begin{prop}\label{goodfact}
Let $V$ be an irreducible subvariety of $K^n$ and $W$ a subvariety of $K^{n(m+1)}$. Suppose $X$ is an irreducible subvariety of $B_{T-1}(V)$ such that 
$$B_{T-1}(W)\cap B_1(X)\subseteq B_T(V)$$ 
projects dominantly onto $X$. If $V'$ is a subvariety of $V$ which does not contain the projection of $X$ in $V$, then there is $v\in V\setminus V'$ such that $\nabla(v)\in W$.
\end{prop}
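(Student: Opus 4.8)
The plan is to find a point $v\in V\setminus V'$ together with a tuple $w$ of prolongation-coordinates over $v$ that not only lies in $W$ but is \emph{differential}, in the sense that the field it generates satisfies the differential condition to high enough order; Fact~\ref{Pierce2} will then upgrade this to an actual differential field extension realizing $\nabla(v)\in W$ inside $K$. First I would pass to a generic point $x$ of $X$ over the base field $F$ (enlarging $F$ so everything in sight is defined over $F$, as permitted by the conventions set up before Lemma~\ref{degreehyp}). By hypothesis $B_{T-1}(W)\cap B_1(X)$ projects dominantly onto $X$, so I can choose a point lying over $x$ in this intersection that is generic over $F\langle x\rangle$; call it $y$. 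Since $y\in B_1(X)$ and $X$ is the Zariski closure of the $\nabla_{T-1}$-image of $V$, the coordinates of $y$ are naturally indexed by $\Gamma(T)$-derivatives of the original $n$ variables, and—because $y\in B_1(X)\subseteq B_T(V)$—these coordinates satisfy all the algebraic relations coming from $\mathcal I_T(V/F)$; moreover the ``vertical'' relations built into $B_1(-)$ say precisely that the $\delta_k$-labelled coordinates behave like derivatives of the lower ones. Concretely, writing $a_i^\xi$ for the coordinate of $y$ indexed by $(\xi,i)$ with $|\xi|\le T$, the field $F(a_i^\xi:i<n,\xi\in\Gamma(T))$ satisfies the differential condition: for each $k$ the assignment $a_i^\xi\mapsto a_i^{\xi+\mathbf k}$ extends $\delta_k$ to a derivation from the $\Gamma(T-1)$-subfield into the $\Gamma(T)$-field, because the defining equations of $B_T(V)$ are exactly the prolonged equations $f^\xi$, which encode $\delta^\xi f=0$.

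With the differential condition in hand, since $T=T_1^{m,n}$ is, by \eqref{defT}, the value of $s$ provided by Fact~\ref{Pierce2} for $r=1$, I conclude that $(F,\delta_1,\ldots,\delta_m)$ has a differential field extension compatible with $F(a_i^\xi:i<n,\xi\in\Gamma(1))$. In particular there is a differential field extension $(M,D_1,\ldots,D_m)$ of $F$ in which the tuple $v:=(a_1^{\mathbf 0},\ldots,a_n^{\mathbf 0})$ satisfies $D_k v = (a_i^{\mathbf k})_{i<n}$ for each $k$, i.e.\ $\nabla(v)=y_{\Gamma(1)}\in W$ (the $\Gamma(1)$-truncation of $y$ lands in $W$ because $y\in B_{T-1}(W)$ and projection to the first $n(m+1)$ coordinates of a point of $B_{T-1}(W)$ lands in $W$). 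Embedding $M$ over $F$ into the (saturated, hence universal) $K$ realizes $v\in K^n$ with $\nabla(v)\in W$.

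It remains to check $v\in V\setminus V'$. That $v\in V$ is immediate: the $\mathbf 0$-coordinates of a point of $B_T(V)$ lie in $V$ by definition of $\pi$. That $v\notin V'$ is where the genericity of $x$ is used: the projection of $X$ to $V$ is an irreducible subvariety not contained in $V'$ by hypothesis, and $v$ is (a specialization-free image of) a generic point of $X$, hence $v$ is a generic point of $\pi_{T-1}(X)$ over $F$; were $v\in V'$ we would get $\pi_{T-1}(X)\subseteq V'$, contradicting the hypothesis on $V'$. This completes the argument.

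The main obstacle I anticipate is the bookkeeping in the first paragraph: one must verify carefully that a generic point of $B_{T-1}(W)\cap B_1(X)$, viewed through the identification of prolongation coordinates with formal derivatives, really does yield a field extension satisfying the differential condition at level $T$ — in particular that the relations packaged in $B_1(X)$ together with $X\subseteq B_{T-1}(V)$ are exactly what is needed for the derivations $D_k$ on the $\Gamma(T-1)$-subfield to be well defined, and that no extra integrability obstruction has been smuggled in. Everything after that is a direct invocation of Fact~\ref{Pierce2} and the universality of $K$.
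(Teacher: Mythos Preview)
Your proposal is correct and follows essentially the same route as the paper's proof: pick a generic point of (an irreducible component of) $B_{T-1}(W)\cap B_1(X)$, verify that the resulting field extension satisfies the differential condition at level $T$, invoke Fact~\ref{Pierce2} to obtain a compatible differential field extension, and then use differential closedness of $K$ to pull the witness back into $K$.

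One point to sharpen in your write-up: your stated reason for the differential condition --- ``the defining equations of $B_T(V)$ are exactly the prolonged equations $f^\xi$'' --- is not quite the right one. What you must check is that for every polynomial $f$ vanishing on the $\Gamma(T{-}1)$-truncation $\hat a$, the associated derivative relation holds; since $\hat a$ is generic in $X$, these $f$ range over the ideal of $X$, which may be strictly larger than the ideal generated by prolonged equations of $V$. The correct justification (which the paper makes explicit) is that the full point lies in $B_1(X)\subseteq \tau_1(X)$, and the defining equations of $\tau_1(X)$ are precisely the required derivative relations for \emph{all} $f$ in the ideal of $X$. You essentially say this with your remark about ``the vertical relations built into $B_1(-)$'', and you rightly flag this as the delicate bookkeeping step; just make sure the final version points to $\tau_1(X)$ rather than $B_T(V)$.
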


\begin{rem} 
Note that in the case when $m=1$, and so $T=1$, the assumptions of the proposition reduce to \emph{$W\cap B_1(X)$ projects dominantly onto $X$}. Thus, in this case, we recover Fact \ref{ordi}. 
 \end{rem}

\begin{proof}
We take $\mathbb U$ to be a universal extension of $K$; that is, $\mathbb U$ is an algebraically closed field extension of $K$ that contains a generic point for each affine algebraic variety defined over $K$. For any closed subvariety $Z$ of $K^n$ and any intermediate field $K\leq F\leq \mathbb U$, we let $Z(F)$ be the zero set in $F$ of the ideal of polynomials over $K$ vanishing on $Z$. Now, let $Y$ be an irreducible component of $B_{T-1}(W)\cap B_1(X)\subseteq B_T(V)$ that projects dominantly onto $X$. Let $(a_{i}^\xi:  i<n, \xi\in \Gamma(T))$ be a generic point of $Y(\mathbb U)$ over $K$. Then $\hat a:=(a_{i}^\xi: i<n, \xi\in\Gamma(T-1))$ is a generic point of $X(\mathbb U)$ over $K$. Also, note that since the projection of $X$ in $V$ is not contained in $V'$, the point $(a_{i}^{\bf 0}:i<n)$ is in $V(\mathbb U)\setminus V'(\mathbb U)$. We claim that $L:=K(a_{i}^\xi: i<n,\xi\in\Gamma(T))< \mathbb U$ satisfies the differential condition. Fix $1\leq k\leq m$. To prove that $L$ satisfies the differential condition it suffices to show that there is a derivation $D_k:L' \to L$, where $L'=K(\hat a)$, extending $\delta_k$ such that $D_ka_i^\xi=a_i^{\xi+{\bf k}}$ for all $a_i^{\xi}\in L'$. By the standard argument for extending a single derivation (see \cite[Chapter~7, Theorem~5.1]{LangBook}, for instance), it suffices to show that if $f$ is a polynomial over $K$ in variables $(x_{i}^\xi: i<n, \xi\in\Gamma(T-1))$ and $f(\hat a)=0$, then 
\begin{equation}\label{news}
\sum_{i<n,\xi\in\Gamma(T-1)}\frac{\partial f}{\partial x_{i}^\xi}(\hat a)\cdot a_{i}^{\xi+{\bf k}}+f^{D_k}(\bar a)=0.
\end{equation}
Since $\hat a$ is a generic point of $X(\mathbb U)$, the above equation is one of the equations defining the first prolongation $\tau_1V(\mathbb U)$ (as discussed in \S2) evaluated at $(a_{i}^\xi: i<n, \xi\in\Gamma(T))$. We have seen that $B_1(X)(\mathbb U)\subseteq \tau_1V(\mathbb U)$, and hence, since
$$(a_{i}^\xi: i<n, \xi\in\Gamma(T))\in B_{1}(X)(\mathbb U),$$
we  have that equation (\ref{news}) indeed holds. This shows that $L$ satisfies the differential condition. By Fact \ref{Pierce2} and the choice of $T$, there is a differential field extension $(M,\Delta)$ of $(K,\Delta)$ that is compatible with $K(a_{i}^\xi: i<n,\xi\in\Gamma(1))$. By universality of $\mathbb U$ over $K$, we have that $M< \mathbb U$. Since $(a_{i}^\xi:  i<n, \xi\in \Gamma(T))\in B_{T-1}(W)(\mathbb U)$, we have that $(a_i^\xi:i<n,\xi\in\Gamma(1))\in W(\mathbb U)$. Thus, in the structure $(M,\Delta)$, we have that $\nabla(a_{i}^{\bf 0}:i<n)\in W(M)$, and above we had already seen that $(a_{i}^{\bf 0}:i<n)$ is in $V(M)\setminus V'(M)$. Using the fact that $(K,\Delta)$ is differentially closed, one can now find a point in $K$ with the desired properties.
\end{proof}

To prove uniform bounding (for partial differential polynomial equations), let us first consider the case of systems of first-order differential equations of the form (in Corollary \ref{finalcor} we prove the general case):
\begin{align*}
 p_1(x) &=0 
\\ &\vdots
\\ p_r(x)&=0
\\ q_1(x,\delta_1 x,\ldots,\delta_m x)&=0
\\ &\vdots
\\ q_s(x,\delta_1 x,\ldots,\delta_m x)&=0
\end{align*}
where $p_i$ and $q_i$ are polynomials over $K$ in the variables $x=(x_1,\ldots,x_{n})$ and $(x^\xi)_{\xi\in \Gamma(1)}$, respectively. Let $Z$ be the set of solutions in $K$ of this system and assume that it is finite. Our goal is to give an upper bound on the cardinality of $Z$ in terms of $m$, $n$ and the degrees of the $p_i$'s and the $q_i$'s. 

Let $V$ be the closed subvariety of $K^n$ defined by the $p_i$'s and $W$ be the closed subvariety of $K^{n(m+1)}$ defined by the $q_i$'s, then the set of solutions is given by $Z=\{v\in V:\nabla(v)\in W\}$. Essentially what we will do is provide an algorithm which will compute an upper bound for the size of $Z$. The termination of the algorithm follows by noetherianity of the Zariski topology. The upper bound will follow from keeping track of degrees and dimensions of certain algebraic varieties. Here is an informal description of the algorithm: 

\begin{enumerate} 
\item Set $X$ to be the Zariski closure of $\pi(B_T(V)\cap B_{T-1}(W))$ where $\pi$ is the canonical projection $B_1(B_{T-1}(V))\to B_{T-1}(V)$. 
\item Is the projection of $X$ in $V$ a finite set?
\begin{enumerate}
\item If yes, we stop.
\item If no, replace $X$ with the Zariski closure of $\pi(B_{T-1}(W)\cap B_1(X))$ and go back to the beginning of Step 2 (as we will see in the proof of Theorem \ref{maint}, this step will decrease the dimension of the irreducible components of $X$ whose projection in $V$ is infinite). 
\end{enumerate}
\end{enumerate} 

\begin{thm}\label{maint}
Let $V$ be a subvariety of $K^n$ of dimension $d$ and $W$ a subvariety of $K^{(m+1)n}$. If $Z=\{v\in V: \nabla(v)\in W\}$ is finite, then 
$$|Z|\leq (\deg V)^{\alpha_T \cdot(m+1)^{d\alpha_{T-1}-1}}\;(\deg W)^{\alpha_{T-1}\cdot\frac{(m+1)^{d\alpha_{T-1}}-1}{m}}.$$
\end{thm}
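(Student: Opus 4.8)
The plan is to run the algorithm sketched just before the statement while bookkeeping degrees and dimensions, and then to solve the resulting recursion. By Lemma~\ref{irrb} it suffices to treat the case where $V$ is irreducible of dimension $d$ (the reducible case follows by summing over components, using that $\sum_i x_i^A\le(\sum_i x_i)^A$ for reals $x_i\ge 1$ and $A\ge 1$ and that the exponents in the asserted bound are nondecreasing in $\dim V$); and the case $d=0$ is immediate from $|Z|\le|V|\le\deg V$, so I assume $d\ge 1$. Regard $B_T(V)$ as a subvariety of $B_1(B_{T-1}(V))$ via the identification of $\delta_k\delta^\xi$ with $\delta^\xi\delta_k$ (this uses commutativity of the derivations), so that $B_T(V)$ and $B_{T-1}(W)$ both sit inside $K^{n(m+1)\alpha_{T-1}}$, and let $\pi\colon B_1(B_{T-1}(V))\to B_{T-1}(V)$ and $\rho\colon B_{T-1}(V)\to V$ be the canonical projections. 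Put $X_0:=\overline{\pi(B_T(V)\cap B_{T-1}(W))}$, and, given $X_k\subseteq B_{T-1}(V)$, stop if $\rho(X_k)$ is finite and otherwise set $X_{k+1}:=\overline{\pi(B_{T-1}(W)\cap B_1(X_k))}$.

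First I would record the invariant $Z\subseteq\rho(X_k)$ for all $k$. If $v\in Z$ then $\nabla_{T-1}(v)\in B_{T-1}(V)$ and $\nabla_T(v)\in B_T(V)$; moreover $\nabla(v)\in W$ gives $\nabla_{T-1}(\nabla(v))\in B_{T-1}(W)$, and under the above identification $\nabla_{T-1}(\nabla(v))=\nabla_1(\nabla_{T-1}(v))=\nabla_T(v)$, so $\nabla_T(v)\in B_T(V)\cap B_{T-1}(W)$ and $\nabla_{T-1}(v)=\pi(\nabla_T(v))\in X_0$. Replacing $B_T(V)$ by $B_1(X_k)$ and using $\nabla_{T-1}(v)\in X_k$ inductively gives $\nabla_{T-1}(v)\in X_{k+1}$. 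Hence, once the algorithm halts at some stage $N$, the set $\rho(X_N)$ is finite and $|Z|\le|\rho(X_N)|\le\deg X_N$ by Fact~\ref{Bezout}.

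The crux is to bound $N$, and here Proposition~\ref{goodfact} enters. Call a component $X'$ of $X_k$ \emph{bad} if $\rho(X')$ is infinite (so $\dim X'\ge 1$) and \emph{good} otherwise. Every component of $X_{k+1}$ is contained in $\pi(B_{T-1}(W)\cap B_1(X'))$ for a unique component $X'$ of $X_k$, hence has dimension $\le\dim X'$; and if $X'$ is good then its descendants are good, since $\pi(B_{T-1}(W)\cap B_1(X'))\subseteq\pi(B_1(X'))=X'$. If $X'$ is bad, one verifies that $B_{T-1}(W)\cap B_1(X')\subseteq B_T(V)$ (this uses $X'\subseteq B_{T-1}(V)$ and commutativity, exactly as in the proof of Proposition~\ref{goodfact}); were this set to project dominantly onto $X'$, then Proposition~\ref{goodfact} applied with $V'=Z$ — a finite set, hence not containing the infinite set $\rho(X')$ — would produce $v\in V\setminus Z$ with $\nabla(v)\in W$, i.e.\ $v\in Z$, absurd. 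So the projection is not dominant and every descendant of a bad $X'$ has dimension $<\dim X'$. A parallel application of the differential-condition machinery of Section~\ref{bur} shows that $X_0$ has no component of dimension $d\alpha_{T-1}=\dim B_{T-1}(V)$: such a component would be all of $B_{T-1}(V)$, which would force a differential generic point of $V$ into the finite set $Z$, impossible since $d\ge 1$. Therefore each bad component of $X_0$ has dimension at most $d\alpha_{T-1}-1$, its dimension strictly drops at every stage until it becomes good, and the algorithm halts with $N\le d\alpha_{T-1}-1$.

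It remains to bookkeep degrees. By inequality~(\ref{degbound}), Bezout's inequality, and the fact that projections do not raise degree (Fact~\ref{Bezout}), $\deg X_0\le(\deg V)^{\alpha_T}(\deg W)^{\alpha_{T-1}}$ and, for each $k$, $\deg X_{k+1}\le(\deg W)^{\alpha_{T-1}}(\deg X_k)^{\alpha_1}$ with $\alpha_1=m+1$. Writing $q=m+1$ and unrolling this recursion $N$ times,
$$\deg X_N\le(\deg W)^{\alpha_{T-1}\frac{q^{N}-1}{q-1}}\Big((\deg V)^{\alpha_T}(\deg W)^{\alpha_{T-1}}\Big)^{q^{N}}=(\deg V)^{\alpha_T q^{N}}(\deg W)^{\alpha_{T-1}\frac{q^{N+1}-1}{q-1}};$$
substituting $N=d\alpha_{T-1}-1$ and $q-1=m$ turns the exponents into $\alpha_T(m+1)^{d\alpha_{T-1}-1}$ and $\alpha_{T-1}\frac{(m+1)^{d\alpha_{T-1}}-1}{m}$ respectively, and combined with $|Z|\le\deg X_N$ this is precisely the asserted inequality. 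The main obstacle is the structural bookkeeping in the third paragraph: the coordinate identification making $B_T(V)\subseteq B_1(B_{T-1}(V))$ interact correctly with $B_{T-1}(W)$, the verification that $B_{T-1}(W)\cap B_1(X')\subseteq B_T(V)$ so that Proposition~\ref{goodfact} genuinely applies to bad components, and the dimension count that pins $N$ at $d\alpha_{T-1}-1$ rather than $d\alpha_{T-1}$.
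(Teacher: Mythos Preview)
Your proposal is correct and follows essentially the same route as the paper: reduce to irreducible $V$, run the recursion $X_{s+1}=\overline{\pi(B_{T-1}(W)\cap B_1(X_s))}$, use Proposition~\ref{goodfact} to force the dimension of the ``bad'' components to drop strictly (hence termination by stage $d\alpha_{T-1}-1$), and read off the degree bound from the recursion $\deg X_{s+1}\le(\deg W)^{\alpha_{T-1}}(\deg X_s)^{m+1}$. Two cosmetic points: the word ``unique'' in ``for a unique component $X'$ of $X_k$'' is not quite right (a component of $X_{k+1}$ could lie in the intersection of two components of $X_k$) but you only need ``some,'' and the containment $B_{T-1}(W)\cap B_1(X')\subseteq B_T(V)$ you flag is in fact not used in the proof of Proposition~\ref{goodfact} (nor does the paper verify it in its own Claim~1), so you need not worry about justifying it.
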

\begin{proof}
First we show that if the statement holds for all irreducible $V$ of dimension $\leq d$, then it holds for arbitrary $V$ of dimension $d$. So, assume the statement holds for all irreducible varieties of dimension $\leq d$, and suppose $V$ has decomposition $V_1\cup\dots\cup V_s$ and dimension $d$. Let $d_i=\dim V_i$ and $Z_i=V_i\cap Z$. Then $Z_i=\{v\in V_i: \nabla(v)\in W\}$, and so, since $V_i$ is irreducible, our assumption implies
$$|Z_i|\leq (\deg V_i)^{\alpha_T \cdot(m+1)^{d_i\alpha_{T-1}-1}}\;(\deg W)^{\alpha_{T-1}\cdot\frac{(m+1)^{d_i\alpha_{T-1}}-1}{m}}.$$
Using that $Z=Z_1\cup\dots\cup Z_s$, we get
\begin{equation}\label{subit1}
|Z|\leq \sum_{i=1}^s |Z_i| \leq \sum_{i=1}^s (\deg V_i)^{\alpha_T \cdot(m+1)^{d_i\alpha_{T-1}-1}}\;(\deg W)^{\alpha_{T-1}\cdot\frac{(m+1)^{d_i\alpha_{T-1}}-1}{m}}.
\end{equation}
Recalling that by definition $\deg V=\sum_{i=1}^s \deg V_i$, we have
$$\sum_{i=1}^s (\deg V_i)^{\alpha_T \cdot(m+1)^{d\alpha_{T-1}-1}}\leq \left(\sum_{i=1}^s \deg V_i\right)^{\alpha_T \cdot(m+1)^{d\alpha_{T-1}-1}}=(\deg V)^{\alpha_T \cdot(m+1)^{d\alpha_{T-1}-1}}.$$
Using this inequality, together with (\ref{subit1}) and the fact that $d_i\leq d$, we get
$$|Z|\leq (\deg V)^{\alpha_T \cdot(m+1)^{d\alpha_{T-1}-1}}\;(\deg W)^{\alpha_{T-1}\cdot\frac{(m+1)^{d\alpha_{T-1}}-1}{m}},$$
as desired. Thus, it suffices to prove the statement under the assumption that $V$ is irreducible. 

If $d=0$ we have that $|Z|\leq |V|= \deg V\leq (\deg V)^{\alpha_T (m+1)^{-1}}$, and so the upper bound holds (note that $\alpha_T(m+1)^{-1}\geq 1$ ). We now assume $d\geq 1$. 

Let $X_0$ be the Zariski closure of $\pi(B_T(V)\cap B_{T-1}(W))$ where $\pi:B_1(B_{T-1}(V))\to B_{T-1}(V)$ is the canonial projection. Define recursively
$$X_s:=\text{ Zariski-closure of }\pi(B_{T-1}(W)\cap B_1(X_{s-1}))$$
for $s\geq 1$. It is easy to check, by induction on $s$ and using Fact~\ref{Bezout}, that $\nabla_{T-1}(Z)\subseteq X_s$ and 
\begin{equation}\label{onx}
\deg (X_s)\leq (\deg V)^{\alpha_T(m+1)^s}\;(\deg W)^{\alpha_{T-1}\sum_{i=0}^s(m+1)^i}.
\end{equation}
Let us explain how the induction step is performed to prove (\ref{onx}): Assume the inequality is true for $s-1$. Using Fact~\ref{Bezout} and the definition of $X_s$ we get $\deg X_s\leq \deg B_{T-1}(W)\cdot \deg B_1(X_{s-1})$. Recall from \S2 that $\deg B_{T-1}(W)\leq (\deg W)^{\alpha_{T-1}}$ and $\deg B_1(X_{s-1})\leq (\deg X_{s-1})^{m+1}$. These inequalities (and the inductive hypothesis) imply
\begin{align*}
\deg X_s\leq & \; (\deg W)^{\alpha_{T-1}}\; (\deg V)^{(m+1)\alpha_T(m+1)^{s-1}}\; (\deg W)^{(m+1)\alpha_{T-1}\sum_{i=0}^{s-1}(m+1)^i} \\
= & \; (\deg V)^{\alpha_T(m+1)^s}\;(\deg W)^{\alpha_{T-1}\sum_{i=0}^s(m+1)^i}.
\end{align*}
Thus we obtain (\ref{onx}).

We now note that 
\begin{equation}\label{ondim}
\dim X_0<d\alpha_{T-1}
\end{equation}
Indeed, if this were not the case, we would have that $X_0=B_{T-1}(V)$ (here one uses that fact that, since $V$ is irreducible, $B_{T-1}(V)$ is irreducible) and so Proposition \ref{goodfact} would imply that $|Z|$ is infinite.

\vspace{.05in}
\noindent {\bf Claim 1.} There is $s\leq d\alpha_{T-1}-1$ such that the projection $X_s\to V$ is a finite set.

\vspace{.03in}
\noindent {\it Proof of Claim.} Towards a contradiction suppose there is no such an $s$. Then, for each $s< d\alpha_{T-1}-1$, the projections of $X_s$ and $X_{s+1}$ in $V$ are infinite sets. Let $\hat X_s$ be the union of all the irreducible components of $X_s$ whose projection in $V$ is infinite. Similarly for $\hat X_{s+1}$. We now claim that $\dim \hat X_{s+1}< \dim \hat X_s$. Suppose this is not the case. Then there is an irreducible component $Y$ of $X_{s+1}$ whose projection in $V$ is infinite and such that $\dim Y= \dim \hat X_s$. Since $X_{s+1}\subseteq X_s$, we have that $Y$ is an irreducible component of $X_s$. Since $B_{T-1}(W)\cap B_1(X_s)$ projects dominantly onto $X_{s+1}$ and $Y$ is an irreducible component of $X_{s+1}$, $B_{T-1}(W)\cap B_1(X_s)\cap \pi^{-1}(Y)$ projects dominantly onto $Y$. By Lemma \ref{irrb}, $B_1(Y)$ is an irreducible component of $B_1(X_s)$, and so there is a nonempty open set $U$ of $Y$ such that $B_1(X)\cap \pi^{-1}(U)=B_1(Y)\cap\pi^{-1}(U)$ ($U$ can be obtained by removing from $Y$ all the other components of $X_s$). Hence, $B_{T-1}(W)\cap B_1(Y)$ projects dominantly onto $Y$. Then, by Proposition \ref{goodfact}, $|Z|$ would be infinite. Hence, no such $Y$ can exist. 

The above discussion shows that $\dim (\hat X_s) \leq \dim (\hat X_0) -s$ for all $s\leq d\alpha_{T-1}$. Hence, 
$$\dim (\hat X_{d\alpha_{T-1}-1} )\leq \dim (\hat X_0 )- d\alpha_{T-1}+1\leq 0,$$
where the last inequality uses (\ref{ondim}) (and $\dim (\hat X_0) \leq \dim (X_0)$). This shows that the irreducible components of $X_{d\alpha_{T-1}-1}$ whose projection in $V$ is infinite have dimension zero. But this is impossible and so we have proven the claim.

Let $s$ be as in Claim 1. Then the projection $X_s\to V$ is a finite set. Since $\nabla_{T-1}(Z)\subseteq X_s$, Fact \ref{Bezout} yields $|Z|\leq \deg X_s$. Using (\ref{onx}) and $s\leq d\alpha_{T-1}-1$ we get
\begin{equation}\label{sub}
|Z|\leq \deg X_s \leq (\deg V)^{\alpha_T (m+1)^{d\alpha_{T-1}-1}}\;(\deg W)^{\alpha_{T-1} \sum_{i=0}^{d\alpha_{T-1}-1}(m+1)^i}.
\end{equation}
Recall that for any integer $a>1$ we have $\sum_{i=0}^k a^i=\frac{a^{k+1}-1}{a-1}$, and so $$\sum_{i=0}^{d\alpha_{T-1}-1}(m+1)^i=\frac{(m+1)^{d\alpha_{T-1}}-1}{m}.$$ 
The result follows by subbing this in the right hand side of (\ref{sub}).
\end{proof}

\begin{rem}\label{imprem} \
\begin{enumerate}
\item The proof actually yields an upper bound for the degree of the Zariski closure of $Z$, even when $Z$ is not finite. In fact, a slightly more detailed conclusion is given in Corollary \ref{posdim} below. 
\item We can now give an upper bound in terms of the degrees of the $p_1,\ldots,p_r$ and the $q_1,\ldots,q_s$. Suppose their degree is bounded by $D$. Then, by Bezout's inequality, $\deg V\leq D^{r}$ and $\deg W\leq D^s$. Since 
$$\alpha_{T-1}\cdot\frac{(m+1)^{d\alpha_{T-1}}-1}{m}\leq \alpha_T\cdot (m+1)^{d\alpha_{T-1}},$$ 
using the upper bound and that $\dim (V) \leq n$, we have
$$|Z|\leq D^{(r+s)\alpha_T(m+1)^{n\alpha_{T-1}}}.$$
\item If $S$ is a closed subvariety of $K^n$ and $Z'=\{v\in V\setminus S: \nabla(v)\in W\}$ is finite, then the above proof shows that the same upper bound holds for $|Z'|$ (i.e., the bound is independent of $S$). One could carry out a similar analysis in the case when $S$ is a subvariety of $K^{(m+1)n}$ and the Kolchin closed set is of the form $\{v\in V:\nabla(v)\in W\setminus S\}$; however, the arguments given here will not yield an upper bound independent of $S$ and so we would need a different strategy (for instance, this would require a small modification of Proposition \ref{goodfact} and an appropriate new algorithm). We do not explore such analysis here, as the case of $S\subseteq K^n$ is sufficient for our purposes (and for the possible applications that we pointed out in the introduction).
\end{enumerate}
\end{rem}

\begin{corr} \label{posdim} 
Let $V$ and $S$ be subvarieties of $K^n$ and $W$ a subvariety of $K^{(m+1)n}$. Let $d=\dim (V)$. If the components of the Zariski closure $\bar Z$ of $Z=\{v\in V\setminus S: \nabla(v)\in W\}$ have dimension greater than or equal to $d_0$, then 
$$\deg \bar Z \leq (\deg V)^{\alpha_T (m+1)^{(d-d_0)\alpha_{T-1}-1}}\;(\deg W)^{\alpha_{T-1}\cdot\frac{(m+1)^{(d-d_0)\alpha_{T-1}}-1}{m}}.$$
\end{corr}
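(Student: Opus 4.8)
The plan is to reduce, by slicing with $d_0$ generic hyperplanes, to the degree‑bound form of Theorem \ref{maint} in ambient dimension $d-d_0$. First I would dispose of the degenerate cases. If $Z=\emptyset$ there is nothing to prove, and if $\bar Z$ has a component of dimension $d$ then (as $\bar Z\subseteq V$) that component is a top‑dimensional component of $V$, so $\deg\bar Z\le\deg V$, which already gives the bound since $\alpha_T(m+1)^{(d-d_0)\alpha_{T-1}-1}\ge\alpha_T/(m+1)\ge 1$; in particular this settles the case $d=d_0$, just as the case $d=0$ was settled in Theorem \ref{maint}. So assume $d>d_0$, $V\not\subseteq S$, and that no component of $\bar Z$ has dimension $d$.

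Now fix hyperplanes $H_1,\dots,H_{d_0}\subseteq K^n$ whose coefficients are algebraically independent over $K$ (one may pass to a differentially closed field containing $K$ and these coefficients, as the degree bounds are insensitive to the base field), and put $V'=V\cap H_1\cap\cdots\cap H_{d_0}$, $S'=S\cap H_1\cap\cdots\cap H_{d_0}$, and $Z'=Z\cap H_1\cap\cdots\cap H_{d_0}=\{v\in V'\setminus S':\nabla(v)\in W\}$. For generic coefficients one has $\dim V'=d-d_0$, $\deg V'\le\deg V$ by Bezout's inequality, and — since every component of $\bar Z$ has dimension $\ge d_0$ and a generic codimension‑$d_0$ linear section of an irreducible variety of dimension $\ge d_0$ is irreducible, resp.\ a reduced $0$‑cycle, of the \emph{same} degree — one has $\deg(\bar Z\cap H_1\cap\cdots\cap H_{d_0})=\deg\bar Z$. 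The crucial point, which I expect to be the main obstacle, is the identity $\overline{Z'}=\bar Z\cap H_1\cap\cdots\cap H_{d_0}$: the inclusion $\subseteq$ is formal, but the reverse asks that the Kolchin‑constructible set $Z$, which is Zariski‑dense in $\bar Z$, remain Zariski‑dense after a generic linear slicing, and this is \emph{not} a formal consequence of density. I would prove it component by component. Writing $\overline{Z}^{\mathrm{Kol}}=D_1\cup\cdots\cup D_q$ for the Kolchin‑irreducible decomposition, the irreducible components of $\bar Z$ are the maximal members of $\{\overline{D_j}\}$, each $Z\cap D_j$ is Kolchin‑dense in $D_j$, and $D_j\not\subseteq S$; a differential generic point of $D_j$ then has algebraic locus $\overline{D_j}$ over $K$, of transcendence degree $\ge d_0$, and a transcendence‑degree count shows that intersecting $D_j\setminus S$ with $d_0$ generic constant‑coefficient hyperplanes leaves a set whose Zariski closure is $\overline{D_j}\cap H_1\cap\cdots\cap H_{d_0}$. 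Summing over $j$ gives the identity, whence $\deg\overline{Z'}=\deg\bar Z$.

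It remains to bound $\deg\overline{Z'}$ for the triple $(V',W,S')$, of ambient dimension $d-d_0$; this is exactly the degree estimate of Remark \ref{imprem}~(1), and the proof of Theorem \ref{maint} applies with one modification to the termination step. Reducing to $V'$ irreducible by summing over components as in Theorem \ref{maint}, and assuming $\overline{Z'}\subsetneq V'$ (which holds here, since no component of $\bar Z$ has dimension $d$, so $\dim\overline{Z'}\le d-1-d_0<d-d_0$; the case $\overline{Z'}=V'$ being trivial), one runs the algorithm $X_0=\overline{\pi(B_T(V')\cap B_{T-1}(W))}$, $X_{s+1}=\overline{\pi(B_{T-1}(W)\cap B_1(X_s))}$. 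The inclusion $\nabla_{T-1}(Z')\subseteq X_s$, the degree estimate as in (\ref{onx}), and the dimension bound $\dim X_0<(d-d_0)\alpha_{T-1}$ as in (\ref{ondim}) (using Proposition \ref{goodfact}) all go through verbatim. The modification: let $\hat X_s$ be the union of the irreducible components $Y$ of $X_s$ with $\pi(Y)\not\subseteq\overline{Z'}\cup S'$; if such a $Y$ were also a component of $X_{s+1}$, then $B_{T-1}(W)\cap B_1(Y)$ would project dominantly onto $Y$, and Proposition \ref{goodfact}, applied with the forbidden subvariety $S'\cap V'$ (which does not contain $\pi(Y)$), would yield points of $Z'$ Zariski‑dense in $\pi(Y)$, forcing $\pi(Y)\subseteq\overline{Z'}$, a contradiction; hence $\dim\hat X_{s+1}<\dim\hat X_s$ whenever $\hat X_{s+1}\neq\emptyset$. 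Unlike in Theorem \ref{maint}, a zero‑dimensional $\hat X_s$ need not be empty — it may contain isolated points off $\overline{Z'}\cup S'$ — but this is harmless. Since $\dim\hat X_0\le\dim X_0\le(d-d_0)\alpha_{T-1}-1$, for $s=(d-d_0)\alpha_{T-1}-1$ we get $\dim\hat X_s\le 0$; then a short three‑way case check against the definition of $\hat X_s$ shows that every component of $\overline{Z'}$ is a component of $\overline{\pi_{T-1}(X_s)}$ (it lies in $\overline{\pi_{T-1}(X_s)}$ because $Z'\subseteq\pi_{T-1}(X_s)$, and cannot be absorbed by a larger component, whose corresponding component of $X_s$ would have to project into $\overline{Z'}$, or into $S'$, or lie in $\hat X_s$ hence have dimension $\le 0$ — all impossible), so $\deg\overline{Z'}\le\deg\overline{\pi_{T-1}(X_s)}\le\deg X_s$. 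Substituting $s=(d-d_0)\alpha_{T-1}-1$ into the degree estimate, using $\sum_{i=0}^{k}(m+1)^i=\frac{(m+1)^{k+1}-1}{m}$, and combining with $\deg\overline{Z'}=\deg\bar Z$ and $\deg V'\le\deg V$, gives the asserted bound.
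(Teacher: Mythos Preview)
Your slicing strategy has a genuine gap at precisely the step you flag as the main obstacle: the identity $\overline{Z'}=\bar Z\cap H_1\cap\cdots\cap H_{d_0}$ can fail, and the ``transcendence-degree count'' you invoke does not establish it. The difficulty is that a Kolchin-irreducible set $D_j$ can be Zariski-dense in a variety of dimension exactly $d_0$ and yet meet $d_0$ generic hyperplanes in the empty set. Concretely, take $m=1$, $n=3$, $V=K^3$, $S=\emptyset$, and let $W\subseteq K^6$ be cut out by $z_1=y_2$, $z_2=0$, $y_3=0$ in coordinates $(y_1,y_2,y_3,z_1,z_2,z_3)$. Then $Z=\{(x_1,x_2,0):\delta x_1=x_2,\ \delta x_2=0\}$ is Kolchin-irreducible with $\bar Z=K^2\times\{0\}$, so $d=3$ and we may take $d_0=2$. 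If $H_1,H_2$ have coefficients that are new constants algebraically independent over $K$, then $\bar Z\cap H_1\cap H_2$ is the single point $(p_1,p_2,0)$ with $p_1,p_2$ rational in those constants, hence themselves constants; but membership in $Z$ requires $\delta p_1=p_2$, i.e.\ $0=p_2$, which fails generically. Thus $Z'=\emptyset$ while $\bar Z\cap H_1\cap H_2$ is a point, and $\deg\overline{Z'}=0\neq 1=\deg\bar Z$. (Dropping the word ``constant'' only makes matters worse: already $C\times\{0\}\subseteq K^2$ misses a single hyperplane with differentially generic coefficients entirely.) The underlying issue is that the algebraic transcendence degree of a differential generic point of $D_j$ governs $\dim\overline{D_j}$, but says nothing about whether points of $D_j$ survive $d_0$ algebraic constraints: the differential type of $D_j$ can be $0$ even when $\dim\overline{D_j}=d_0$, which is exactly the situation above.

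The paper sidesteps this by not slicing at all. It runs the algorithm of Theorem~\ref{maint} directly on $V$, but for each component $C_i$ of $\bar Z$ it tracks a chain $Y_{i,0}\supseteq Y_{i,1}\supseteq\cdots$ of irreducible components of the successive $X_s$ whose projection to $V$ still contains $C_i$. Proposition~\ref{goodfact} forces $\dim Y_{i,s}$ to drop strictly as long as the projection of $Y_{i,s}$ is strictly larger than $C_i$; on the other hand, since $Y_{i,s}$ always dominates $C_i$ and $\dim C_i\ge d_0$, one gets $\dim Y_{i,s}\ge d_0\alpha_{T-1}$ (via Lemma~\ref{goodcomp}), and together with $\dim Y_{i,0}\le\dim X_0<d\alpha_{T-1}$ this caps the number of steps at $(d-d_0)\alpha_{T-1}-1$. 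Your modified $\hat X_s$ argument in the second half is close in spirit and could be made to work if applied directly to $V$ with the stopping condition $\overline{\pi(Y)}\subseteq\bar Z\cup S$ and with this lower bound on dimensions built in; it is the hyperplane reduction in the first half that cannot be repaired.
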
 
\begin{proof}
First we show that if the statement holds for all irreducible $V$ of dimension $\leq d$, then it holds for arbitrary $V$ of dimension $d$. So, assume the statement holds for all irreducible varieties of dimension $\leq d$, and suppose $V$ has decomposition $V_1\cup\dots\cup V_s$ and dimension $d$. Let $d_i=\dim V_i$ and $Z_i=V_i\cap Z$. Then $Z_i=\{v\in V_i\setminus S: \nabla(v)\in W\}$, and the irreducible components of $\bar Z_i$ (the Zariski closure of $Z_i$) are irreducible components of $\bar Z$. Thus, the components of $\bar Z_i$ have dimension greater than or equal to $d_0$, and so, since $V_i$ is irreducible, our assumption implies
$$\deg \bar Z_i \leq (\deg V_i)^{\alpha_T \cdot(m+1)^{(d_i-d_0)\alpha_{T-1}-1}}\;(\deg W)^{\alpha_{T-1}\cdot\frac{(m+1)^{(d_i-d_0)\alpha_{T-1}}-1}{m}}.$$
Using that $\bar Z=\bar Z_1\cup\dots\cup \bar Z_s$ and the fact that the components of $\bar Z_i$ are components of $\bar Z$, we have that $\deg \bar Z\leq \sum_{i=1}^s\deg \bar Z_i$. Putting this together with the above equation, we get
\begin{equation}\label{subit2}
\deg \bar Z  \leq \sum_{i=1}^s (\deg V_i)^{\alpha_T \cdot(m+1)^{(d_i-d_0)\alpha_{T-1}-1}}\;(\deg W)^{\alpha_{T-1}\cdot\frac{(m+1)^{(d_i-d_0)\alpha_{T-1}}-1}{m}}.
\end{equation}
Recalling that by definition $\deg V=\sum_{i=1}^s \deg V_i$, we have
\begin{align*}
\sum_{i=1}^s (\deg V_i)^{\alpha_T \cdot(m+1)^{(d-d_0)\alpha_{T-1}-1}} 
& \leq \left(\sum_{i=1}^s \deg V_i\right)^{\alpha_T \cdot(m+1)^{(d-d_0)\alpha_{T-1}-1}} \\
&=(\deg V)^{\alpha_T \cdot(m+1)^{(d-d_0)\alpha_{T-1}-1}}.
\end{align*}
Using this inequality, together with (\ref{subit2}) and the fact that $(d_i-d_0)\leq (d-d_0)$, we get
$$\deg \bar Z \leq (\deg V)^{\alpha_T \cdot(m+1)^{(d-d_0)\alpha_{T-1}-1}}\;(\deg W)^{\alpha_{T-1}\cdot\frac{(m+1)^{(d-d_0)\alpha_{T-1}}-1}{m}},$$
as desired. Thus, it suffices to prove the statement under the assumption that $V$ is irreducible. 

If $d=\dim (\bar Z)$, then, by irreducibility of $V$, we would have $\bar Z=V$, and so $\deg \bar Z= \deg V\leq (\deg V)^{\alpha_T (m+1)^{-1}}$. Thus the upper bound holds in this case (note that $\alpha_T(m+1)^{-1}\geq 1$). We now consider the remaining case $d>\dim (\bar Z)$.

Let $C_1,\dots, C_t$ be the irreducible components of $\bar Z$. For each $1\leq i\leq t$, let $Y_{i,0}$ be a component of the Zariski closure of $\pi(B_T(V)\cap B_{T-1}(W))$ such that the Zariski closure of the projection of $Y_{i,0}$ in $V$ contains $C_i$, where $\pi:B_1(B_{T-1}(V))\to B_{T-1}(V)$ is the canonial projection. We let 
$$X_0=Y_{1,0}\cup\cdots \cup Y_{t,0}.$$
Define recursively, for $s\geq 1$, $Y_{i,s}$ as a component of the Zariski closure of $\pi(B_{T-1}(W)\cap B_1(X_{s-1}))$ contained in $Y_{i,s-1}$ such that the Zariski closure of the projection of $Y_{i,s}$ in $V$ contains $C_i$. We let 
$$X_s=Y_{1,s}\cup\cdots\cup Y_{t,s}.$$
Induction on $s$ shows that $\nabla_{T-1}(Z)\subseteq X_s$ for all $s\geq 0$. Also, as we did in the proof of Theorem \ref{maint}, an induction on $s$ and using Fact~\ref{Bezout} yields
\begin{equation}\label{onx2}
\deg (X_s)\leq (\deg V)^{\alpha_T(m+1)^s}\;(\deg W)^{\alpha_{T-1}\sum_{i=0}^s(m+1)^i}.
\end{equation}

We also have that 
\begin{equation}\label{ondim2}
\dim (X_0)<d\alpha_{T-1}
\end{equation}
Indeed, if this were not the case, we would have that $X_0=B_{T-1}(V)$ (here one uses that fact that, since $V$ is irreducible, $B_{T-1}(V)$ is irreducible), and so Proposition \ref{goodfact} would imply that $\dim (\bar Z)=d$, which is impossible as we are assuming $d>\dim(\bar Z)$.

 \vspace{.05in}
\noindent {\bf Claim 1'.} There is $s\leq (d-d_0)\alpha_{T-1}-1$ such that the Zariski closure of the projection $X_s\to V$ equals $\bar Z$.

\vspace{.03in}
\noindent {\it Proof of Claim.} Towards a contradiction suppose there is no such an $s$. Then, for each $s\leq (d-d_0)\alpha_{T-1}-1$, the Zariski closure of the projection of $X_s$ in $V$ properly contains $\bar Z$; in other words, there is $Y_{i,s}$ such that $\dim (Y_{i,s})> \dim (\bar Z_i)$. We now claim that for each such $i$, $\dim (Y_{i,s+1})<\dim (Y_{i,s})$. Suppose this is not the case, i.e., $\dim(Y_{i,s+1})=\dim (Y_{i,s})$. By irreducibility, we get $Y_{i,s+1}=Y_{i,s}$. An argument like the one used in Claim 1 of Theorem \ref{maint} shows that $B_{T-1}(W) \cap B_1(Y_{i,s+1})$ projects dominantly onto $Y_{i,s+1}$. Now Proposition \ref{goodfact} would imply that $\dim (Y_{i,s})=\dim (Y_{i,s+1})=\dim (\bar Z_i)$, which is a contradiction. Hence, we must have $\dim (Y_{i,s+1})< \dim(Y_{i,s})$.

The above discussion shows that, for some $1\leq i\leq t$, $\dim (Y_{i,s}) \leq \dim (Y_{i,0}) -s$ for all $s\leq (d-d_0)\alpha_{T-1}$. Hence, 
$$\dim (Y_{i,(d-d_0)\alpha_{T-1}-1}) \leq \dim (Y_{i,0}) - (d-d_0)\alpha_{T-1}+1\leq d_0\alpha_{T-1},$$
where the last inequality uses (\ref{ondim2}) (and $\dim (Y_{i,0}) \leq \dim (X_0)$). Now, since the projection of $Y_{i,(d-d_0)\alpha_{T-1}}$ in $V$ contains $\bar Z_i$ and $\dim (\bar Z_i) \geq d_0$, we have that $\dim (Y_{i,(d-d_0)\alpha_{T-1}})\geq d_0\alpha_{T-1}$ (see Lemma \ref{goodcomp}). Thus, we obtain 
$$d_0\alpha_{T-1}\leq \dim (Y_{i,(d-d_0)\alpha_{T-1}}) \leq \dim (Y_{i,(d-d_0)\alpha_{T-1}-1}) \leq d_0\alpha_{T-1}.$$
Consequently, $\dim (Y_{i,(d-d_0)\alpha_{T-1}-1})=\dim (Y_{i,(d-d_0)\alpha_{T-1}})$ which is impossible by the above observations. We have reached the desired contradiction and thus we have proven the claim.

Let $s$ be as in Claim 1'. Then the Zariski closure of the projection $X_s\to V$ equals $\bar Z$. By Fact \ref{Bezout}, $\deg \bar Z\leq \deg X_s$. The result follows by using (\ref{onx2}), the fact that $s\leq (d-d_0)\alpha_{T-1}-1$, and the same identity used at the end of the proof of Theorem~\ref{maint}.
\end{proof}

We now extend our bound to systems of higher order differential polynomial equations.

\begin{corr}\label{finalcor}
Let $\ell\in \m N$. Suppose $V$ and $S$ are closed subvarieties of $K^n$ and $W$ a subvariety of $K^{n\cdot\alpha_\ell}$. Let $d=\dim V$. If $Z=\{v\in V\setminus S: \nabla_\ell(v)\in W\}$ is finite, then 
$$|Z|\leq (\deg V)^{\alpha_{\ell-1}\alpha_{T'} (m+1)^{d'\alpha_{T'-1}-1}}\;(\deg W)^{\alpha_{T'-1}\cdot\frac{(m+1)^{d'\alpha_{T'-1}}-1}{m}},$$
where $d'=d\cdot\alpha_{\ell-1}$ and $T'=T_1^{m,n\cdot\alpha_{\ell-1}}$ (the latter is defined in (\ref{defT})).
\end{corr}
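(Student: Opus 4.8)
The plan is to reduce the order-$\ell$ statement to the first-order case already treated in Theorem \ref{maint} (in the version with a removed subvariety, Remark \ref{imprem}~(3)) by passing to the prolongation space $B_{\ell-1}(V)$. Set $N=n\cdot\alpha_{\ell-1}$ and $\tilde V=B_{\ell-1}(V)\subseteq K^N$. By Lemmas \ref{goodcomp} and \ref{irrb} we have $\dim\tilde V=\alpha_{\ell-1}\dim V=d'$, and by inequality (\ref{degbound}) we have $\deg\tilde V\leq(\deg V)^{\alpha_{\ell-1}}$. The constant attached to a first-order problem in $K^N$ by the conventions of Section \ref{count} is $T_1^{m,N}=T_1^{m,n\alpha_{\ell-1}}=T'$, as in the statement.

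Next I would rewrite the condition defining $Z$ as a first-order condition on $\nabla_{\ell-1}(v)$. Use coordinates $z=(z_i^\xi)_{i<n,\,\xi\in\Gamma(\ell-1)}$ on $K^N$, and on $K^{N(m+1)}$ the coordinates $(z_i^\xi)$ together with $(w_{i,k}^\xi)$, where $w_{i,k}^\xi$ occupies the slot that $\delta_k z_i^\xi$ lands in under $\nabla_1$. Fix for each $\zeta\in\Gamma(\ell)$ with $|\zeta|=\ell$ a decomposition $\zeta=\xi+{\bf k}$ with $|\xi|=\ell-1$; this data determines a projection $\rho:K^{N(m+1)}\to K^{n\alpha_\ell}$ onto a coordinate subspace, whose $(i,\zeta)$-component is $z_i^\zeta$ when $|\zeta|\leq\ell-1$ and $w_{i,k}^\xi$ when $|\zeta|=\ell$. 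Let $L\subseteq K^{N(m+1)}$ be the linear subspace defined by the integrability equations $w_{i,k}^\xi=z_i^{\xi+{\bf k}}$ for all $i$, all $1\leq k\leq m$, and all $\xi$ with $|\xi+{\bf k}|\leq\ell-1$; put $\tilde W:=L\cap\rho^{-1}(W)\subseteq K^{N(m+1)}$ and $\tilde S:=S\times K^{N-n}\subseteq K^N$, so that $z\in\tilde S$ iff $(z_i^{\bf 0})_{i<n}\in S$.

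The key claim is that
$$\tilde Z:=\{z\in\tilde V\setminus\tilde S:\ \nabla_1(z)\in\tilde W\}=\nabla_{\ell-1}(Z).$$
For the inclusion $\supseteq$: if $v\in Z$ then $z:=\nabla_{\ell-1}(v)\in B_{\ell-1}(V)=\tilde V$, and $(z_i^{\bf 0})=v\notin S$; the integrability equations hold because $\delta_k\delta^\xi=\delta^{\xi+{\bf k}}$ (the derivations commute); and $\rho(\nabla_1(z))=\nabla_\ell(v)\in W$, so $\nabla_1(z)\in\tilde W$. For $\subseteq$: if $z\in\tilde Z$, the integrability equations force, by induction on $|\xi|$, that $z_i^\xi=\delta^\xi(z_i^{\bf 0})$ for every $\xi\in\Gamma(\ell-1)$; hence $z=\nabla_{\ell-1}(v)$ with $v:=(z_i^{\bf 0})$. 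Since $z\in B_{\ell-1}(V)$, evaluating the equations $f^{\bf 0}=f$ (for $f$ in the ideal of $V$) at $z$ gives $v\in V$; $z\notin\tilde S$ gives $v\notin S$; and $\rho(\nabla_1(z))=\nabla_\ell(v)\in W$. Thus $v\in Z$ and $z\in\nabla_{\ell-1}(Z)$. Since $\nabla_{\ell-1}$ is injective, $|\tilde Z|=|\nabla_{\ell-1}(Z)|=|Z|$; in particular $\tilde Z$ is finite.

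Finally I would apply Theorem \ref{maint} together with Remark \ref{imprem}~(3) to the data $\tilde V\subseteq K^N$, $\tilde W\subseteq K^{N(m+1)}$, $\tilde S\subseteq K^N$, whose associated constant is $T'$. Using $\dim\tilde V=d'$ this gives
$$|Z|=|\tilde Z|\leq(\deg\tilde V)^{\alpha_{T'}(m+1)^{d'\alpha_{T'-1}-1}}\,(\deg\tilde W)^{\alpha_{T'-1}\cdot\frac{(m+1)^{d'\alpha_{T'-1}}-1}{m}}.$$
It remains to note $\deg\tilde W\leq\deg W$: since $\rho$ is a projection onto a set of distinct coordinates, $\rho^{-1}(W)$ is, after permuting coordinates, $W\times K^{N(m+1)-n\alpha_\ell}$, and taking the product of a variety with an affine space does not change its degree, so $\deg\rho^{-1}(W)=\deg W$; as $L$ is a linear subspace, Bezout's inequality (Fact \ref{Bezout}) gives $\deg\tilde W=\deg(L\cap\rho^{-1}(W))\leq\deg W$. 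Substituting $\deg\tilde V\leq(\deg V)^{\alpha_{\ell-1}}$ and $\deg\tilde W\leq\deg W$ into the displayed inequality yields exactly the asserted bound. The only genuinely delicate point is the key claim: one must check that encoding the integrability equations into $\tilde W$ recovers $Z$ exactly, rather than enlarging it to a possibly infinite set; everything else is degree and dimension bookkeeping together with the cited facts.
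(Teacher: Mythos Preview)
Your proof is correct and follows essentially the same route as the paper: reduce to the first-order case by replacing $V$ with $B_{\ell-1}(V)$, encode $\nabla_\ell(v)\in W$ together with the integrability relations as a first-order condition $\nabla_1(z)\in\tilde W$, check the resulting set is in bijection with $Z$, and then apply Theorem~\ref{maint} with Remark~\ref{imprem}(3) plus the degree bounds $\deg\tilde V\le(\deg V)^{\alpha_{\ell-1}}$ and $\deg\tilde W\le\deg W$. The only cosmetic differences are that the paper takes $S':=B_{\ell-1}(S)$ rather than your $\tilde S=S\times K^{N-n}$ (either works, since both pull back to $S$ under the projection to the first $n$ coordinates), and that you spell out the ``key claim'' and the $\deg\tilde W\le\deg W$ step in more detail than the paper does.
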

\begin{proof}
Let $V':=B_{\ell-1}(V)\subseteq K^{n\cdot\alpha_{\ell-1}}$ and $S':=B_{\ell-1}(S)$. For each $i=1,\dots,m$, let $A_i=\{\xi\in \m N^m: |\xi|=\ell-1\text { and } \xi_j=0\text{ for } j<i\}$. If we define
\begin{align*}
W'=\{(x^{\xi},y^{\xi,i})_{\xi\in\Gamma(\ell-1),1\leq i\leq m}\in K^{n(m+1)\alpha_{\ell-1}}\colon 
& y^{\xi,i}=y^{\zeta,j} \text{ when } \xi+{\bf i}=\zeta+{\bf j},\\
& y^{\xi,i}=x^{\xi+{\bf i}} \text{ for } \xi\in\Gamma(\ell-2), 1\leq i\leq m, \\
&(x^\xi,y^{\zeta_i,i})_{\xi\in\Gamma(\ell-1),1\leq i\leq m,\zeta_i\in A_i}\in W \},\\
\end{align*}
then $Z'=\{v\in V'\setminus S': \nabla(v)\in W'\}$ is in bijection with $Z$ and, by Bezout's inequality, $\deg(W')\leq \deg(W)$. By Theorem \ref{maint} (and Remark \ref{imprem} (3)), we have
$$|Z|=|Z'|\leq (\deg V')^{\alpha_{T'} (m+1)^{d'\alpha_{T'-1}-1}}\;(\deg W')^{\alpha_{T'-1}\cdot\frac{(m+1)^{d'\alpha_{T'-1}}-1}{m}},$$
where $d'=\dim V'=d\cdot\alpha_{\ell-1}$ and $T'=T_1^{m,n\cdot\alpha_{\ell-1}}$. Using that $\deg V'\leq (\deg V)^{\alpha_{\ell-1}}$ and $\deg W'\leq \deg W$, the result follows.
\end{proof}

\begin{rem} \label{secondrem} 
\begin{enumerate}
\item In the case when $m=1$, and so $T'=1$, we get $\alpha_0=1$, $\alpha_1=2$, and $\alpha_{\ell-1}=\ell$. Thus, in this case, the bound reduces to 
$$|Z|\leq (\deg V)^{\ell2^{d\ell}}\; (\deg W)^{2^{d\ell}-1},$$
which is precisely the upper bound found in the ordinary case \cite{HPnfcp}.
\item (Without the finiteness assumption on $Z$) When the components of the Zariski closure $\bar Z$ of $Z$ have dimension greater than or equal to $d_0$, we obtain a better bound for $\deg (\bar Z)$ (analogous to Corollary \ref{posdim}) by simply replacing $d'=d\cdot\alpha_{\ell-1}$ by $d'=(d-d_0)\cdot \alpha_{\ell-1}$. In the ordinary case, this bound reduces to 
$$\deg (\bar Z)\leq (\deg V)^{\ell 2^{(d-d_0)\ell}}\;(\deg W)^{2^{(d-d_0)\ell}-1}.$$
\end{enumerate}
\end{rem}

\end{document}